\theoremstyle{plain}
\newtheorem{thm}{Theorem}[section]
\newtheorem{lemma}[thm]{Lemma}
\newtheorem{cor}[thm]{Corollary}
\newtheorem{conj}[thm]{Conjecture}
\newtheorem{que}[thm]{Question}
\newtheorem{prop}[thm]{Proposition}
\theoremstyle{definition}
\newtheorem{defn}[thm]{Definition}
\newtheorem{remark}[thm]{Remark}
\begin{document}

\title[Locally $G$-homogeneous Busemann $G$-spaces]{Locally $G$-homogeneous Busemann $G$-spaces}
\author[
V. N. Berestovski\u\i,
D. M. Halverson, and
D. Repov\v s
]
{
V. N. Berestovski\u\i,
Denise M. Halverson, and
Du\v san Repov\v s
}

\address{Omsk Branch of Sobolev Institute
of Mathematics SD RAS,
Pevtsova 13, Omsk, Russia 644099}
\email{berestov@ofim.oscsbras.ru}

\address{Department of Mathematics,
Brigham Young University,
Provo, UT 84602}
\email{deniseh@math.byu.edu}

\address{Faculty of Mathematics and Physics, and
Faculty of Education,
University of Ljubljana,
P. O. Box 2964,
Ljubljana, Slovenia 1001}
\email{dusan.repovs@guest.arnes.si}

\subjclass[2010]{
Primary 57N15, 57N75, 53C70;
Secondary 57P99}

\keywords{ANR, 
Busemann conjecture, 
Bing-Borsuk conjecture,
Busemann
$G$-space, 
finite-dimensionality problem, 
invariance of domain,
homology manifold, 
Kosi\'nski $r$-space,
manifold factor,
small metric sphere, 
stable starlikeness, 
stable visibility, 
(strong) topological homogeneity, 
uniform local $G$-homogeneity,
orbal set.}

\begin{abstract}
We present short proofs of all known topological properties of general
Busemann $G$-spaces (at present no other property is known for dimensions 
more than four). We prove that
all small metric spheres in locally $G$-homogeneous Busemann $G$-spaces
are homeomorphic and strongly topologically homogeneous.
This is a key result in the context of the classical Busemann conjecture
concerning the characterization of topological manifolds, which asserts
that every $n$-dimensional Busemann $G$-space is a topological $n$-manifold.
We also prove that every Busemann $G$-space which is uniformly locally
$G$-homogeneous on an orbal subset must be finite-dimensional.
\end{abstract}

\date{\today}
\thanks{The corresponding author is {\it Du\v san Repov\v s}. His email address is {\it dusan.repovs@guest.arnes.si}}
\maketitle

\section{Introduction}

A metric space is said to be a {\it Busemann $G$-space} if it
satisfies four basic axioms that, among other things, imply that
the space is a complete geodesic space (a precise definition will be
given later). This class of spaces was introduced in 1942 by Herbert
Busemann \cite{Busemann1}--\cite{Busemann3} in an attempt to
present Finsler manifolds in simple geometric terms. Subsequent
investigations in \textit{the geometry of geodesics} were summarized
in \cite{Busemann6,Busemann-Phadke3}. Busemann and Phadke
\cite{Busemann-Phadke2} introduced and studied an interesting
generalization of Busemann $G$-spaces. Their survey
\cite{Busemann-Phadke3} can be considered as a  testament
to researchers in the area
of the geometry of geodesics.

In the present paper we give
short proofs of topological properties of general finite-dimensional
Busemann $G$-spaces -- no other property is known at present without
specification of the dimension. A new result among them is that
small spheres in every $n\geq 3$-dimensional Busemann $G$-spaces are
simply connected.

At present, the answer to the Busemann question \cite{Busemann3}, which asks if
every Busemann $G$-space
must be finite-di\-men\-sio\-nal is still unknown.
Heretofore, the best known result 
was that  this is true for every Busemann $G$-space with
small geodesically convex balls near some point
\cite{Berestovskii1}.
The latter condition is satisfied at every point of a Busemann $G$-space $X$ if $X$ has
nonpositive curvature in the Busemann sense \cite{Busemann3,
Busemann4}, which means that in small triangles the length of the
midsegment is no more than the half of the length of the
corresponding side.

Note that nowadays many
authors
apply the term \textit{Busemann space} to a
geodesic space with a  local or global condition of nonpositive
curvature in the Busemann sense \cite{Papadopoulos}. However, there
exist metrically homogeneous Finsler 2-manifolds among the so-called
\textit{quasihyperbolic planes}, which are Busemann $G$-spaces with
no geodesically convex balls of positive radius (the assertion was stated in \cite{Busemann5} and
proved in \cite{Gribanova}). Weaker assertions have been proved in \cite{Busemann-Phadke1}.
 In this paper we shall
 generalize the result from \cite{Berestovskii1} stated above
to all Busemann $G$-spaces which are
uniformly locally $G$-homogeneous on an
orbal subset. It is unknown whether every Busemann $G$-space
satisfies this property.

Busemann conjectured that for all $n < \infty$, every
$n$-dimensional  Busemann $G$-space is a topological $n$-manifold.
The $(n \le 4)$-dimensional Busemann $G$-spaces are known to be
topological $n$-manifolds (cf. \cite{Busemann3,Krakus,Thurston}).
The {\it Busemann conjecture} is also known to be true in all
dimensions 
under the additional hypothesis that the Aleksandrov curvature is
bounded either {\it  from below}  or {\it from above}; such spaces
are even Riemannian (hence Finsler) manifolds with continuous metric
tensors \cite{Berestovskii2,Berestovskii3}. There are other
additional conditions which guarantee that a Busemann $G$-space is a
topological manifold, or even a Finsler space with a continuous
metric function \cite{Busemann6,Pogorelov}. We shall discuss
these results more in details later in the paper. However, this
classical problem, now over half a century old, has still not been
solved in its complete generality. For more on the Busemann  conjecture see
the recent
survey \cite{HaRSurvey}.

A finite-dimensional normed
vector space $(V,||\cdot||)$ is
a Busemann $G$-space if and only if its
closed balls of positive radius are strongly convex in
the affine sense, i.e. they are convex and
their boundary spheres do not contain non-trivial affine segments.
Under this condition, its
shortest arcs are exactly affine segments.
On the other hand,
$V$ has the Aleksandrov curvature bounded from above or below if and
only if $V$ is isometric to the Euclidean space \cite{ABN}. Therefore there exist
Busemann $G$-spaces with geodesically (strongly) convex balls which
do not have the Aleksandrov curvature bounded from above or below.
Note also that every normed vector space $(V,||\cdot||)$ is a
\textit{space with distinguished geodesics} in the sense of
\cite{Busemann-Phadke2}.

Let us observe that we shall, as did Busemann, assume that a
\textit{Finsler manifold} is a finite-dimensional
$C^1$-differentiable manifold $M$ with a continuous norm $F$ on its
tangent bundle $TM$. However, it should be noted that usually the
 Finsler geometry experts, including Finsler \cite{Finsler}
himself,  generally require additional conditions for the function
$F.$ There are no known examples of Busemann $G$-spaces which are
topological manifolds but fail to be Finsler manifolds. However, every
metrically homogeneous Busemann $G$-space is a homogeneous space of
a (connected) Lie group by its compact subgroup, and hence a
topological manifold \cite{Berestovskii4,Szenthe}. It seems that
every such space should be a Finsler manifold.
This
has actually
been proved for dimensions 2 and 3 (cf. \cite{Berestovskii5,Berestovskii6}),
whereas every  one-dimensional Busemann $G$-space is always
a Riemannian (hence Finsler)
manifold.

Pogorelov \cite{Pogorelov} proved  that a Finsler manifold $M$ with
a "strictly convex" metric function $F$ of the class $C^{1,1}$ is a
Busemann $G$-space, and moreover, that this degree of regularity
cannot be weakened. Namely, for any $\alpha<1$ there exist Finsler
manifolds with a strictly convex metric function of the class
$C^{1,\alpha}$ which are not Busemann $G$-spaces. This result
substantially improves upon
an ealier result of Busemann and Mayer
\cite{Busemann-Mayer} -- they proved the first statement above for
$C^3$-functions $F.$ 
Note that  three versions of
"strict convexity" were used
in \cite{Pogorelov}. However, the discussion in the
previous paragraph implies that there are Busemann-Finsler
$G$-spaces $V$ with metric function $F=||\cdot||$ which are not
differentiable and not strictly convex for two of the three versions
of this notion.

Pogorelov also proved in some sense the converse assertion: if in a
Busemann $G$-space the intersecting shortest curves have a certain
slope to each other which continuously depends on these shortest
curves, then such a $G$-space is a Finsler space with a continuous
metric function. Similar results were proved by Busemann
\cite{Busemann6}: if a $G$-space is "continuously differentiable and
regular" at one point then it is a topological manifold (cf. (9) on
p. 24 in \cite{Busemann6}). Busemann stated that regularity
condition can be avoided. In fact, it is more or less clear that if
a G-space is continuously differentiable at every one of its points
then it  is isometric to a Finsler space with a continuous metric
function.

The Busemann conjecture is a special case of another classical
conjecture, the {\it Bing-Borsuk conjecture} \cite{Bing-Borsuk}. A
topological space $X$ is said to be {\it topologically homogeneous}
if for any two  points $x_{1}, x_{2} \in X$, there is a
homeomorphism of $X$ onto itself taking $x_{1}$ to $x_{2}$. It is a
classical result that all connected manifolds without boundary are
topologically homogeneous. The Bing-Borsuk conjecture states that
all finite-dimensional topologically homogeneous ANR-spaces are manifolds.

It is well-known that Busemann $G$-spaces are topologically
homogeneous \cite{Thurston} (see also in the present paper) 
and locally
contractible, so they are ANR-spaces if they are finite-dimensional
\cite{Kuratowski}. Thus, even though it is hardly believable that
the Busemann conjecture is not true, a counterexample to it would
settle the Bing-Borsuk conjecture in  the negative. On the other
hand, a proof of the Busemann conjecture may shed some light on the
Bing-Borsuk conjecture.

Implied from the basic geometric properties is that every small
metric ball in a Busemann $G$-space is the cone from its center over
its boundary.  As a result of topological homogeneity and this cone
structure, a Busemann $G$-space $M$ is a manifold if and only if all
small metric spheres in $M$ are codimension one manifold factors.
Thus the characterization of small metric spheres is of vital
importance in addressing the question of whether high-dimensional
Busemann $G$-spaces are manifolds in general. Several geometric
properties which imply that a given topological space is a
codimension one manifold factor can be found in \cite{Halverson1,
Halverson2, Halverson3, HaR}.

Demonstrating the topological homogeneity of  small metric spheres
is a key step to proving the general case of the Busemann conjecture
\cite{HaRSurvey}. In this paper we introduce a special type of
homogeneity property, the so-called local $G$-homogeneity. Local
$G$-homogeneity essentially requires that any  sufficiently small
metric ball  can be represented as a cone over any point which is
sufficiently close to its center, the cone lines being geodesics.
We shall demonstrate that  in Busemann $G$-spaces the property of
local $G$-homogeneity implies that all  sufficiently small metric
spheres are mutually homeomorphic and topologically homogeneous.

The following are main results of the present paper:

\begin{thm}\label{main}
Suppose $X$ is a locally $G$-homogeneous Busemann $G$-space.  Then
sufficiently small metric spheres  in $X$ are (strongly)
topologically homogeneous.
\end{thm}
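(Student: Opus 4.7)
The plan is as follows. The first ingredient is the elementary fact (a consequence of the Busemann axioms alone) that for every $x \in X$ there is a radius $\rho(x) > 0$ such that for all $r \leq \rho(x)$ the closed ball $B(x,r)$ is a topological cone over the metric sphere $S(x,r)$ with apex $x$ and cone lines equal to the geodesic segments from $x$. This yields canonical radial homeomorphisms $S(x,r) \cong S(x,r')$ for $0 < r' \leq r$. The hypothesis of local $G$-homogeneity then asserts that the same ball $B(x,r)$ also admits, for every $y$ sufficiently close to $x$, a cone structure from $y$ whose cone lines are again geodesics. Thus every small ball carries a whole family of compatible cone structures parametrized by a neighborhood of its center.

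I would next construct a family of self-homeomorphisms of $S(x,r)$ indexed by such centers $y$. For $y$ close to $x$ and a small auxiliary radius $s > 0$, define $\phi_{x,y}\colon S(x,r) \to S(y,s)$ by sending each $p \in S(x,r)$ to the unique point at distance $s$ from $y$ on the geodesic segment $[y,p]$. Because $B(x,r)$ is a cone from $y$, each such geodesic is a cone line and meets $S(y,s)$ in a single point, so $\phi_{x,y}$ is a homeomorphism. Reversing the roles (using the cone structure on $B(y,s)$ from $x$, valid by local $G$-homogeneity at $y$) produces $\phi_{y,x}$. Composing with the intrinsic radial scalings at $x$ and at $y$ yields a self-homeomorphism
\[
\Psi_y \colon S(x,r) \longrightarrow S(x,r),
\]
with $\Psi_x = \mathrm{id}$, depending continuously on $y$.

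The crucial step is a \emph{local transitivity} assertion: as $y$ varies in a small metric neighborhood of $x$, the set $\{\Psi_y(p) : y \text{ close to } x\}$ covers a neighborhood of $p$ in $S(x,r)$. A rough Euclidean calculation makes this plausible: the infinitesimal variation of $\Psi_y(p)$ in the $y$-parameter equals a strictly positive multiple $(r-s)/s$ of the projection of the perturbation onto the tangent directions at $p$, so $y \mapsto \Psi_y(p)$ is open at $y = x$. Translating this into the purely topological Busemann setting, without invoking any smooth or manifold structure, is the main obstacle; one must extract openness of $y \mapsto \Psi_y(p)$ from the cone axioms and the continuity of $\Psi_y$ alone, since $X$ is not assumed a priori to be a manifold and invariance of domain is not available for free.

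Once local transitivity is established, path-connectedness of $S(x,r)$ together with a standard compactness and chaining argument produces, for any prescribed $p, q \in S(x,r)$, a composition of such $\Psi_{y_i}$ mapping $p$ to $q$. The strong homogeneity refinement then follows by choosing each $y_i$ very close to the preceding center, so that each $\Psi_{y_i}$ differs from the identity only in a small region of $S(x,r)$, and keeping the intermediate centers clustered along a short geodesic path from $p$ to $q$; the total support of the composition can then be confined to any prescribed neighborhood of that path.
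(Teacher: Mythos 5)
Your strategy is genuinely different from the paper's, and it breaks down exactly where you flag it: the ``local transitivity'' claim that $y\mapsto\Psi_y(p)$ is open at $y=x$ is the entire content of the theorem under your approach, and you support it only with a heuristic Euclidean linearization. In a Busemann $G$-space that is not known to be a manifold --- and is not even assumed finite-dimensional in this theorem --- there is no invariance of domain available to upgrade continuity and injectivity to openness, and nothing in the cone axioms prevents the image of a small ball of centers under $y\mapsto\Psi_y(p)$ from being a ``thin'' subset of $S(x,r)$ that misses every neighborhood of $p$. So this is a missing idea, not a deferred verification. A secondary gap: even granting local transitivity, your chaining produces a finite composition of homeomorphisms carrying $p$ to $q$ with small supports, whereas strong topological homogeneity as defined in the paper demands an ambient isotopy $H$ with $H(p,t)=\alpha(t)$ for \emph{every} $t$ along a prescribed path $\alpha$; controlling supports does not by itself yield such a parametrized family.

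The paper avoids any openness argument. Given $y,z\in S(x,\epsilon)$, let $y'$ be the antipode of $y$, let $m$ be the midpoint of the chord $\overline{y'z}$, and let $\gamma=\tfrac12 d(y',z)$. The homeomorphism is the explicit composition
$$S(x,\epsilon)\xrightarrow{\ \Phi[x,\epsilon]\ }S(x,\epsilon)\xrightarrow{\ \psi[m,\gamma]\ }S(m,\gamma)\xrightarrow{\ \Phi[m,\gamma]\ }S(m,\gamma)\xrightarrow{\ \psi[m,\gamma]^{-1}\ }S(x,\epsilon),$$
where the $\Phi$'s are antipodal maps and $\psi[m,\gamma]$ is radial projection centered at $m$, a homeomorphism precisely because local $G$-homogeneity makes $B(x,\epsilon)$ starlike from $m$. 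This sends $y\mapsto y'\mapsto y'\mapsto z\mapsto z$ \emph{by construction}, so no transitivity lemma is needed; and letting $z=\alpha(t)$ vary along a path while proving continuity of the antipodal, projection, and midpoint maps jointly in all parameters (elementary consequences of uniqueness of extension) yields the ambient isotopy at once. To salvage your route you would need either a genuine proof of the openness claim or an explicit point-to-point construction of this kind.
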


\begin{thm}\label{fdim}
Suppose $X$ is a Busemann $G$-space, uniformly locally
$G$-homogeneous on an orbal subset.  Then $X$ is finite-dimensional.
\end{thm}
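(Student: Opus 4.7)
The overall strategy is to reduce Theorem \ref{fdim} to Berestovski\u\i's earlier result \cite{Berestovskii1} that any Busemann $G$-space admitting small geodesically convex balls near some point must be finite-dimensional. Thus it suffices to exhibit a single point of $X$ that possesses a neighborhood basis of geodesically convex metric balls.

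To this end, let $O \subseteq X$ be the orbal subset on which the uniform local $G$-homogeneity holds, with a uniform radius $r > 0$. By hypothesis, for every $x \in O$ the ball $B(x, r)$ is representable as a geodesic cone over any point $y$ sufficiently close to $x$, with quantitative constants independent of the choice of $x \in O$. Fix once and for all a point $p \in O$. The plan is: given $y, z \in B(p, \delta)$ with $\delta$ small, choose a nearby cone vertex $q \in O \cap B(p, \delta)$ and use the cone representation of $B(q, r)$ from $q$ to construct a shortest arc from $y$ to $z$ lying inside $B(p, \varepsilon)$ for any prescribed $\varepsilon$. As $\delta \to 0$ this produces a neighborhood basis of geodesically convex balls at $p$.

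I expect the main obstacle to be this second step, that is, extracting honest metric geodesic convexity from the cone-from-every-nearby-point data. The uniformity of the $G$-homogeneity (as opposed to a merely pointwise local version) appears essential here, because it allows one to shift the cone vertex and to iterate the construction inside arbitrarily small concentric balls while retaining quantitative control on the cone radius. The orbal character of $O$ should enter precisely in guaranteeing that the auxiliary cone vertices used in the argument themselves lie in $O$, so that the uniform constants continue to apply. Once the geodesically convex neighborhood basis at $p$ is in place, the theorem of \cite{Berestovskii1} immediately yields finite-dimensionality of $X$.
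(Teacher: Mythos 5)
Your proposed reduction cannot work, and the obstruction is exhibited in this very paper. You aim to show that uniform local $G$-homogeneity on an orbal subset forces the existence of a point of $X$ with a neighborhood basis of geodesically convex balls, and then to invoke \cite{Berestovskii1}. But Theorem \ref{ex} (realized by the Stadium space, Theorem \ref{ust}) produces a Busemann $G$-space that is uniformly locally $G$-homogeneous on an orbal subset, is locally $G$-homogeneous, and yet has \emph{no} convex metric ball of positive radius anywhere. So the implication you need --- cone-from-every-nearby-point data yields honest metric convexity of small balls --- is false, and the step you yourself flag as ``the main obstacle'' is not merely difficult but impossible. The logical relationship runs the other way: Remark \ref{generalization} records that small convex balls \emph{imply} uniform local $G$-homogeneity on an orbal subset, which is precisely why Theorem \ref{fdim} is a strict generalization of \cite{Berestovskii1} rather than a consequence of it.

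The paper's actual proof is a direct embedding argument that never passes through convexity. Fixing the constants $\delta<\varepsilon_1<\varepsilon_2$ of Definition \ref{unif}, one takes a finite $\varepsilon_0$-net $\{(x_i,z_i)\}_{i=1}^m$ in the compact set of pairs $(x,z)$ with $x$ in a small ball $B(c_0,r_1)$ and $d(x,z)=\varepsilon_2'$, and defines $f(y)=(d(y,z_1),\dots,d(y,z_m))$. Injectivity of $f$ on $B(c_0,r_1)$ is forced by starlikeness: if $x\neq y$ but $d(x,z_i)=d(y,z_i)=\varepsilon$ for the net point $z_i$ near the extension endpoint $z$ of $\overline{xy}$, then the segment from $z$ would meet the sphere $S(z_i,\varepsilon)$ in two points, contradicting that $B(z_i,\varepsilon)$ is starlike with respect to $z$. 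This bounds the covering dimension of a small ball by $m$, and topological homogeneity (Corollary \ref{homo}) transfers the bound to all of $X$. If you want to salvage your write-up, you should abandon the reduction to \cite{Berestovskii1} and instead use the starlikeness hypothesis directly in an injectivity argument of this kind.
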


\begin{thm}\label{ex}
There exists a Busemann $G$-space $X$
with the following properties:
\begin{enumerate}
\item $X$ is uniformly locally $G$-homogeneous
on an orbal subset;
\item $X$ is locally $G$-homogeneous; and
\item $X$ has no convex metric balls of positive radius.
\end{enumerate}
\end{thm}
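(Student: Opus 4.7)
The plan is to build $X$ from the \emph{quasihyperbolic plane} already mentioned in the introduction: a metrically homogeneous Finsler $2$-manifold that is a Busemann $G$-space without any geodesically convex ball of positive radius. Its existence is precisely the assertion of \cite{Busemann5}, rigorously established in \cite{Gribanova}. Thus property (3) is built into the definition of the example and requires no further argument beyond citing these references; the task reduces to verifying the homogeneity properties (1) and (2).

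For (2), I would exploit metric homogeneity: the isometry group of $X$ acts transitively. Fix a point $x_{0}$ and a radius $r_{0}$ smaller than the injectivity/extension constants supplied by the Busemann $G$-space axioms, so that the ball $B(x_{0},r_{0})$ is the union of initial shortest arcs emanating from $x_{0}$ and is a topological disk. Because $X$ is a $2$-manifold with locally prolongable geodesics, the geodesic rays from any point $y$ close to $x_{0}$ also sweep out a topological disk neighborhood; by continuity of the exponential data and invariance of domain, for some $\rho = \rho(r_{0}) > 0$ every $y \in B(x_{0},\rho)$ and every $z \in B(x_{0},r_{0}-\rho)$ lie on a unique shortest arc from $y$ that realizes $B(x_{0},r_{0}-\rho)$ as the cone from $y$ over its boundary with geodesic cone lines. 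That is exactly local $G$-homogeneity at $x_{0}$, and since the isometry group is transitive the property is transported to every point of $X$.

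For (1), observe that metric homogeneity means $X$ itself is a single orbit of its isometry group, and an orbit is orbal by definition. Moreover, the constants $r_{0}$ and $\rho(r_{0})$ obtained above at $x_{0}$ can be taken uniform across $X$, because isometries carry the cone picture around $X$ isometrically. Hence $X$ is uniformly locally $G$-homogeneous on the orbal subset $X$ itself, establishing (1). The main technical obstacle is the careful verification, in the previous paragraph, that every point $y$ near $x_{0}$ actually serves as an apex of a geodesic cone exhausting a slightly shrunken ball; this is where the $2$-dimensional Finsler manifold structure of the quasihyperbolic plane, together with the local prolongability of geodesics guaranteed by the $G$-space axioms, is used essentially, since in higher dimensions such a conclusion is precisely the content of the open Busemann conjecture.
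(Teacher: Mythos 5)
You have correctly located the example in the same place the paper does --- a quasihyperbolic plane in the sense of \cite{Busemann5} and \cite{Gribanova} --- and your reduction of the problem to verifying properties (1) and (2) for such a space, with (3) supplied by Gribanova's classification, matches the paper's strategy. (One small imprecision: non-convexity of all balls is not a feature of \emph{every} quasihyperbolic plane but only of those whose norm $F$ satisfies the tangency condition of Theorem~\ref{nonconvex}; the paper therefore fixes a specific norm, the ``Stadium'' norm, whose indicatrix has straight vertical sides meeting the horizontal tangent lines at the $x$-axis in nontrivial segments.)

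The genuine gap is in your argument for (2). You assert that ``by continuity of the exponential data and invariance of domain'' every point $y$ near $x_0$ realizes a slightly shrunken ball as a geodesic cone with apex $y$. This does not follow from any such soft argument. Stable starlikeness (Definition~\ref{stably starlike}) is a metric, not a topological, condition: one must rule out that a geodesic emanating from an off-center point $y$ meets the sphere $S(x_0,r)$ more than once or tangentially, and neither the manifold structure of a $2$-dimensional $G$-space, nor local prolongability of geodesics, nor invariance of domain excludes this. Indeed the paper explicitly records that it is unknown whether every Busemann $G$-space is locally $G$-homogeneous, and nothing restricts that question to high dimensions. The paper's proof of Theorem~\ref{ust} consists almost entirely of overcoming exactly the obstacle you flag but do not resolve: it computes the geodesics of the Stadium space in closed form (arcs of the parabolas $\lambda C_2+(x_0(\lambda),0)$), derives explicit equations and tangent slopes for the three arcs $B_1,B_2,B_3$ of a metric sphere $S_K$, and compares these slopes with the slopes $\pm\lambda/y$ of the geodesics to show that every geodesic through a suitable neighborhood $P$ of the center meets $S_K$ in exactly two antipodal points. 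Uniformity and the orbal condition then do follow from metric homogeneity as you say, but only once this quantitative step is in place. As written, your proposal proves (3) and sets up (1) and (2), but the heart of the argument is missing.
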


In the Epilogue we shall
collect some unsolved questions.

\section{Preliminaries}

\begin{defn}
\label{bus}
Let $(X,d)$ be a metric space.
$X$ is said to be a {\it Busemann $G$-space}
provided it satisfies the following axioms of Busemann:

\begin{enumerate}
\item[(i)] \emph{Menger Convexity:} Given
distinct points $x,y \in X$, there is a point $z \in X-\{x,y\}$
such that $d(x,z) + d(z,y) = d(x,y)$;

\item[(ii)] \emph{Finite Compactness:}
Every $d$-bounded infinite set has an accumulation point;

\item[(iii)] \emph{Local Extendibility:}
For
every point
$w \in X$, there exists a radius $\rho_w>0$, such
that for any pair of distinct points $x,y$ in
the open ball $U(w,\rho_w)$, there is a point
$z \in U(w,\rho_w)- \{x,y\}$ such that $d(x,y) + d(y,z) = d(x,z)$; and

\item[(iv)] \emph{Uniqueness of Extension:}
Given distinct  points
$x,y \in X$, if there are points $z_1, z_2 \in X$ for which both
equalities
$$d(x,y) + d(y,z_i) = d(x,z_i) \quad \text{for }
i=1,2, $$ and $$d(y,z_1) = d(y,z_2)$$ hold, then $z_1 = z_2$.
\end{enumerate}
\end{defn}

\begin{remark}  \label{property}
From these basic properties, a rich structure on a Busemann
$G$-space can be derived.
If $(X,d)$ is a Busemann $G$-space and $w \in X$ is any point, then
$(X,d)$ satisfies the following properties:
\begin{itemize}
  \item {\it Complete Inner Metric:} $(X,d)$ is a locally compact
  complete inner metric space;
  \item {\it Existence of Geodesics:} Any two points in $X$ can be
  joined by a geodesic;
  \item {\it Local Uniqueness of Joins:}  Any two points $x,y$
  in $U(w,\rho_w)$ can be joined by a unique shortest geodesic in $X$;
  \item {\it Local Cones:} The closed ball $B(w,r), 0<r<\rho_w,$
  is homeomorphic to the cone over its boundary (cf. Proposition \ref{cone}
  below);
  \item {\it Topological Homogeneity:} Every Busemann $G$-space is topologically
  homogeneous. Moreover, topological homogeneity homeomorphism can be chosen to be
  isotopic to the identity (cf. Theorem \ref{hom} and Corollary \ref{homo} below).
  \end{itemize}

\end{remark}

Busemann \cite{Busemann3} has proposed the following conjecture
which still remains open in dimensions $n \ge 5$:

\begin{conj}[Busemann Conjecture]\label{BC}
Every $n$-dimensional Busemann $G$-space,
$n \in \mathbb{N}$, is a topological $n$-manifold.
\end{conj}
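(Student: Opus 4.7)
The plan is to reduce Conjecture \ref{BC} to a statement about the intrinsic topology of small metric spheres, then attack that reduction using the apparatus developed in this paper. By the Local Cones property noted in Remark \ref{property}, any closed ball $B(w,r)$ with $0<r<\rho_w$ is homeomorphic to the cone over its boundary sphere $S(w,r)$. Combined with Topological Homogeneity, this shows that $X$ is an $n$-manifold if and only if every sufficiently small metric sphere is a topological $(n-1)$-manifold; simple connectivity of small spheres (the new result advertised in the introduction for $n\ge 3$) plus the topological Poincar\'e theorem would then upgrade each such sphere to an honest $(n-1)$-sphere. Hence the whole conjecture reduces to an assertion about $S(w,r)$.

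For $n\le 4$ this reduction has already been carried out (see \cite{Busemann3,Krakus,Thurston}). For $n\ge 5$ the natural strategy is to chain together the two main theorems of the present paper. First, one would attempt to show that every Busemann $G$-space is locally $G$-homogeneous, or at least uniformly locally $G$-homogeneous on an orbal subset; the heuristic support is that shortest arcs extend uniquely by axiom (iv) and vary continuously with their endpoints, so one expects to represent any small ball as a geodesic cone from any sufficiently close basepoint. Granted this, Theorem \ref{main} makes small spheres strongly topologically homogeneous, while Theorem \ref{fdim} secures finite-dimensionality of the ambient space. Since Busemann $G$-spaces are locally compact and locally contractible, finite-dimensionality then forces the space and its small spheres to be ANRs by \cite{Kuratowski}. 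Thus each small sphere $S(w,r)$ emerges as a finite-dimensional, strongly topologically homogeneous ANR.

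At this stage the Bing--Borsuk conjecture appears in the guise of small metric spheres. To promote strong topological homogeneity to manifoldness, I would verify one of the codimension-one manifold factor criteria collected in \cite{Halverson1,Halverson2,Halverson3,HaR}, exploiting the additional geometric data provided by the cone structure: stable starlikeness of balls, stable visibility along radial geodesics, and the Kosi\'nski $r$-space property advertised in the paper's keyword list. Once a small sphere is identified as a codimension-one manifold factor, the product $S(w,r)\times \mathbb{R}$ becomes an $n$-manifold, and Edwards's cell-like recognition theorem in dimension $\ge 5$ should then force $S(w,r)$ itself to be an $(n-1)$-manifold; the Local Cones property finally delivers the conclusion for $X$.

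The principal obstacle is this last step. Even granting local $G$-homogeneity in full generality, which is itself nontrivial (the paper only shows finite-dimensionality under an orbal uniformity hypothesis, and gives no way to produce local $G$-homogeneity axiomatically), passing from strong topological homogeneity of a finite-dimensional ANR to a genuine manifold structure is essentially the full strength of Bing--Borsuk restricted to this class. No argument using only axioms (i)--(iv) is currently known to supply the missing manifold factor condition in dimensions $n\ge 5$; any successful proof will need to extract specifically geometric features of Busemann $G$-spaces beyond those that have so far been isolated, such as a quantitative control on how geodesics emanating from two nearby basepoints fellow-travel on small spheres.
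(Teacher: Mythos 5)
This statement is a \emph{conjecture}, and the paper offers no proof of it: the authors state explicitly that it remains open for $n\ge 5$, and the surrounding sections only establish partial results (homology-manifold structure, homogeneity, simple connectivity of small spheres, finite-dimensionality under extra hypotheses). So there is nothing in the paper to match your argument against, and your own text concedes in its last paragraph that the two load-bearing steps --- (a) that every Busemann $G$-space is locally $G$-homogeneous, and (b) that small spheres are codimension one manifold factors --- are unproved. These are exactly the paper's open Questions in the Epilogue, so what you have written is a (reasonable) research program, not a proof.

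Beyond the acknowledged gaps, one step is actually wrong rather than merely missing. You first assert that $X$ is an $n$-manifold if and only if small spheres are topological $(n-1)$-manifolds, and later claim that once $S(w,r)\times\mathbb{R}$ is an $n$-manifold, Edwards's recognition theorem ``forces $S(w,r)$ itself to be an $(n-1)$-manifold.'' Neither implication holds: the entire subject of codimension one manifold factors (the very references \cite{Halverson1,Halverson2,Halverson3,HaR} you cite) exists because there are non-manifold spaces $S$ with $S\times\mathbb{R}$ a manifold, and Edwards's theorem requires the disjoint disks property of $S$, which you have no way to verify and which can genuinely fail. The correct reduction, stated in the paper's Epilogue, is that the conjecture is equivalent to small spheres being codimension one manifold \emph{factors}; once that is known, $X$ is a manifold directly via the local cone structure, and no further upgrade of $S(w,r)$ to a manifold is needed (nor is one available --- Question 8.3 of the paper asks whether such spheres can fail to be manifolds). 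Also note that the simple connectivity result is proved only for metric spheres of radius $r<\rho(x)$, and the Poincar\'e-theorem step you invoke presupposes the sphere is already a manifold, so it cannot be used to bootstrap manifoldness.
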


In this paper we shall show (cf. Theorem~\ref{main}) that stably
visible  metric spheres in any Busemann $G$-space are strongly
topologically homogeneous (cf. Definitions \ref{local G-homog} and \ref{strong homog}).

\section{
Topological properties of
finite-dimensional Busemann $G$-spaces}

Thurston \cite{Thurston} has shown that small metric spheres  in any
$n$-dimensional Busemann $G$-space are homology $(n-1)$-manifolds
(throughout this paper we are working only with {\it singular}
homology with $\mathbb{Z}$ coefficients).

In this section, using only old results, known from topological
literature until 1963, we shall briefly prove all known topological
properties, in particular the assertion above due to  Thurston, for
arbitrary finite-dimensional Busemann $G$-spaces.

For convenience we shall use the following notations and definition.
Let $I$ denote the unit interval $[0,1]$.  $B(x,r)$ shall denote the
closed ball of radius $r$ centered on $x$ and $U(x,r)$ shall denote
the open ball of radius $r$ centered on $x$.

\begin{defn} If $x$, $y$, and $z$ are distinct points in a Busemann
$G$-space and
$$d(x,y) + d(y,z) = d(x,z)$$ we say that \emph{$y$ lies between $x$
and $z$} and denote this by $x - y - z$.
\end{defn}

Let $(X,d)$ be any Busemann $G$-space. For a point $w\in X$ we
denote by $\rho(w)$ the supremum of all numbers $\rho_w$ which
satisfy the condition (iii) from Definition \ref{bus}.

The following statement is an easy consequence of definitions.

\begin{lemma}
\label{rho}
The function $\rho(w)=+\infty$ for all points $w\in X$ or
\begin{equation}
\label{ineqrho}
|\rho(x)-\rho(y)|\leq d(x,y) \quad\mbox{for all}\quad x,y \in X.
\end{equation}
\end{lemma}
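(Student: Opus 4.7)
The plan is to reduce the lemma to the single inequality
\[ \rho(y) \geq \rho(x) - d(x, y) \qquad \text{for all } x, y \in X, \]
with the convention $+\infty - c = +\infty$. Swapping the roles of $x$ and $y$ then immediately yields $|\rho(x) - \rho(y)| \leq d(x, y)$ whenever both values are finite, while forcing $\rho \equiv +\infty$ the moment one value is infinite. This is exactly the dichotomy asserted in the lemma.

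To establish the inequality, I would fix any $\rho' > 0$ with $\rho' + d(x, y) < \rho(x)$ and verify that condition (iii) of Definition \ref{bus} holds at $y$ with radius $\rho'$. Choose $\rho_x$ strictly between $\rho' + d(x, y)$ and $\rho(x)$, so that axiom (iii) holds at $x$ with this radius. For distinct $a, b \in U(y, \rho')$, the triangle inequality gives $d(x, a), d(x, b) < d(x, y) + \rho' < \rho_x$, so $a, b \in U(x, \rho_x)$, and axiom (iii) at $x$ produces an extension $c \in U(x, \rho_x) \setminus \{a, b\}$ with $a - b - c$.

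The one subtlety is that this $c$ need not lie inside the smaller ball $U(y, \rho')$, which is what the lemma secretly requires. To handle this I would invoke the derived local structure collected in Remark \ref{property}: inside $U(x, \rho_x)$ any two points are joined by a unique shortest geodesic, so I can take $c'$ on the geodesic from $b$ to $c$ at arc-distance $\epsilon < \min\{\rho' - d(y, b),\, d(a, b),\, d(b, c)\}$ from $b$. Then $c' \in U(y, \rho') \setminus \{a, b\}$, and combining $a - b - c$ with $b - c' - c$ through
\[ d(a, b) + d(b, c') + d(c', c) = d(a, b) + d(b, c) = d(a, c) \leq d(a, c') + d(c', c) \]
gives $d(a, b) + d(b, c') \leq d(a, c')$, which with the triangle inequality forces $a - b - c'$. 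Thus (iii) holds at $y$ with radius $\rho'$, so $\rho(y) \geq \rho'$, and letting $\rho' \nearrow \rho(x) - d(x, y)$ finishes the argument. The main obstacle is precisely this truncation: axiom (iii) by itself grants no control over the length of the extension it produces, so the local uniqueness of shortest joins (together with the existence of geodesics) is essential for shortening $c$ into the correct ball. Everything else reduces to triangle inequalities.
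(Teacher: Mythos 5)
Your proof is correct. The paper offers no argument for this lemma (it is dismissed as ``an easy consequence of definitions''), and your write-up supplies exactly the standard reasoning one would expect: reduce to the one-sided bound $\rho(y)\geq\rho(x)-d(x,y)$, and handle the genuine subtlety that the extension point furnished by axiom (iii) at $x$ need not lie in the smaller ball around $y$, by truncating along a shortest arc from $b$ to $c$. One minor remark: you do not actually need the \emph{local uniqueness} of joins for the truncation step, only the \emph{existence} of a shortest arc from $b$ to $c$ (which follows from Menger convexity and finite compactness); any shortest arc yields the betweenness computation $d(a,b)+d(b,c')=d(a,c')$ that you carry out.
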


One can easily  sequentially
prove the
assertions of
the next proposition.

\begin{prop}
\label{cone} Suppose that $0<r<\rho(x).$ Let $S:=S(x,r)$  and
$B:=B(x,r)$.  Then
\begin{itemize}
  \item For every point $y$ in the sphere $S$ there is a
  unique
shortest arc (segment) $\overline{xy}$, joining points $x$ and $y;$

\item Segment $\overline{xy}$ continuously depends on point $y\in S$
in the sense that the real-valued function  $\phi:S\times S \to \mathbb{R}$ where
$$\phi(y_1,y_2):=d_H(\overline{xy_1}, \overline{xy_2})$$
where $d_H$ denotes the Hausdorff distance (between compact subsets),
is continuous;

\item Every point $z\in B -\{x\}$ lies on a
unique segment $\overline{xy}, y=y(z)\in S;$ and

\item Let $c:S\times I \rightarrow C(S)$ be the canonical map of $S$ onto its
cone, identifying all points $(y,0)\in S\times I$ to the vertex $v$ of the cone.
Then
the map $f:B\to C(S),$ defined by the formula
$$ f(z) = \left\{
            \begin{array}{ll}
              c\left(y(z), \frac{d(x,z)}{ d(x,y(z))} \right), &   z\in B-\{x\}\\
              v, &   z=x
            \end{array}
          \right. $$
is a homeomorphism.
\end{itemize}

\end{prop}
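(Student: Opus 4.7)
The strategy is to establish the four assertions in the order listed, each building on the previous one; the common thread is that the hypothesis $r<\rho(x)$ lets us fix a radius $\rho_x$ with $r<\rho_x\le\rho(x)$ so that $B$ lies inside $U(x,\rho_x)$, where the local extendibility and uniqueness of extension axioms are in force. For the first bullet, any $y\in S$ satisfies $d(x,y)=r<\rho_x$, so the Local Uniqueness of Joins property listed in Remark~\ref{property} yields a unique shortest arc $\overline{xy}$ immediately.

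For the second bullet, parameterize each shortest arc by arclength as a $1$-Lipschitz curve $\gamma_y\colon[0,r]\to X$. The family $\{\gamma_y : y\in S\}$ takes values in the compact set $B$ and is equicontinuous, so for $y_n\to y$ in $S$ an Arzel\`a--Ascoli argument shows that every subsequence of $(\gamma_{y_n})$ has a further subsequence converging uniformly to a $1$-Lipschitz curve from $x$ to $y$ of length $r=d(x,y)$, which must coincide with $\gamma_y$ by the uniqueness from the first bullet; uniform convergence of parameterizations implies Hausdorff convergence of images, giving continuity of $\phi$. For the third bullet, given $z\in B\setminus\{x\}$, I would obtain $y(z)\in S$ by iterating local extendibility: starting from $\overline{xz}$, the set $T$ of distances $t\in[d(x,z),r]$ achievable as $d(x,w)$ for some $w$ with $x-z-w$ (or $w=z$) is nonempty, closed by finite compactness and continuity of $d$, and open at any interior point by a fresh application of local extendibility at the current tip, so $T=[d(x,z),r]$. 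Uniqueness of $y(z)$ follows directly from axiom (iv), since two candidates $y_1,y_2\in S$ would both be extensions of $\overline{xz}$ to the same distance $r$ from $x$.

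For the fourth bullet, $f$ is well-defined by the third bullet, and both injectivity and surjectivity are immediate, since the components of $c(y,t)$ recover the direction $y(z)$ and the normalized radial coordinate $d(x,z)/r$. Continuity of $f$ at $z\neq x$ reduces to continuity of $z\mapsto y(z)$; a sequential argument using the second bullet handles this, because any subsequential limit of $y(z_n)$ lies on a segment through which the limit $z$ passes, and so must equal $y(z)$ by the uniqueness just proved. Continuity at $x$ follows because $d(x,z)/r\to 0$ collapses the radial coordinate and drives $f(z)$ to the cone vertex. Since $B$ is compact (by finite compactness plus boundedness) and $C(S)$ is Hausdorff, the continuous bijection $f$ is automatically a homeomorphism. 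The main technical obstacle I anticipate is the existence half of the third bullet: one must verify that successive applications of local extendibility and axiom (iv) can be spliced consistently, without ever escaping $U(x,\rho_x)$ where these axioms apply, to produce a single geodesic of total length $r$.
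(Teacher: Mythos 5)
Your argument is correct and follows exactly the sequential route the paper has in mind: the paper gives no proof of Proposition \ref{cone}, remarking only that its assertions can be ``easily sequentially'' proved, and your chain (unique joins from the local axioms, Arzel\`a--Ascoli for continuity of segments, extension of $\overline{xz}$ to $S$ via local extendibility plus axiom (iv), then a compact-to-Hausdorff continuous bijection) is the standard way to fill that in. The splicing issue you flag in the third bullet is real but is handled by Busemann's transitivity of betweenness in $G$-spaces, so no essential gap remains.
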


\begin{remark}
\label{gc}
Note that the first two assertions of
Proposition \ref{cone} remain
true if we change $x$ by any  other point $x'\in B-S.$ If for some
point $x'\in B-S,$ every segment $\overline{x'y},$ where $y\in S,$
intersects $S$ only at the point $y$ (in other words, \textit{the
sphere $S$ is visible from the point $x'$}), then the last statement
of Proposition \ref{cone} is true after replacement $x$ by $x'.$ We
shall say in this case that the above map $f$ defines \textit{the
canonical structure of geodesic cone on $B$ with the vertex $x$ (or
$x'$)} and the closed ball $B$ is (geodesically) \textit{star-like
with respect to the point $x$ (or $x'$)}.
\end{remark}

\begin{lemma}
\label{int} For any two numbers $r_1,r_2\in (0,1)\subset I,$ there
is an isotopy $h:I\times I\rightarrow I$ fixed on $\{0,1\}$ such
that $h(\cdot,0)=\text{id}_{I}$ and $h(r_2,1)=r_1.$
\end{lemma}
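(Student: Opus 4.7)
The plan is to write down the isotopy explicitly as a one-parameter family of piecewise linear self-maps of $I$. Let $r(t) := (1-t)\, r_2 + t\, r_1$ for $t \in I$, so that $r(0) = r_2$ and $r(1) = r_1$, with $r(t) \in (0,1)$ throughout by convexity of $(0,1)$. For each fixed $t$, define $h(\cdot,t) \colon I \to I$ to be the piecewise linear homeomorphism that fixes $0$ and $1$ and sends $r_2$ to $r(t)$; explicitly,
$$
h(s,t) = \begin{cases} \dfrac{r(t)}{r_2}\, s, & 0 \le s \le r_2, \\[1mm] r(t) + \dfrac{1-r(t)}{1-r_2}\,(s-r_2), & r_2 \le s \le 1. \end{cases}
$$

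Three routine verifications then finish the argument. First, $h$ is jointly continuous on $I \times I$, since the two linear pieces agree at $s = r_2$ (both equal $r(t)$) and their coefficients depend continuously on $t$. Second, each slice $h(\cdot,t)$ is a homeomorphism of $I$ fixing the endpoints, because both slopes $r(t)/r_2$ and $(1-r(t))/(1-r_2)$ are strictly positive. Third, at $t=0$ one has $r(0) = r_2$, so both slopes equal $1$ and $h(\cdot,0) = \mathrm{id}_I$; at $t=1$ one has $h(r_2,1) = r(1) = r_1$. Moreover $h(0,t) = 0$ and $h(1,t) = 1$ for every $t$, so the isotopy is fixed on $\{0,1\}$ as required.

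There is essentially no obstacle here: the construction is dictated by the requirement of fixing the endpoints while moving the single distinguished point $r_2$ along the straight-line path to $r_1$, and the only thing to check is that this path stays in the open interval $(0,1)$, which is immediate from its convexity.
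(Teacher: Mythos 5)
Your proof is correct and follows essentially the same strategy as the paper's: both simply exhibit an explicit isotopy of $I$ fixing the endpoints and dragging $r_2$ to $r_1$. The only difference is the concrete formula --- the paper uses the power map $h(r,t)=r^{1+t(\alpha-1)}$ with $r_1=r_2^{\alpha}$ (after assuming $r_1<r_2$), while you use a piecewise linear family, which works equally well and avoids even that harmless reduction.
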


\begin{proof}
We can suppose that $0<r_1<r_2<1.$ Then there is a unique real
number $\alpha>1$ such that $r_1=r_2^{\alpha}.$ The map
$h(r,t)=r^{1+t(\alpha-1)}$ is  the required isotopy.
\end{proof}

\begin{prop}
\label{cis} Let $C:=C(S)$ be a cone on a topological space $S,$
$r_1,r_2\in (0,1),$ and $x=c(s_0,r_2)$, $y=c(s_0,r_1)$ for some
$s_0\in S.$ Then there is an isotopy $H:C\times I\rightarrow C$
fixing the base and the vertex of the cone $C$ such that
$H(\cdot,0)=id_{C}$ and $H(x,1)=y.$
\end{prop}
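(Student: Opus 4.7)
The plan is to lift the one-dimensional isotopy produced in Lemma \ref{int} radially along the cone lines. Recall that the cone $C = C(S)$ is realized as the quotient $S \times I / (S \times \{0\})$ with quotient map $c \colon S \times I \to C$, and the cone lines are precisely the images of the slices $\{s\} \times I$. Pick the isotopy $h \colon I \times I \to I$ from Lemma \ref{int} with $h(\cdot, 0) = \mathrm{id}_I$, $h(r_2, 1) = r_1$, and $h$ fixed on $\{0,1\}$.

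Next I would define $H \colon C \times I \to C$ by the formula
\[
H(c(s,r),t) := c(s, h(r,t)).
\]
Well-definedness on the quotient follows from $h(0,t) = 0$ for all $t$ (since $h$ fixes $0$), so all representatives $(s,0)$ of the vertex are sent to $c(s,0) = v$. Continuity is inherited from the continuity of $c$ and $h$, passing through the quotient $S \times I \to C$ on the first factor. To see that each slice $H(\cdot, t)$ is a homeomorphism of $C$, observe that for fixed $t$ the map $h(\cdot, t)$ is a continuous self-bijection of the compact interval $I$ (in the explicit formula from Lemma \ref{int}, $r \mapsto r^{1+t(\alpha-1)}$ is strictly increasing with fixed endpoints), hence a homeomorphism of $I$; therefore $c(s,r) \mapsto c(s, h(r,t))$ is a homeomorphism of $C$ preserving each cone line. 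The required boundary conditions are then immediate: $H(\cdot,0) = \mathrm{id}_C$ because $h(\cdot,0) = \mathrm{id}_I$; $H(v,t) = v$ because $h(0,t) = 0$; $H(c(s,1), t) = c(s,1)$ because $h(1,t) = 1$; and $H(x,1) = c(s_0, h(r_2,1)) = c(s_0, r_1) = y$.

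The only subtle point, and the one requiring care, is continuity of $H$ at the vertex: the cone topology near $v$ forces us to check that a full neighborhood $c(S \times [0,\varepsilon))$ of $v$ is mapped into a small neighborhood of $v$ by each $H(\cdot,t)$, and continuously so in $t$. This reduces to the uniform statement that $h(r,t) \to 0$ as $r \to 0$, uniformly in $t \in I$, which holds because $h$ is continuous on the compact set $\{0\} \times I$ with value $0$. Everything else is a formality, so no further obstacle arises.
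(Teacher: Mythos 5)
Your proposal is correct and uses exactly the same construction as the paper: the isotopy $H(c(s,r),t)=c(s,h(r,t))$ obtained by lifting the interval isotopy $h$ of Lemma \ref{int} along the cone lines. The additional checks you supply (well-definedness at the vertex, continuity through the quotient, and the boundary conditions) are all accurate elaborations of the paper's one-line proof.
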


\begin{proof}
The required isotopy is defined by the  formula
$H(c(s,r),t)=c(s,h(r,t)),$ where $s\in S,r\in I$ and $h$ is the
isotopy from  Lemma \ref{int}.
\end{proof}

\begin{prop}
\label{bis} Let $B:=B(x_0,r)$, $ x_0\in X$, $0<r<\rho(x_0)$,
$S:=S(x_0,r).$ Then for any two points $w,z$ lying inside some
segment $\overline{x_0 s_0},$ where $s_0\in S,$ there is an isotopy
$H':B\times I\rightarrow B$ fixing $S$ and $x_0$ such that
$H'(\cdot,0)=id_B$ and $H'(w,1)=z.$
\end{prop}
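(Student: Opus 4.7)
The plan is to transport the statement to the model cone $C(S)$ via the homeomorphism $f:B\to C(S)$ supplied by Proposition \ref{cone}, apply the cone-level isotopy result of Proposition \ref{cis}, and then pull back. Recall that $f$ sends $x_0$ to the vertex $v$ of the cone and restricts to a homeomorphism $S\to c(S\times\{1\})$ onto the base, while each geodesic segment $\overline{x_0 s}$ with $s\in S$ is mapped homeomorphically onto the corresponding cone generator $\{c(s,t):t\in I\}$, with $f(z)=c(s,d(x_0,z)/d(x_0,s))$ for $z\in\overline{x_0 s}\setminus\{x_0\}$.

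I would first unpack the hypothesis: the two points $w,z$ lie on the segment $\overline{x_0 s_0}$ strictly between $x_0$ and $s_0$ (otherwise the statement is trivial or forced by the fixing condition on $S$ and $x_0$). Set $r_2:=d(x_0,w)/d(x_0,s_0)$ and $r_1:=d(x_0,z)/d(x_0,s_0)$; then $r_1,r_2\in(0,1)$ and $f(w)=c(s_0,r_2)$, $f(z)=c(s_0,r_1)$. Proposition \ref{cis} now supplies an isotopy $H:C(S)\times I\to C(S)$ that fixes the base and the vertex, with $H(\cdot,0)=\mathrm{id}_{C(S)}$ and $H(c(s_0,r_2),1)=c(s_0,r_1)$.

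I would then define the desired isotopy on the ball by conjugation:
\[
H'(b,t):=f^{-1}\bigl(H(f(b),t)\bigr),\qquad (b,t)\in B\times I.
\]
Continuity of $H'$ follows from continuity of $f$, $f^{-1}$, and $H$; $H'(\cdot,t)$ is a homeomorphism of $B$ for every $t$ because it is a composition of three homeomorphisms; and $H'(\cdot,0)=\mathrm{id}_B$. Since $H$ fixes the base $c(S\times\{1\})=f(S)$ and the vertex $v=f(x_0)$ throughout the isotopy, $H'$ fixes $S$ and $x_0$ at every $t\in I$. Finally, $H'(w,1)=f^{-1}(H(f(w),1))=f^{-1}(c(s_0,r_1))=z$, as required.

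There is no real obstacle here; the only things to check carefully are that $f$ really does carry the radial segment $\overline{x_0 s_0}$ onto the cone generator through $s_0$ (which is exactly the content of the fourth bullet of Proposition \ref{cone}) and that the fixing conditions of Proposition \ref{cis} translate into the desired fixing of $S$ and $x_0$ under $f^{-1}$, which is immediate from the description of $f$.
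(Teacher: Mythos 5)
Your proposal is correct and is essentially the paper's own argument: the paper likewise conjugates the cone isotopy of Proposition \ref{cis} by the cone homeomorphism from Proposition \ref{cone} (writing $H'=f\circ H\circ f^{-1}$ with $f$ oriented as a map $C(S)\to B$, the reverse of your convention). Your version simply spells out the identifications of $w,z$ with cone points and the fixing conditions in more detail.
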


\begin{proof}
By Proposition \ref{cone}, there is a homeomorphism
$f:C=C(S)\rightarrow B.$  Then there are numbers $r_1,r_2\in (0,1)$
such that $f(s_0,r_2)=w, f(s_0,r_1)=z.$ We define the isotopy $H'$
by the formula $H'=f\circ H\circ f^{-1},$ where $H$ is the isotopy
from Proposition \ref{cis}.
\end{proof}

\begin{lemma} \label{lemma}
\label{isotins} Let $B=B(x_0,r)$, $x_0\in X,0<r<\rho(x_0)$,
$S=S(x_0,r)$. Then for every point $s_0\in S$ there are a point $y$
and an isotopy $H'':B\times I\rightarrow B$ fixing $S$
such that $x_0- y- s_0,$ $H''(\cdot,0)=id_B,$ and
$H''(x_0,1)=y.$
\end{lemma}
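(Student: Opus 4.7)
The plan is to exploit the cone structure of Proposition~\ref{cone} in two different ways --- once at $x_0$, and once at a point just beyond $x_0$ on the geodesic through $s_0$ --- and then to apply an Alexander-trick-style isotopy in the common cone model.

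By local extendibility (axiom~(iii)), for every sufficiently small $\epsilon>0$ there is a point $y_\epsilon\in X$ with $y_\epsilon-x_0-s_0$ and $d(y_\epsilon,x_0)=\epsilon$; Lemma~\ref{rho} ensures that for $\epsilon<\rho(x_0)-r$ we have $r<\rho(y_\epsilon)$, so Proposition~\ref{cone} is applicable at $y_\epsilon$ on a ball containing $B$. The main step, and the one I expect to be the principal obstacle, is to verify that for $\epsilon$ small enough the ball $B$ is geodesically star-like with respect to $y_\epsilon$ in the sense of Remark~\ref{gc}: each segment $\overline{y_\epsilon s}$, $s\in S$, lies in $B$ and meets $S$ only at $s$. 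Using continuity of segments in their endpoints (Proposition~\ref{cone}, second bullet) together with compactness of $S$, one shows that $\overline{y_\epsilon s}$ converges in Hausdorff distance to $\overline{x_0 s}$ uniformly in $s\in S$ as $\epsilon\to 0$; a compactness--contradiction argument --- passing to convergent subsequences of putative ``bad'' data $(\epsilon_n,s_n)$ and exploiting that $\overline{x_0 s_\infty}$ meets $S$ only at $s_\infty$ --- then yields the star-like property for all $\epsilon$ below some threshold.

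Granted this, Remark~\ref{gc} produces a second cone map $g:B\to C(S)$ with vertex $y_\epsilon$. Because for each $s\in S$ the segment $\overline{y_\epsilon s}$ meets $S$ only at $s$, one has $g(s)=c(s,1)=f(s)$ on all of $S$. Thus $\phi:=g\circ f^{-1}:C(S)\to C(S)$ is a homeomorphism fixing the base $c(S,1)$ pointwise, and since the segment from $y_\epsilon$ through $x_0$ extends uniquely to $s_0$, we compute $\phi(v)=g(x_0)=c(s_0,\epsilon/(r+\epsilon))$.

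Finally, I apply the Alexander trick to $\phi$: for $t\in(0,1]$, set
\[
\phi_t(c(s,r))=\begin{cases} c(s,r), & r\geq t,\\ c(s',tr'), & r\leq t,\end{cases}
\]
where $c(s',r'):=\phi(c(s,r/t))$; this family extends continuously to $t=0$ by $\phi_0:=\text{id}_{C(S)}$. The two branches agree at $r=t$ since $\phi$ fixes the base; each $\phi_t$ is a homeomorphism of $C(S)$ fixing the base; $\phi_1=\phi$; and joint continuity, including at $t=0$, follows from compactness of $C(S)$. Conjugating by $f$, the family $H''_t:=f^{-1}\circ\phi_t\circ f$ is an isotopy of $B$ fixing $S$ pointwise with $H''_0=\text{id}_B$ and $H''_1(x_0)=f^{-1}(g(x_0))$. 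The latter point $y$ lies on $\overline{x_0 s_0}$ at distance $\epsilon r/(r+\epsilon)$ from $x_0$, so $x_0-y-s_0$ as required.
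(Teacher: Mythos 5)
Your reduction to the Alexander trick in the cone model is fine as far as it goes, but the step you yourself flag as ``the principal obstacle'' is a genuine gap, and it is not a technical one: the assertion that $B=B(x_0,r)$ is geodesically star-like with respect to every point $y_\epsilon$ sufficiently close to $x_0$ is precisely the stable starlikeness of Definition~\ref{stably starlike}, i.e.\ the local $G$-homogeneity of Definition~\ref{local G-homog}, which the paper explicitly states is unknown for general Busemann $G$-spaces and records as the first Question of the Epilogue. (Your argument nowhere uses that $y_\epsilon$ lies on the prolongation of the radius $\overline{s_0x_0}$, so if it worked it would prove the full open statement.) The compactness--contradiction scheme does not close it: Hausdorff convergence of $\overline{y_{\epsilon_n}s_n}$ to the radius $\overline{x_0s_\infty}$ only tells you that any violation of ``contained in $B$ and meeting $S$ only at the endpoint'' must occur in a shrinking neighborhood of $s_n$; since the limit radius meets $S$ exactly at its endpoint, with no strict inequality there to pass to the limit, the nearby segments may still leave $B$ or cross $S$ several times arbitrarily close to $s_n$ (metric balls in Busemann $G$-spaces need not be convex --- see the Stadium space), and the covering property $B=\bigcup_{s\in S}\overline{y_\epsilon s}$ required for the cone map $g$ is likewise not preserved under Hausdorff limits. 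So Remark~\ref{gc} cannot be invoked at $y_\epsilon$ without an additional hypothesis.

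The paper's proof avoids this issue entirely. It extends $\overline{s_0x_0}$ past $x_0$ to a point $z\in U(x_0,r)$, takes $z_0$ to be the midpoint of $\overline{s_0z}$ and $B'=B(z_0,r')$ with $r'=d(z_0,s_0)$; the triangle inequality gives $B'\subset B$, and $B'$ is a metric ball about its own center, hence automatically a geodesic cone over $S'=S(z_0,r')$ by Proposition~\ref{cone} --- no visibility of $S$ from an off-center point is ever needed. Both $x_0$ and the target point $y$ lie in the interior of the single radius $\overline{z_0z}$ of $B'$, so the radial isotopy of Proposition~\ref{bis} (itself an Alexander-type reparametrization along cone lines, playing the role of your $\phi_t$) moves $x_0$ to $y$ fixing $S'$ and $z_0$, and extending by the identity outside $B'$ gives $H''$. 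If you want to salvage your approach, replace ``$B$ is star-like with respect to $y_\epsilon$'' by ``some metric ball centered on the geodesic through $s_0$ and contained in $B$ has $x_0$ as a non-central interior point,'' which is exactly the paper's move.
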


\begin{figure} \label{thm}
    \begin{center}
\epsfig{file=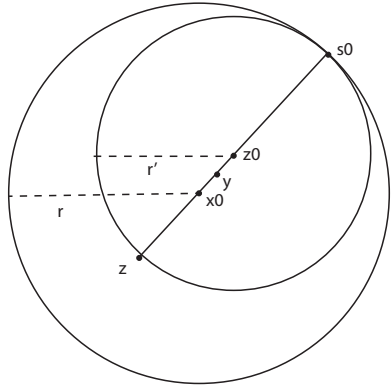, width=2.0in, height=2.0in}
        \caption{Sketch for Lemma \ref{lemma}}
    \end{center}
\end{figure}

\begin{proof}
There is a point $z\in U:=U(x_0,r)$ such that $s_0-x_0-z.$
Let $z_0$ be the midpoint of $\overline{s_0z}.$ Then
$z-x_0-z_0$ because $d(s_0,z)<2r.$ Thus there is a point $y$ such
that $z_0-y-x_0.$  Let $ S':=S(z_0,r')$ and $B':=B(z_0,r'),$ where
$r'=d(z_0,s_0)$. Note
that $ z\in S'$. By the Triangle inequality, $B'\subset B.$ Thus
$B'$ is a geodesic cone over $S'$ with the vertex $z_0.$
By Proposition \ref{bis}, there is an isotopy
$H:B'\times I\rightarrow B'$ fixing $S'$ and $z_0$ such that
$H(\cdot,0)=id_{B'}$ and $H(x_0,1)=y.$  So we can extend the
isotopy $H$ to the required one
$H''$ on $B$, fixing $H$ outside of $B'.$
\end{proof}

\begin{thm}
\label{bbis} Let $B=B(x_0,r)$, $x_0\in X$, $0<r<\rho(x_0).$ Then for
every point $x \in U$ there is an isotopy $H:B\times I\rightarrow B$
fixing $S:=S(x_0,r)$ such that $H(\cdot,0)=id_B$ and $H(x_0,1)=x.$
\end{thm}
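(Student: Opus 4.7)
The plan is to obtain the desired isotopy by concatenating two previously established isotopies: one from Lemma \ref{isotins} that pushes $x_0$ slightly off itself into the interior of $B$ along a chosen direction, and one from Proposition \ref{bis} that then slides along the same geodesic segment until reaching the target $x$. The natural choice of direction is precisely the geodesic ray from $x_0$ through $x$ to the boundary sphere $S$.

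Concretely, first handle the trivial case $x=x_0$ with the constant isotopy. Otherwise, by Proposition \ref{cone}, there is a unique $s_0\in S$ such that $x$ lies on the segment $\overline{x_0s_0}$; since $d(x_0,x)<r$, the point $x$ lies strictly between $x_0$ and $s_0$. Apply Lemma \ref{isotins} to this $s_0$ to obtain a point $y$ with $x_0-y-s_0$ and an isotopy $H_1\colon B\times I\to B$ fixing $S$ with $H_1(\cdot,0)=\mathrm{id}_B$ and $H_1(x_0,1)=y$. Since $y$ and $x$ are both strictly interior to the segment $\overline{x_0s_0}$, Proposition \ref{bis} supplies an isotopy $H_2\colon B\times I\to B$ fixing both $S$ and $x_0$, with $H_2(\cdot,0)=\mathrm{id}_B$ and $H_2(y,1)=x$. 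Concatenate them in the usual way:
\[
H(b,t)=\begin{cases}H_1(b,2t),&0\le t\le 1/2,\\ H_2\bigl(H_1(b,1),\,2t-1\bigr),&1/2\le t\le 1.\end{cases}
\]
Then $H$ fixes $S$ pointwise (both $H_1$ and $H_2$ do), $H(\cdot,0)=\mathrm{id}_B$, and $H(x_0,1)=H_2(y,1)=x$, as required.

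I expect no serious obstacle; the argument is essentially a direct assembly of the two preceding results. The only minor point to watch is ensuring that both $y$ and $x$ qualify as \emph{interior} points of the segment $\overline{x_0s_0}$ so that Proposition \ref{bis} applies — this is where the strict inequality $d(x_0,x)<r$ (i.e., $x\in U$ rather than $x\in B$) and the betweenness relation $x_0-y-s_0$ given by Lemma \ref{isotins} are used.
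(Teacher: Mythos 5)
Your proof is correct and takes essentially the same route as the paper's: both apply Lemma \ref{isotins} to push $x_0$ to a point $y$ on the segment $\overline{x_0s_0}$ determined by $x$, then use Proposition \ref{bis} to slide $y$ to $x$ along that segment, and concatenate the two isotopies. Your added care about $x$ and $y$ being interior to $\overline{x_0s_0}$ is a reasonable explicit check that the paper leaves implicit.
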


\begin{proof}
We need only to
consider the case when $x\neq x_0.$ Then there is
unique point $s_0\in S$ such $x_0-x-s_0.$ By Lemma \ref{isotins}
there  exist
a point $y$ and an isotopy $H'':B\times I\rightarrow B$ fixing $S$
such that $x_0- y- s_0,$ $H''(\cdot,0)=id_B,$ and
$H''(x_0,1)=y.$ By Proposition \ref{bis},
there is an isotopy $H':B\times I\rightarrow B$ fixing $S$ such that
$H'(\cdot,0)=id_B$ and $H'(y,1)=x.$ Now the ``composition'' $H$ of
isotopies $H''$ and $H'$ gives us the required isotopy.
\end{proof}

\begin{cor}
\label{homog} For an arbitrary closed ball $B:=B(x_0,r)$ in a
Busemann $G$-space, of  radius $r$, $0<r<\rho(x_0),$ and any $x
\in U(x_0,r)$ there is a homeomorphism $h: B \to B$, fixing the
sphere $S(x_0,r)$, such that $h(x_0)=x.$
\end{cor}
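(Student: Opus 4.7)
The plan is immediate: the corollary is essentially an extraction from Theorem~\ref{bbis}. I would simply take the isotopy $H : B \times I \to B$ guaranteed by Theorem~\ref{bbis}, which fixes the sphere $S(x_0,r)$, satisfies $H(\cdot,0) = \mathrm{id}_B$, and sends $x_0$ to $x$ at time $1$, and then define $h := H(\cdot,1) : B \to B$.

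It remains only to verify that $h$ is indeed a homeomorphism with the desired properties. The map $h$ is continuous as the restriction of the continuous map $H$ to the slice $B \times \{1\}$. Since $H$ is an isotopy, each time slice $H(\cdot,t)$ is a homeomorphism of $B$, so in particular $h = H(\cdot,1)$ is a homeomorphism of $B$ onto itself. The condition that $H$ fixes $S(x_0,r)$ pointwise for all $t$ gives $h|_{S(x_0,r)} = \mathrm{id}_{S(x_0,r)}$, and the condition $H(x_0,1) = x$ gives $h(x_0) = x$.

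There is essentially no obstacle here; the whole content of the corollary has already been packaged into Theorem~\ref{bbis}. The only reason to state it separately is that many applications (for instance, deducing topological homogeneity of the Busemann $G$-space and of small metric spheres, which is the theme of the paper) need only the endpoint homeomorphism rather than the full isotopy, and the formulation in terms of a homeomorphism fixing the bounding sphere is the convenient statement to cite. Accordingly, the proof will consist of a single sentence invoking Theorem~\ref{bbis} and setting $h = H(\cdot,1)$.
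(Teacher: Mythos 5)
Your proposal is correct and matches the paper exactly: the corollary is stated without proof immediately after Theorem~\ref{bbis}, being precisely the time-one slice $h = H(\cdot,1)$ of the isotopy constructed there. Nothing further is needed.
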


\begin{thm}
\label{hom} Let $x,y$ be any two points in a Busemann $G$-space,
$X.$ Then there is an isotopy $H:X\times I\rightarrow X$ such that
$H(\cdot,0)=id_{X}$ and $H(x,1)=y.$
\end{thm}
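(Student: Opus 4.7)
The plan is to promote the local isotopy statement of Theorem \ref{bbis} to a global one by a standard equivalence-class argument. Declare $x \sim y$ to mean that there is an isotopy $H \colon X \times I \to X$ with $H(\cdot, 0) = \mathrm{id}_X$ and $H(x, 1) = y$. I first verify that $\sim$ is an equivalence relation. Reflexivity is the constant isotopy. For symmetry, given $H$ taking $x$ to $y$ with time-$1$ homeomorphism $h_1$, the formula $\bar H(z,t) = H\bigl(h_1^{-1}(z), 1-t\bigr)$ defines an isotopy with $\bar H(\cdot, 0) = \mathrm{id}_X$ and $\bar H(y,1) = x$. For transitivity, given $H_1$ taking $x$ to $y$ and $H_2$ taking $y$ to $z$, concatenate by setting $H(w,t) = H_1(w, 2t)$ on $[0, 1/2]$ and $H(w,t) = H_2\bigl(H_1(w,1),\, 2t-1\bigr)$ on $[1/2, 1]$; the two pieces match at $t=1/2$, and each time-slice is a homeomorphism of $X$ as a composition of homeomorphisms.

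The local step is the claim that if $w \in X$ and $x, y \in U(w, \rho(w))$, then $x \sim y$. To prove it, choose $r$ with $\max\{d(w,x), d(w,y)\} < r < \rho(w)$, so that $x, y \in U(w,r)$. Theorem \ref{bbis} produces an isotopy $H \colon B(w,r) \times I \to B(w,r)$ fixing $S(w,r)$ with $H(\cdot,0) = \mathrm{id}$ and $H(w,1) = x$; extending by the identity outside $B(w,r)$ gives an isotopy of $X$ witnessing $w \sim x$. Symmetrically $w \sim y$, and transitivity gives $x \sim y$.

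From the local step the equivalence classes of $\sim$ are open in $X$, since each point $w$ has the open neighborhood $U(w, \rho(w))$ contained in its class. Busemann's axioms, via Menger convexity and the Existence of Geodesics recalled in Remark \ref{property}, imply that any two points of $X$ are joined by a (geodesic) path, so $X$ is path-connected and in particular connected. A connected space whose points are partitioned into open equivalence classes has a single class, so $x \sim y$ for every pair $x, y \in X$, which is the conclusion of the theorem.

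I do not foresee a genuine obstacle here: Theorem \ref{bbis} is the substantive input, and the only care required is the explicit bookkeeping in the reversal $\bar H$ and the concatenation above so that each intermediate map is a global homeomorphism. The argument is essentially the promotion of a local isotopy result to a global one on a connected space.
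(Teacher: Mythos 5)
Your proof is correct. It rests on exactly the same substantive input as the paper's --- Theorem \ref{bbis} applied to a small ball, extended by the identity off that ball because the local isotopy fixes the bounding sphere, and then concatenation of finitely many such isotopies --- but the bookkeeping is organized differently. The paper argues along the explicit geodesic segment $\overline{xy}$: it invokes Lemma \ref{rho} (continuity, or infinitude, of $\rho$) to extract a uniform radius $r$ with $r<\rho(w)$ for all $w$ on the compact segment, subdivides the segment into a chain $x_0=x,\dots,x_k=y$ with consecutive distances less than $r$, and composes the resulting local isotopies. You instead make ``joinable by an isotopy starting at the identity'' an equivalence relation, observe that each class contains $U(w,\rho(w))$ and is therefore open, and conclude by connectedness (path-connectedness via existence of geodesics). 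Your route is slightly more economical in its hypotheses --- it needs only $\rho(w)>0$ at each point rather than the Lipschitz/continuity estimate of Lemma \ref{rho} --- while the paper's chain along $\overline{xy}$ is more constructive and produces an isotopy that visibly drags $x$ to $y$ along the segment, which is closer in spirit to the strong-homogeneity statements proved later. Your explicit verifications of symmetry ($\bar H(z,t)=H(h_1^{-1}(z),1-t)$) and transitivity (reparametrized composition) are exactly the ``composition of isotopies'' the paper uses implicitly, so there is no gap.
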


\begin{proof}
We can suppose that $x\neq y.$ Then there is a (shortest) segment
$\overline{xy}.$ By Lemma \ref{rho}, the function $\rho(w)$ is
infinite or continuous. In both cases there is a number $r>0$ such
that $r<\rho(w)$ for every point $w\in \overline{xy}.$ Then there is
a finite set $\{x_0=x,x_1, \dots, x_k=y\}$ of points in
$\overline{xy}$ such that $d(x_i,x_{i+1})<r$ for every $i=0,1,\dots,
k-1.$ Let $B_i:=B(x_i,r)$ and $S_i:=S(x_i,r)$. By Theorem
\ref{bbis}, there is an isotopy $H_{i}:B_{i}\times I\rightarrow
B_{i}$ fixing $S_{i}$ such that $H_{i}(\cdot,0)=id_{B_{i}}$ and
$H_{i}(x_i,1)=x_{i+1}$. We extend the isotopies to $X$ requiring
that $H_{i}$ fixes all points outside $B_{i}.$ Now the
``composition'' $H$ of isotopies $H_0,\dots, H_{k-1}$ gives us the
required isotopy.
\end{proof}

\begin{cor}
\label{homo}
Every Busemann $G$-space is topologically homogeneous.
\end{cor}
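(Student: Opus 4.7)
The statement to prove is essentially a direct corollary of the immediately preceding Theorem \ref{hom}, so the proof plan is short and unobstructed. The plan is to take any two points $x, y \in X$, apply Theorem \ref{hom} to obtain an isotopy $H \colon X \times I \to X$ with $H(\cdot, 0) = \mathrm{id}_X$ and $H(x, 1) = y$, and then simply read off the time-one map $h := H(\cdot, 1)$ as the required homeomorphism sending $x$ to $y$.

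The only thing to check is that this $h$ is genuinely a homeomorphism of $X$ onto itself. This is immediate from the definition of an isotopy used throughout the paper: each slice $H(\cdot, t) \colon X \to X$ is a self-homeomorphism, and the time-one slice is no exception. Thus $h$ is a homeomorphism with $h(x) = y$, which is exactly the definition of topological homogeneity recalled in the Introduction. Since $x, y \in X$ were arbitrary, the conclusion follows.

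The main (and only) obstacle is rhetorical rather than mathematical: one should emphasize that this corollary records precisely what Theorem \ref{hom} gives at $t=1$, and that in fact the theorem yields something stronger, namely an \emph{ambient isotopy} from the identity to a homeomorphism carrying $x$ to $y$. This strengthening is what the authors refer to elsewhere by saying the homogenizing homeomorphism can be chosen isotopic to the identity (cf.\ Remark \ref{property}), and it is worth flagging in the write-up even though the statement of Corollary \ref{homo} itself demands only the weaker conclusion.
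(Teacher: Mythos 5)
Your proposal is correct and coincides with the paper's (implicit) argument: Corollary \ref{homo} is read off directly from Theorem \ref{hom} by taking the time-one slice $H(\cdot,1)$ of the isotopy, which is a self-homeomorphism carrying $x$ to $y$. Nothing further is needed.
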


\begin{prop}
\label{anr}
Any finite-dimensional Busemann $G$-space $X$ is an
absolute neighborhood
retract (ANR).
\end{prop}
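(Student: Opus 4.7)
The plan is to invoke a classical theorem of Kuratowski \cite{Kuratowski}, which asserts that any finite-dimensional, separable, metric space that is locally contractible is an absolute neighborhood retract. Since $X$ is finite-dimensional by hypothesis and metric by definition, only separability and local contractibility remain to be established.

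Separability follows from the Finite Compactness axiom (ii) together with completeness of the inner metric recorded in Remark~\ref{property}: these together imply that $X$ is proper (closed bounded sets are compact). Fixing any basepoint $x_0 \in X$, one then writes $X = \bigcup_{n\in\mathbb{N}} B(x_0,n)$, so $X$ is $\sigma$-compact; any $\sigma$-compact metric space is Lindel\"of and hence separable.

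Local contractibility is an immediate consequence of the local cone structure in Proposition~\ref{cone}. For every $x \in X$ and every $0 < r < \rho(x)$, the ball $B(x,r)$ is homeomorphic to the cone $C(S(x,r))$ via a homeomorphism $f$ that carries the center $x$ to the vertex $v$. The standard radial contraction of the cone, $(c(s,\tau),u) \mapsto c(s,(1-u)\tau)$ for $u \in [0,1]$, deformation retracts $C(S(x,r))$ onto $v$; conjugating by $f^{-1}$ yields a contraction of $B(x,r)$ to $x$. Since balls $B(x,r)$ with $0 < r < \rho(x)$ form a neighborhood basis at $x$ (by Lemma~\ref{rho}, $\rho(x) > 0$), $X$ is locally contractible.

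Combining finite-dimensionality, separability, and local contractibility with Kuratowski's theorem gives the conclusion. I do not expect any real obstacle beyond correctly invoking the cited version of Kuratowski's result; the whole argument is little more than a repackaging of the cone structure already provided by Proposition~\ref{cone}.
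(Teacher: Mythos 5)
Your proposal is correct and follows essentially the same route as the paper: the authors likewise deduce local contractibility from the cone structure of Proposition~\ref{cone} and then invoke Kuratowski's theorem that a finite-dimensional, locally contractible separable metric space is an ANR. Your additional verification of separability (via properness and $\sigma$-compactness from the Finite Compactness axiom) is a detail the paper leaves implicit, but it is accurate and does not change the argument.
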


\begin{proof}
Clearly $X$ is arcwise connected. Proposition \ref{cone} implies
that $X$ is locally contractible. Now the statement  follows from
\cite{Kuratowski}.
\end{proof}

\begin{defn}
\label{kos}
\cite{Kosinski,Lee}
$X$ is a Kosi\'nski $r$-space provided that:

(1) $X$ is locally compact, metric, separable and finite-dimensional; and

(2) Each point of $X$ has arbitrarily small closed  neighborhoods
$U$ such that the boundary $Bd(U)$ is a strong deformation retract
of $U-y$ for each interior point $y$ of $U.$
\end{defn}

Note that Kosi\'nski \cite{Kosinski}
assumed that $X$ is compact, but
this condition can be replaced here by
the
local compactness.
As an immediate consequence of Proposition
\ref{cone} and Corollary \ref{homog} we get
the following corollary
(cf. also Theorem 3 on p. 16 in \cite{Busemann6}).

\begin{cor}
\label{kosi}
Every finite-dimensional Busemann $G$-space is a Kosi\'nski $r$-space.
\end{cor}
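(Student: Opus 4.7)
The plan is to verify the two clauses of Definition \ref{kos} in turn, drawing directly on the results already established. Clause (1) is immediate: finite-dimensionality is the hypothesis of the corollary, $X$ is metric by definition, and finite compactness (axiom (ii)) together with Menger convexity gives that every closed bounded set is compact, so $X$ is locally compact and, being a countable union of compact balls about a chosen basepoint, separable.

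For clause (2), I would let $x \in X$ and take as the fundamental system of closed neighborhoods the closed balls $U := B(x,r)$ with $0 < r < \rho(x)$; these shrink to $\{x\}$ as $r \to 0$, so they are arbitrarily small. By Proposition \ref{cone} the homeomorphism $f : U \to C(S)$, where $S := S(x,r) = \mathrm{Bd}(U)$, sends $x$ to the vertex $v$ of the cone and restricts to the identity on the base. On $C(S)$ the formula
\[
R\bigl(c(s,\tau),\,t\bigr) \;=\; c\bigl(s,\,(1-t)\tau + t\bigr), \qquad s \in S,\ \tau \in (0,1],\ t \in I,
\]
defines a strong deformation retraction of $C(S) - \{v\}$ onto the base $c(S \times \{1\})$: at $t=0$ it is the identity, at $t=1$ it lands in the base, and the base is fixed throughout. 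Conjugating by $f$ produces a strong deformation retraction of $U - \{x\}$ onto $\mathrm{Bd}(U)$, which handles the case $y = x$.

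For an arbitrary interior point $y$ of $U$, that is, $y \in U(x,r)$, I would invoke Corollary \ref{homog} to obtain a homeomorphism $h : U \to U$ that is the identity on $\mathrm{Bd}(U) = S$ and satisfies $h(x) = y$. Then $h$ carries the strong deformation retraction of $U - \{x\}$ onto $\mathrm{Bd}(U)$ built above to a strong deformation retraction of $U - \{y\} = h(U - \{x\})$ onto $\mathrm{Bd}(U) = h(\mathrm{Bd}(U))$, completing the verification of clause (2). The only step requiring any care is checking that the cone retraction $R$ is genuinely \emph{strong} (i.e., fixes the base at every time $t$), but this is transparent from the formula; beyond that, the proof is merely an assembly of Proposition \ref{cone} and Corollary \ref{homog}.
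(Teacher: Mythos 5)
Your proposal is correct and follows exactly the route the paper intends: the paper states the corollary as "an immediate consequence of Proposition \ref{cone} and Corollary \ref{homog}," which is precisely the cone-retraction-plus-conjugation argument you spell out. No discrepancies.
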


\begin{thm}
\label{homman}
Every $n$-dimensional Busemann $G$-space $X$
is a $\mathbb{Z}$-homology
$n$-manifold, i.e., for every point $x\in X$,
$H_k(X,X-\{x\};\mathbb{Z})\cong \mathbb{Z}$ if
$k=n$ and $H_k(X,X-\{x\};\mathbb{Z})=0$ if $k\neq n.$
\end{thm}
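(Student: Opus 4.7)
My plan is to reduce the local homology computation to the reduced homology of a small metric sphere via the cone structure of Proposition \ref{cone}, and then invoke the classical Kosi\'nski--Lee theorem. By Corollary \ref{homo} the space is topologically homogeneous, so the groups $H_k(X,X-\{x\};\mathbb{Z})$ are independent of the base point $x\in X$. I would therefore fix $x_0\in X$ and choose $0<r<\rho(x_0)$, setting $B:=B(x_0,r)$, $U:=U(x_0,r)$, and $S:=S(x_0,r)$. By Proposition \ref{cone} there is a homeomorphism $f:B\to C(S)$ carrying $x_0$ to the cone vertex $v$ and restricting to the identity on $S$.

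Next, since $U$ is an open neighborhood of $x_0$ whose closure is $B$, excision of $X\setminus U$ from the pair $(X,X-\{x_0\})$ yields
\[
H_k(X,X-\{x_0\};\mathbb{Z}) \;\cong\; H_k(B,B-\{x_0\};\mathbb{Z}) \;\cong\; H_k\bigl(C(S),\,C(S)-\{v\};\mathbb{Z}\bigr).
\]
The cone $C(S)$ is contractible, and the natural straight-line homotopy deformation retracts $C(S)-\{v\}$ onto $S$. The long exact sequence of the pair therefore collapses to the isomorphism
\[
H_k(X,X-\{x_0\};\mathbb{Z}) \;\cong\; \widetilde{H}_{k-1}(S;\mathbb{Z}).
\]

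It remains to show that $S$ is a $\mathbb{Z}$-homology $(n-1)$-sphere, i.e. that $\widetilde{H}_{k-1}(S;\mathbb{Z})$ is $\mathbb{Z}$ for $k=n$ and trivial otherwise. For this I would appeal to the classical theorem of Kosi\'nski \cite{Kosinski}, extended to the locally compact setting by Lee \cite{Lee}, stating that every $n$-dimensional Kosi\'nski $r$-space is a $\mathbb{Z}$-homology $n$-manifold. By Corollary \ref{kosi} our space $X$ satisfies the hypotheses, and the theorem applies directly. (In effect, the cone formula above is simply the geometric interpretation of the Kosi\'nski--Lee conclusion.)

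The main obstacle is the Kosi\'nski--Lee theorem itself: the cone structure of Proposition \ref{cone} supplies the required local topology and even identifies $S$ as a strong deformation retract of $B-\{x_0\}$, but upgrading this qualitative picture to the sharp local homology vanishing and the rank-one top class requires the dimension-theoretic and \v Cech-cohomological machinery behind \cite{Kosinski,Lee}. Once that black box is accepted, the isomorphism with $\widetilde{H}_{k-1}(S;\mathbb{Z})$ established above provides the cleanest route from the geometric hypotheses of a Busemann $G$-space to the homology-manifold conclusion.
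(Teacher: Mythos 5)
Your proposal is correct and follows essentially the same route as the paper: both arguments rest entirely on Corollary \ref{kosi} together with Lee's theorem on Kosi\'nski $r$-spaces \cite{Lee}, your excision/cone preamble being, as you yourself note, only a geometric restatement of that conclusion rather than an independent step. The one ingredient the paper adds that you omit is Alexandroff's theorem identifying cohomological dimension over $\mathbb{Z}$ with covering dimension for finite-dimensional separable metric spaces, which is what allows the topological dimension $n$ to be fed into Lee's results.
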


\begin{proof}
Alexandroff \cite{Alexandroff} proved that for any finite-dimensional (separable
metric) space its cohomological dimension over the ring $\mathbb{Z}$
coincides with its  topological (i.e. covering) dimension. The space $X$ is arcwise connected, locally
contractible and  by  Corollary
\ref{kosi}
it is a Kosi\'nski $r$-space. 
We can now simply apply results of Lee \cite{Lee}.
\end{proof}

\begin{remark}
Thurston \cite{Thurston}
proved  that any finite-dimensional
Busemann $G$-space is an ANR $\mathbb{Z}$-homology manifold, using more recent 
results of Dydak and Walsh \cite{Dydak-Walsh}.
\end{remark}

\begin{prop}
\label{spheroid}
Let $S:=S(x_0,r)$, where $0<r<  \rho(x_0),$
be any sphere in a Busemann $G$-space $(X,d),$ let $x\in S$ be any
point and denote by $x'\in S$  its (unique) "antipodal" point, i.e.
$d(x,x')=2r$. Then $\{x'\}$ is a strong deformation retract of
$S':=S-\{x\}.$
\end{prop}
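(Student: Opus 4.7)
The plan is to construct an explicit strong deformation retraction by pushing each $y \in S'$ along the geodesic segment to $x'$ and then radially projecting back onto $S$ from $x_0$. By Proposition \ref{cone}, radial scaling along segments from $x_0$ yields, for any $r'' \in (0, \rho(x_0))$, a homeomorphism $S(x_0, r) \cong S(x_0, r'')$ preserving antipodal pairs, so without loss of generality I may assume $2r < \rho(x_0)$, which gives the cone structure on $B(x_0, 2r)$.

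Define the radial projection $\pi \colon B(x_0, 2r) \setminus \{x_0\} \to S$ sending each point to the unique element of $S$ on its segment from $x_0$; this is continuous by Proposition \ref{cone}. For each $y \in S'$, let $\sigma_y \colon I \to B(x_0, 2r)$ parameterize the unique shortest geodesic from $y$ to $x'$ proportionally to arc length; the triangle inequality gives $d(x_0, \sigma_y(t)) \leq r + d(y, x')/2 \leq 2r$, and continuous dependence of $\sigma_y$ on $y$ follows by a standard compactness argument using finite compactness and uniqueness of joins. Set $H(y, t) := \pi(\sigma_y(t))$; then $H(y, 0) = y$, $H(y, 1) = x'$, and $H(x', t) \equiv x'$ (interpreting $\sigma_{x'}$ as the constant path) hold by inspection.

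The two critical verifications are $\sigma_y(t) \neq x_0$ and $\pi(\sigma_y(t)) \neq x$. For the former, $x_0 \in \overline{yx'}$ forces $d(y, x') = d(y, x_0) + d(x_0, x') = 2r$, and then axiom (iv) of Definition \ref{bus} identifies $y$ with the unique antipode of $x'$, namely $x$, contradicting $y \in S'$. For the latter, $\pi^{-1}(x) = \overline{x_0 x} \setminus \{x_0\} \subset \overline{x' x}$, so I must show $\sigma_y(t) \notin \overline{x' x}$ whenever $\sigma_y(t) \neq x'$. Supposing otherwise, local uniqueness of joins forces the subarcs of $\overline{y x'}$ and $\overline{x x'}$ from $\sigma_y(t)$ to $x'$ to coincide, and their common extension past $\sigma_y(t)$ is unique by axiom (iv); parameterizing $\overline{x' x}$ by arc length $u \in [0, 2r]$ from $x'$, the function $|u - r|$ attains the value $r$ only at $u \in \{0, 2r\}$, so the constraint $d(x_0, y) = r$ forces $y \in \{x', x\}$, a contradiction. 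The principal obstacle is this careful bookkeeping with uniqueness of extensions; once cleared, continuity of $H$ on $S' \times I$ and the boundary conditions combine to give the strong deformation retraction of $S'$ onto $\{x'\}$.
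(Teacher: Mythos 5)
Your construction is the same as the paper's: reduce to the case $2r<\rho(x_0)$ via Proposition \ref{cone}, slide each $y\in S'$ along $\overline{yx'}$ toward $x'$, and compose with the radial projection from $x_0$ back onto $S$. You are in fact more careful than the paper about the two verifications that make the homotopy land in $S'$, and your exclusion of $x_0$ from the segments $\overline{yx'}$ via axiom (iv) is correct. However, there is one genuine slip: the identification $\pi^{-1}(x)=\overline{x_0x}\setminus\{x_0\}$ is false. Since the segments $\overline{yx'}$ need not stay inside $B(x_0,r)$, the projection $\pi$ must be defined on all of $B(x_0,2r)\setminus\{x_0\}$, projecting \emph{inward} as well as outward; consequently $\pi^{-1}(x)$ also contains every point $z$ with $x_0-x-z$ and $r<d(x_0,z)\leq 2r$, and such points do not lie on $\overline{x'x}$, which terminates at $x$. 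Your subsequent argument only rules out $\sigma_y(t)\in\overline{x'x}$, so the possibility $x_0-x-\sigma_y(t)$ is left open and the inclusion $H(S'\times I)\subset S'$ is not fully established.

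The gap is fixable with one more application of the betweenness calculus: from $x'-x_0-x$ and $x_0-x-z$ one obtains $x'-x-z$ (transitivity of betweenness, valid in any $G$-space; alternatively, extend $\overline{x'x}$ past $x$ and invoke axiom (iv) to identify the extension with $\overline{x_0z}$ beyond $x$). Hence $d(x',z)=2r+d(x,z)>2r\geq d(x',y)\geq d(x',\sigma_y(t))$ for every point $\sigma_y(t)$ of $\overline{yx'}$, a contradiction. With this case added, your proof is complete and follows the same route as the paper's; note that the paper itself elides both critical verifications, asserting only that the formula defines a homotopy $H:S'\times I\to S'$.
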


\begin{proof}
Let us first consider
$S=S(x_0,r)$,
where
$0<r<\frac 12 \rho(x_0)$.
We may assume
that $X$ is not 1-dimensional. Otherwise
$S=\{x,x'\},$ and there is nothing to prove.
By hypothesis, 
$2r<\rho(x_0).$ Then for every point $z$ in $B(x_0,2r)-\{x_0\}$ the
(unique) segment $\overline{x_0z}$ or its extension to a segment
necessarily intersects the set $S'$ in a unique point, which we
shall denote by $f(z).$
By the condition on
$r,$ any two points $y,z\in S$ are joined in $X$ by a unique segment
$\overline{yz}$ (which does not necessarily lie in the closed ball
$B(x_0,r)$) and these segments continuously depend on their ends. For
every point $y\in S',$ the segment $\overline{yx'}$ does not go
through $x_0.$

As a corollary of
the Triangle inequality, every such
segment $\overline{yx'}$ lies in the ball $B(x_0,2r).$
For a point $y\in S'$ define $h_y(t), t\in
I,$ as the point on the segment $\overline{yx'}$ such that
$d(h_y(t),x')=(1-t)d(y,x').$ Now because of all what was said
before, the formula $H(y,t)=f(h_y(t))$ defines a homotopy
$H:S'\times I \rightarrow S'$ such that $H(\cdot,0)=id_{S'},
H(S',1)=\{x'\}$ and $H(x',t)=x'$ for all $t\in I.$
The proof for $0<r<  \rho(x_0)$
can be completed
by Proposition \ref{cone}.
\end{proof}

See also pp. 17-18 in \cite{Busemann6}.

\begin{thm}
\label{homsphere} Let $S:=S(x_0,r)$, where $0<r< \rho(x_0),$
be any sphere in an $n$-dimensional Busemann $G$-space
$X$. Then $S$ is a 
$\mathbb{Z}$-homology $(n-1)$-manifold and 
has the homology of the $(n-1)$-sphere.
\end{thm}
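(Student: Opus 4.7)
The plan is to exploit three earlier results: (a) Proposition~\ref{cone}, which identifies $B:=B(x_0,r)$ with the cone $C(S)$ and in particular makes $B$ contractible while the punctured ball $B-\{x_0\}$ deformation retracts onto $S$; (b) Theorem~\ref{homman}, which says $H_*(X, X-\{x_0\}; \mathbb{Z})$ is $\mathbb{Z}$ in degree $n$ and zero otherwise; and (c) Proposition~\ref{spheroid}, which says $S-\{x\}$ strong-deformation retracts to its antipode for every $x\in S$. From these three ingredients both parts of the theorem reduce to routine long-exact-sequence bookkeeping.

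First I would pin down the global homology of $S$. Excision applied to the pair $(X, X-\{x_0\})$, with excised open set $X-B$ (whose closure lies inside $X-\{x_0\}$ since $x_0\in\operatorname{int}B$), gives $H_k(B, B-\{x_0\})\cong H_k(X, X-\{x_0\})$, which by Theorem~\ref{homman} equals $\mathbb{Z}$ for $k=n$ and vanishes otherwise. Feeding this into the long exact sequence of the pair $(B, B-\{x_0\})$ and using that $B$ is contractible and $B-\{x_0\}\simeq S$, one obtains boundary isomorphisms $\tilde H_{k-1}(S)\cong H_k(B, B-\{x_0\})$ for all $k$, and hence $\tilde H_*(S)\cong \tilde H_*(S^{n-1})$.

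Second, for the local homology at an arbitrary $x\in S$, I would write down the long exact sequence of the pair $(S, S-\{x\})$. By Proposition~\ref{spheroid}, $S-\{x\}$ is contractible, so $H_*(S-\{x\})$ is concentrated in degree $0$. Substituting the computation of $H_*(S)$ from the previous paragraph, the sequence collapses to isomorphisms $H_k(S,S-\{x\})\cong H_k(S)$ for $k\geq 2$, hence $H_{n-1}(S,S-\{x\})\cong\mathbb{Z}$ and the other groups in degrees $\geq 2$ vanish. In the bottom portion, the inclusion-induced map $H_0(S-\{x\})\to H_0(S)$ is the obvious isomorphism $\mathbb{Z}\to\mathbb{Z}$, which forces $H_0(S,S-\{x\})=0$ and (examining the two- and three-dimensional edge cases) $H_1(S,S-\{x\})=0$ whenever $n\neq 2$; for $n=2$ the same diagram chase gives $H_1(S,S-\{x\})\cong\mathbb{Z}$, matching the expected $(n-1)$-homology-manifold conclusion.

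I do not expect a serious obstacle here. The only technical care needed is verifying the excision hypothesis (already immediate from $x_0\in\operatorname{int}B$) and treating the very small-dimensional cases $n=1$ (where $S$ is two points and the conclusion is trivial) and $n=2$ (where the bottom of the exact sequence must be tracked explicitly). Everything else is an application of earlier results already proved in the paper.
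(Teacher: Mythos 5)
Your proposal is correct and follows essentially the same route as the paper: excision plus Theorem~\ref{homman} to compute $H_*(B,B-\{x_0\})$, the cone structure from Proposition~\ref{cone} to identify $\tilde H_*(S)$ with that relative homology, and Proposition~\ref{spheroid} fed into the long exact sequence of $(S,S-\{x\})$ for the local homology, with the same handling of the $n=1$ and $n=2$ edge cases. No substantive differences to report.
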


\begin{proof}
We shall
use the Eilenberg-Steenrod homology axioms \cite{Eilenberg-Steenrod}.
The case $n=1$ is trivial. So suppose that $n>1.$
Let $B:=B(x_0,r)$ and $B':=B-\{x_0\}.$ Evidently the closure of
$X-B$ is contained in $X-\{x_0\}.$ Then by the Excision axiom,
$H_k(X,X-\{x_0\})$ is isomorphic to $H_k(B,B')$ and so by Theorem
\ref{homman}, the latter group is $\mathbb{Z}$ if $k=n$ and
$0$ if $k\neq n.$ Moreover, as a corollary of Proposition
\ref{cone}, $S$ is a strong deformation retract of $B',$ while $B$
is contractible. Hence by the Homotopy axiom, $H_k(B')\cong H_k(S)$
for all $k;$ $H_k(B)=0$ for $k\neq 0$ and $H_0(B)\cong \mathbb{Z}.$

Consider the following part of the exact homology sequence for the
pair $(B,B')$:
$$\dots\rightarrow H_{k+1}(B)\rightarrow H_{k+1}(B,B')
\rightarrow H_{k}(B')\cong H_{k}(S)\rightarrow H_{k}(B)\rightarrow\dots$$
If $k>0$ then the
first and last terms are $0$ and so
$$H_k(S)\cong H_{k+1}(B,B'),$$
which is nonzero only if $k+1=n$ or $k= n-1$ and $H_{n-1}(S)\cong
H_{n}(B,B')\cong \mathbb{Z}.$ Also $H_{0}(S)\cong \mathbb{Z}$
because $S$ is arcwise connected.  This means that $S$
has the homology of the $(n-1)$-sphere.

For any point $x\in S,$ consider the following part of the 
homology exact sequence for the pair $(S,S-\{x\})=(S,S')$:
$$\dots\rightarrow H_{k+1}(S')\rightarrow H_{k+1}(S)
\rightarrow H_{k+1}(S,S')\rightarrow H_{k}(S')\rightarrow H_k(S)
\rightarrow\dots$$
If $k>0$ then $H_{k+1}(S')\cong H_k(S')=0$ by Proposition
\ref{spheroid}.  Then $H_{k+1}(S,S')\cong H_{k+1}(S)$ which is
nonzero only if $k+1=n-1$ and $H_{n-1}(S,S')\cong H_{n-1}(S)\cong
\mathbb{Z}$ by the statements above. For $k>0$ the
latter two equalities make sense only if $n>2.$ If $k=0$ then the last
arrow is an isomorphism of groups, which are both isomorphic to
$\mathbb{Z}$ because $S$  and $S'$ are arcwise connected by the
argument from the proof of Proposition \ref{spheroid}. Then
$H_1(S,S')\cong H_1(S),$ which is zero if $n>2$ and isomorphic 
to $\mathbb{Z}$ if $n=2.$ The arcwise connectivity of $S$ and $S'$
implies that $H_0(S,S')=0$ by the definition of $H_0(S,S').$ We have thus proved that $S$ is a 
homology $(n-1)$-manifold.
\end{proof}

We get the following immediate corollary (an entirely  different
proof can be found in \cite{Busemann6}):

\begin{cor}
In every finite-dimensional $G$-space $X$, every sphere
$S(x,r)$ of radius $0<r<\rho(x)$ is noncontractible.
\end{cor}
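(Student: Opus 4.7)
The plan is to derive noncontractibility as a direct consequence of the homology computation in Theorem \ref{homsphere}, using the standard fact that the reduced singular homology of a contractible space vanishes in all degrees. Since that theorem has just shown $S(x,r)$ to have the homology of the $(n-1)$-sphere, this already exhibits a nonvanishing reduced homology group, producing the desired contradiction with contractibility.

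More precisely, for $n \geq 2$, Theorem \ref{homsphere} yields $H_{n-1}(S(x,r);\mathbb{Z}) \cong \mathbb{Z}$, which would have to vanish were $S(x,r)$ contractible. For $n = 1$, the same theorem gives that $S(x,r)$ has the homology of $S^0$, so $\tilde{H}_0(S(x,r);\mathbb{Z}) \cong \mathbb{Z} \neq 0$; alternatively, one can observe directly from Menger Convexity and Uniqueness of Extension that $S(x,r)$ consists of exactly two points in this case, and so is disconnected and manifestly noncontractible.

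I do not foresee any genuine obstacle here, since the substance of the argument is already encoded in Theorem \ref{homsphere}; the corollary is essentially a one-line consequence. The only point that requires a moment of care is the degenerate case $n = 1$, where the top-dimensional homology group is $H_0$ and one must phrase the vanishing condition in terms of reduced homology (or connectedness) rather than a top-class argument.
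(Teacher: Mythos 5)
Your argument is correct and is exactly the route the paper intends: the corollary is stated there as an immediate consequence of Theorem \ref{homsphere}, since a contractible space has vanishing reduced homology while $S(x,r)$ has the homology of $S^{n-1}$. Your separate handling of the $n=1$ case via $\tilde{H}_0$ (or the two-point description of the sphere) is a reasonable piece of care that the paper leaves implicit.
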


As a corollary of Theorem \ref{homman}, finite-dimensional Busemann
$G$-spaces also possess the {\it invariance of domain property},
which was first established for manifolds by Brouwer
\cite{Brouwer1,Brouwer2} and then generalized to homology manifolds
by Wilder \cite{Wilder} (cf. V\" ais\" ala \cite{Vaisala} for a
short proof):

\begin{thm}
[Invariance of Domain Theorem] Let $X$ be a finite-dimensional
Busemann $G$-space, and $h: C\rightarrow  D$  a homeomorphism of
 subsets in $X.$ Then $h$ maps ${\text int}(C)$ onto ${\text
int}(D).$ Analogous assertion is true for every sphere $S(x,r)$ if
$0<r<\rho(x).$
\end{thm}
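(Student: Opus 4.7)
The plan is to deduce both assertions directly from the classical Invariance of Domain Theorem for $\mathbb{Z}$-homology manifolds, due to Wilder \cite{Wilder} (see V\"ais\"al\"a \cite{Vaisala} for a short modern proof) --- which is exactly why the authors furnished those citations. The two ingredients that Wilder's theorem demands are already on the table: by Theorem \ref{homman}, any finite-dimensional Busemann $G$-space $X$ is a $\mathbb{Z}$-homology $n$-manifold, and by Theorem \ref{homsphere}, every sphere $S(x,r)$ with $0<r<\rho(x)$ is a $\mathbb{Z}$-homology $(n-1)$-manifold. Hence it is enough to explain how Wilder's result applies in each setting and then read off the conclusion.

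The conceptual heart of Wilder's argument is the following local homological characterization of interior points: if $Y$ is a $\mathbb{Z}$-homology $m$-manifold and $C\subseteq Y$, then a point $x\in C$ belongs to $\mathrm{int}_Y(C)$ if and only if $H_m(C,C-\{x\};\mathbb{Z})\cong \mathbb{Z}$. The easy direction is pure excision: picking an open neighborhood $U$ of $x$ in $Y$ with $U\subseteq C$ gives
$$
H_m(C,C-\{x\})\;\cong\;H_m(U,U-\{x\})\;\cong\;H_m(Y,Y-\{x\})\;\cong\;\mathbb{Z},
$$
where the last isomorphism uses the $\mathbb{Z}$-homology $m$-manifold hypothesis. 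The converse --- that a $\mathbb{Z}$-summand in $H_m(C,C-\{x\})$ forces $x$ to be interior --- is the substantive content of Wilder's theorem and is the step I would import from \cite{Wilder,Vaisala} rather than reprove.

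Granted this characterization, the theorem is immediate. A homeomorphism $h: C\to D$ induces isomorphisms $h_*: H_m(C,C-\{x\})\xrightarrow{\cong} H_m(D,D-\{h(x)\})$ for every $x\in C$, so the local-homology criterion for being an interior point is preserved by $h$ in both directions. Therefore $h(\mathrm{int}(C))=\mathrm{int}(D)$. Applying this verbatim with $(Y,m)=(X,n)$ yields the first assertion, and applying it with $(Y,m)=(S(x_0,r),\,n-1)$, on the strength of Theorem \ref{homsphere}, yields the analogous assertion for spheres.

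The main obstacle is entirely concentrated in the ``hard'' direction of Wilder's local homological criterion; once that is granted, the remainder of the proof is a short formal manipulation that exploits the homology-manifold status already proved earlier in the paper (Theorems \ref{homman} and \ref{homsphere}). A minor technical point worth flagging is that Wilder's result is stated purely homologically and needs no ANR hypothesis on $C$ or $D$, so the sphere case requires no additional verification beyond its being a $\mathbb{Z}$-homology $(n-1)$-manifold.
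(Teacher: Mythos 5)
Your proposal is correct and matches the paper exactly: the paper presents this theorem as an immediate corollary of Theorem \ref{homman} (resp.\ Theorem \ref{homsphere} for the sphere case) together with Wilder's generalization of invariance of domain to homology manifolds \cite{Wilder,Vaisala}, offering no further proof. You have simply made explicit the local-homology characterization of interior points that underlies that citation, which is a reasonable amount of detail to supply.
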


We note that a very different argument for this theorem was given
in \cite{Busemann6}.

\begin{thm}
Let $X$ be a $n$-dimensional Busemann $G$-space where $n\geq 3.$
Then  every sphere $S=S(x,r), 0<r<\rho(x),$ is simply connected.
\end{thm}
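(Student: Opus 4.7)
The plan is to apply van Kampen's theorem to the open cover of $S$ by the complements of two antipodal points. By Proposition \ref{spheroid}, the complement of any single point of $S$ is contractible, so the real task will be to verify that the intersection of two such complements is path-connected; it is here that the hypothesis $n\ge 3$ enters decisively.

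Pick any $p\in S$ and let $p'\in S$ be its antipode in the sense of Proposition \ref{spheroid}. Applying that proposition both to $p$ and to $p'$ (whose antipode is $p$ by the uniqueness of antipodes) shows that $U:=S\setminus\{p\}$ and $V:=S\setminus\{p'\}$ each deformation retract onto a single point, hence are contractible, in particular path-connected and simply connected. They are open in $S$ and $U\cup V=S$, so van Kampen's theorem will yield $\pi_1(S)=\pi_1(U)\ast_{\pi_1(U\cap V)}\pi_1(V)=0$ as soon as we know that $U\cap V=S\setminus\{p,p'\}$ is path-connected.

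The key step is to compute $H_0(U\cap V)$ via the long exact sequence of the pair $(U,U\cap V)$. Excising $\{p\}$ from the pair $(S,V)$ (which is permissible since $p$ lies in the open set $V$) gives
$$H_k(U,U\cap V)\;\cong\;H_k(S,S\setminus\{p'\}).$$
By Theorem \ref{homsphere}, $S$ is a $\mathbb Z$-homology $(n-1)$-manifold, so the right-hand side equals $\mathbb Z$ for $k=n-1$ and vanishes otherwise. Since $n-1\ge 2$, both $H_0(U,U\cap V)$ and $H_1(U,U\cap V)$ vanish, and the relevant tail of the long exact sequence collapses to
$$0\longrightarrow H_0(U\cap V)\longrightarrow H_0(U)\longrightarrow 0.$$
Contractibility of $U$ gives $H_0(U)\cong\mathbb Z$, and hence $H_0(U\cap V)\cong\mathbb Z$; as singular $H_0$ counts path-components, $U\cap V$ has exactly one of them.

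This concludes the proof via van Kampen's theorem. The main potential obstacle is this path-connectedness step for $U\cap V$, which I expect to settle precisely through the homology-manifold property of $S$ from Theorem \ref{homsphere} together with the dimension inequality $n-1\ge 2$; the availability of a well-defined antipode throughout $0<r<\rho(x_0)$ is supplied by Proposition \ref{spheroid}, and if desired one may further reduce to the case $r<\tfrac12\rho(x_0)$ through the cone homeomorphism of Proposition \ref{cone}, which identifies $S(x_0,r)$ with any smaller metric sphere.
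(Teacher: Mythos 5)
Your proof is correct, but it follows a genuinely different route from the paper's. The paper reduces to $r<\rho(x)/2$, takes an arbitrary loop in $S$, approximates it by a broken geodesic loop and projects that back onto $S$ by the radial map $f$ from Proposition \ref{spheroid}; the resulting loop has at most $1$-dimensional image, hence misses a point of $S$ because $\dim S=n-1\ge 2$ by Theorem \ref{homsphere}, and is then contracted using the deformation retraction of $S-\{x\}$ onto the antipode. You instead run van Kampen on the cover of $S$ by the two contractible sets $S-\{p\}$ and $S-\{p'\}$ (both handled by Proposition \ref{spheroid}), and the only nontrivial input is path-connectedness of the doubly punctured sphere, which you extract by excision from the local homology $H_k(S,S-\{p'\})$ of the homology $(n-1)$-manifold $S$ together with the long exact sequence of the pair; the inequality $n-1\ge 2$ enters in exactly the same place, killing the relative groups in degrees $0$ and $1$. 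All the steps check out: the excision hypothesis $\overline{\{p\}}\subset \operatorname{int}(S-\{p'\})$ holds since $p\ne p'$, the antipode of $p'$ is $p$ by uniqueness of antipodes, and $H_0(U\cap V)\cong\mathbb{Z}$ does force a single path component (nonemptiness being automatic). Your argument is shorter and purely formal once Proposition \ref{spheroid} and Theorem \ref{homsphere} are granted, and it sidesteps the geodesic approximation machinery entirely; the paper's argument is more geometric, exhibits the null-homotopy explicitly, and uses the homology-manifold structure only through the dimension bound rather than through the local homology groups at a point. Both proofs ultimately rest on the same two ingredients, so neither is more economical in its prerequisites.
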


\begin{proof}
Since any two spheres $S=S(x,r), S=S(x,r'),0<r, r'<\rho(x),$ are
homeomorphic by Proposition \ref{cone}, we may suppose in the proof
that $0<r<\rho(x)/2.$
It follows from Proposition \ref{spheroid} that it suffices 
to prove that every loop in $S$ is homotopic to a loop whose image
is  a proper subset of $S.$ Let $r_0>0$ be the minimal value of
continuous function $\rho$ (cf. Lemma \ref{rho}) on compact ball
$B(x_0,2r)$ and $l:I\rightarrow S$ any loop in $S.$ For the number
$r_1=\frac{1}{2}\min(r,r_0)$ there is $\delta>0$ such that
$d(l(s),l(s'))<r_1$ if $s,s'\in I, |s-s'|<\delta.$ Take any numbers
$s_0,s_1,\dots, s_m$ such that $s_0=0<s_1<\dots <s_m=1,$
$s_{j+1}-s_j<\delta$ for all $j=0,1,\dots m-1,$ and corresponding
points $y_i=l(s_i), i=0,1,\dots m.$ Then $y_0=y_m.$ By Triangle
inequality and choice of $r_1$ and $\delta,$ for every $j=0,1,\dots
m-1,$ there is unique segment $\overline{y_jy_{j+1}},$ and this
segment lies in $B(x,r+r_1/2).$ By the same reason, for all points
$z\in \overline{y_jy_{j+1}}$ and $l(s),$ where $s\in
\overline{s_j,s_{j+1}},$ there is unique segment $\overline{l(s)z},$
and this segment lies in $B(x,2r).$ There is a loop
$l_0:I\rightarrow B(x,r+r_1/2)$ such that $l_0(s_i)=l(s_i)$ for all
$i=0,1,\dots m$ and the restriction of $l_0$ to every segment
$\overline{s_j,s_{j+1}}; j=0,1,\dots,m-1,$ is a parametrization of
the segment $\overline{y_jy_{j+1}}.$ By arguments above, for every
number $s\in I$ we can define unique path $h_s(t), t\in I,$ in
$B(x,2r)$ such that $h_s(t)$ is the point on unique segment
$\overline{l(s)l_0(s)}$ with condition
$d(h_s(t),l(s))=td(l(s),l_0(s)).$  Now we can define desired
homotopy $H:I\times I\rightarrow S$ by formula $H(s,t)=f(h_s(t)),$
where $f$ is defined in the proof of Proposition \ref{spheroid}.
Indeed, it is clear that the mapping $f$ homeomorphically sends every
segment $\overline{yz}$ with ends $y,z\in S,$ such that $z$ is not
antipodal to $y$ with respect to $x,$ onto its image in $S.$ For this
reason the image of loop $l_1(s)=H(s,1)$ is no more than
1-dimensional because the image of the restriction of $l_1$ to every
segment $\overline{s_j,s_{j+1}},j=0,1,\dots,m-1,$ is equal to
$f(\overline{y_jy_{j+1}})$ which is homeomorphic to the segment
$\overline{y_jy_{j+1}}.$ Since $S$ has topological dimension $\leq
n$ and is a homology $(n-1)$-manifold by Theorem \ref{homsphere},
its topological dimension is $(n-1)\geq 2.$ Thus $l_1(I)\not = S.$
\end{proof}

\begin{remark}
This theorem would also hold for {\it infinite-dimensional}
Busemann $G$-spaces should they exist.
\end{remark}

\section{Local $G$-homogeneity}

In this section we shall introduce some basic terminology and facts:

\begin{defn}
A set $Z$ in a metric space $X$ is said to be \emph{starlike with
respect to $x \in \text{int} (Z)$} if $x$ is  joinable 
with every
point in the boundary $\partial Z$ by unique shortest geodesic
(segment) and $Z$ is the geodesic cone over   $\partial Z$ with cone
point $x$. In particular, $Z = \bigcup \{ \overline{xz} \ | \ z \in
\partial Z\}$  and if $z,z' \in \partial Z$ are different, then
$\overline{xz} \cap \overline{xz'} = \{x\}$.
\end{defn}

It follows from Proposition \ref{cone} that in Busemann $G$-spaces,
all metric balls $B(x,r), 0<r<\rho(x),$ are starlike with respect to their
centers.

\begin{defn} \label{stably starlike}
A set $Z$ in a metric space is said to be \emph{stably starlike} at
a point  $x$ if there is a $\delta > 0$ such that $Z$ is star-like
with respect to any point $y \in B(x,\delta)$.
\end{defn}

\begin{defn}\label{local G-homog}
A metric space $X$ is said to be \emph{locally $G$-homogeneous} if
for every point $x \in X$, there is a radius $\varepsilon > 0$ such
that $\varepsilon< \rho(x)$ and the ball $B(x,\varepsilon)$ is stably starlike at $x$ (or, in other
words, the sphere $S(x,\varepsilon)$ is stably visible at $x$).
\end{defn}

\begin{remark}
\label{convexity} The condition that the ball $B:=B(x,\varepsilon)$
in a Busemann $G$-space $X$ is metrically strongly  convex, i.e. any
two points $y,z\in B$ are joinable
by a
unique segment $\overline{yz}$ in $X$ and
this segment, except maybe for points $y$ and $z$, is contained in
$\text{ int}(B)$, implies the assertion that $B$ is
starlike with respect to every point in $\text{ int}(B)$. As a
corollary, a Busemann $G$-space $X,$ having for each point $x \in X$
a metrically strongly convex closed ball of positive radius with the
center $x,$ is locally $G$-homogeneous.
\end{remark}

The terminology locally $G$-homogeneous was chosen to signify that an
autohomeomorphism of $X$ fixed outside of $B(x,\varepsilon)$ taking
 $x$ to  a nearby point $y$  can be chosen to preserve cone lines in
the sense that if $z \in S(x,\varepsilon)$, then $\overline{xz} \to
\overline{yz}$. Although all Busemann $G$-spaces are topologically
homogeneous, it is unknown whether  all Busemann $G$-spaces
are also locally $G$-homogeneous.

\begin{defn}\label{strong homog}
A space $X$ is said to be \emph{strongly topologically homogeneous}
if for any two points $x,y \in X$ and path $\alpha: [0,1] \to X$
such that $\alpha(0) = x$ and $\alpha(1) = y$, the map $h: \{x\}
\times [0,1] \to X; h(x,t) = \alpha(t)$ is an ambient isotopy, i.e.
there is an isotopy $H:X \times [0,1] \to X$ such that $H|_{\{x\}
\times [0,1]} = h$.
\end{defn}

\section{Topological homogeneity}

In this section we demonstrate the first version of our main result,
the topological homogeneity of sufficiently small metric spheres in
locally $G$-homogeneous Busemann $G$-spaces.

We now define two types of maps that are key to our proof and
establish their continuity.  We begin by citing the following well
known result:

\begin{prop} \label{prop-antipode}
Suppose that $X$ is a Busemann $G$-space and $B(x,r) \subset X,$ where
$0<r<\rho(x)$. Then for
each point $z \in S(x,r'),$ where $0<r'\leq r,$ there exists a unique
point $z' \in S(x,r')$ such that $z - x - z'$.
\end{prop}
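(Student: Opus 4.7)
The plan is to produce $z'$ by extending the unique shortest segment from $z$ through $x$ by a further distance of $r'$, and then to invoke axiom (iv) directly for uniqueness.

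First I would observe that since $r'\le r<\rho(x)$, both $z$ and $x$ lie in $U(x,\rho(x))$, so the Local Uniqueness of Joins listed in Remark \ref{property} supplies a unique shortest segment $\overline{zx}$ of length $r'$. I would then apply axiom (iii) at $w=x$ to the pair $(z,x)$ to obtain a point $y\in U(x,\rho(x))\setminus\{z,x\}$ with $d(z,x)+d(x,y)=d(z,y)$, so that $z$-$x$-$y$ and the segment has been prolonged past $x$. If $d(x,y)\ge r'$, then Menger convexity (axiom (i)) together with Local Uniqueness of Joins produces a point $z'$ on the prolonged geodesic with $d(x,z')=r'$, and then $d(z,z')=d(z,x)+d(x,z')=2r'$, giving $z$-$x$-$z'$ with $z'\in S(x,r')$. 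If $d(x,y)<r'$, I would iterate axiom (iii) along the extended geodesic; Lemma \ref{rho} ensures that $\rho$ is continuous and hence bounded below by a positive constant on the compact arc of prospective extension points inside $\overline{U(x,\rho(x))}$, so each application of axiom (iii) prolongs the geodesic by at least a uniform positive amount, and finitely many steps suffice to reach length $r'$ past $x$.

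For uniqueness, suppose $z''\in S(x,r')$ also satisfies $z$-$x$-$z''$. Then $d(z,x)+d(x,z'')=d(z,z'')$ together with $d(x,z')=r'=d(x,z'')$ puts us in exactly the hypothesis of axiom (iv) applied to the extensions $z_1=z'$, $z_2=z''$ of the pair $(z,x)$, yielding $z''=z'$.

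The main obstacle is the prolongation step: axiom (iii) only guarantees a single local extension of unspecified length, so obtaining an extension of exactly the prescribed length $r'$ requires either the iterative argument sketched above (using the continuity of $\rho$ from Lemma \ref{rho} to guarantee a uniform lower bound on each extension) or an appeal to the general prolongability of geodesics in Busemann $G$-spaces, itself a standard consequence of axioms (ii)--(iv). Everything else in the argument is a direct unpacking of the axioms.
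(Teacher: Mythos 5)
Your proof is correct and, on the point the paper actually argues, identical to it: the paper's entire proof is the one-line appeal to the Uniqueness of Extension axiom (iv of Definition \ref{bus}) for uniqueness, with existence of the extension left implicit as standard prolongability of geodesics, which is exactly the structure of your argument. The only imprecision is your claim that each application of axiom (iii) prolongs the geodesic by a \emph{uniform} positive amount -- the axiom gives no lower bound on the length of a single extension, so the iteration should instead be packaged as an open-and-closed argument on the set of achievable extension lengths $t\in[0,r']$ (open by axiom (iii) applied at $w=x$, closed by finite compactness), but this is a repair of wording rather than of substance, and you correctly flag the fallback to general prolongability.
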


\begin{proof}
This follows directly from the uniqueness of extension property.
\end{proof}

The point $z'$ from Proposition \ref{prop-antipode} is called the
\emph{antipode}  of $z$ in $S(x,r')$. We now define the antipodal map
for $B(x,r)$.

\begin{defn}[Antipodal Map] \label{def-antipode}
Suppose that $X$ is a  Busemann $G$-space and $B(x,r) \subset X,$ where
$0<r<\rho(x)$. Then the \emph{antipodal map $\Phi:B(x,r) \to B(x,r)$} is
defined so that $\Phi(x)=x$ and for each point $z \in S(x,r'),$ where $0<r'\leq r$,
$\Phi(z) = z'$, where $z'$ is  the unique antipode of $z$ in
$S(x,r')$.
\end{defn}

In the continuity arguments that follow, we shall use the following
extensively, to show that a map is continuous:  A map $f: X \to Y$
between compact metric spaces is continuous if and only if for every
point $x \in X$ and every
sequence $\{x_n\} \subset X$ such that $x_n
\to x$ and $f(x_n) \to y^*$,
one gets
$y^* = f(x)$.

\begin{prop}
The antipodal map $\Phi:B(x,r) \to B(x,r)$
from Definition~\ref{def-antipode} is a
homeomorphism.
\end{prop}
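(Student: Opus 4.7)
The plan is to exhibit $\Phi$ as a continuous involution on a compact Hausdorff space, which then automatically yields a homeomorphism. First I would observe that $\Phi \circ \Phi = \mathrm{id}_{B(x,r)}$: if $z \in S(x,r')$ and $z' = \Phi(z)$ is its antipode in $S(x,r')$, then $z'-x-z$ exhibits $z$ as the antipode of $z'$ in the same sphere, so $\Phi(z') = z$. Together with $\Phi(x) = x$, this makes $\Phi$ a self-inverse bijection. Since $B(x,r)$ is compact by the finite compactness axiom, it will be enough to establish that $\Phi$ is continuous; then $\Phi^{-1} = \Phi$ is also continuous, and a continuous bijection between compact Hausdorff spaces is a homeomorphism.

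For continuity, I will use the sequential criterion quoted in the excerpt. Fix $z \in B(x,r)$ and a sequence $z_n \to z$ with $\Phi(z_n) \to y^*$; I need $y^* = \Phi(z)$. The case $z = x$ is immediate: by construction $d(x,\Phi(z_n)) = d(x,z_n) \to 0$, so $y^* = x = \Phi(x)$.

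For $z \neq x$, the betweenness relation $z_n - x - \Phi(z_n)$ means
\[
d(x,\Phi(z_n)) = d(x,z_n) \qquad \text{and} \qquad d(z_n,x) + d(x,\Phi(z_n)) = d(z_n,\Phi(z_n)).
\]
Passing to the limit (the distance function is continuous), one gets $d(x,y^*) = d(x,z) > 0$ and $d(z,x) + d(x,y^*) = d(z,y^*)$; equivalently, $y^*$ lies on the sphere $S(x,d(x,z))$ and satisfies $z - x - y^*$. By Proposition~\ref{prop-antipode} the antipode of $z$ in $S(x,d(x,z))$ is unique, so $y^* = \Phi(z)$, as required.

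There is no real obstacle here: everything rests on the uniqueness of extension axiom (already packaged in Proposition~\ref{prop-antipode}) plus continuity of the metric. The only minor care needed is keeping track of the radii of the concentric spheres in which the antipodal pairs live; since the radii are exactly $d(x,z_n) \to d(x,z)$, the limit argument respects the sphere structure automatically.
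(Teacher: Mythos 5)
Your proof is correct and follows essentially the same route as the paper's: both verify continuity via the sequential criterion, passing the betweenness and equidistance relations for the antipodal pairs to the limit and invoking uniqueness of the antipode, and both conclude by noting $\Phi^{-1}=\Phi$. The only cosmetic difference is that you treat the case $z=x$ separately and state the involution property up front, while the paper leaves these implicit.
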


\begin{proof}
Suppose that $\{z_n\}  \subset B(x,r)$
is a sequence such
that $z_n \to z$
and $\Phi(z_n) \to z^* $.
Let $r' = d(x,z)$ and $r_n = d(x,z_n)$.
Then $r_n \to r'$.
Note that
$$d(z_n,x)+d(x,\Phi(z_n))
= d(z_n, \Phi(z_n))= 2r_n$$
which, by continuity of the
distance function, implies
that $$d(z,x)+d(x,z^*)
= d(z, z^*) = 2r'.$$
However, we also have
$$d(z,x)+d(x,\Phi(z))  = d(z,\Phi(z))= 2r'.$$ Moreover,
$$d(x,z^*) =
d(x,\Phi(z)) = r'.$$ By uniqueness of antipodes (cf. Proposition \ref{prop-antipode}), $z^* = \Phi(z)$.
Therefore, $\Phi$ is continuous.

Since $\Phi^{-1} = \Phi$, it follows that $\Phi$ is indeed a
homeomorphism.
\end{proof}

The following projection map will also be key in defining our
homogeneity homeomorphism.

\begin{defn}[Projection Map] \label{projection}
Suppose that $X$ is a Busemann $G$-space and $T,Z \subset X$ are compact star-like
sets with respect to $x \in  {int}(T) \cap {int}(Z)$. Define the
\emph{projection map $\psi: Z - \{x\} \to
\partial T$} such that for each
point $z \in Z - \{x\}$, $\psi(z)=t$, where $t$ is the unique point
of $\partial T$ so that one of $x-t-z$, $t=z$, or $x-z-t$ holds.  We
say that $\psi$ is \emph{centered at $x$}.
\end{defn}

The existence and uniqueness of $t$ in Definition
\ref{projection} easily follows from the definition of star-like set and inclusion
$x \in  {int}(T) \cap {int}(Z)$ .  The cases $x-t-z$, $t=z$, and $x-z-t$ correspond
the cases $d(x,t) < d(x,z)$, $d(x,t) = d(x,z)$, and $d(x,t)>
d(x,z)$, respectively.

\begin{prop}
The projection map $\psi: Z - \{x\} \to
\partial T$
from Definition~\ref{projection} is
continuous.  Moreover, the restriction map $\psi |_{\partial Z} :
\partial Z \to
\partial T$ is a homeomorphism.
\end{prop}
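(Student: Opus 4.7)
The plan is to prove continuity of $\psi$ via the sequence criterion, and then deduce that $\psi|_{\partial Z}$ is a homeomorphism from continuity, bijectivity, and compactness.

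For continuity, fix $z \in Z - \{x\}$ and let $z_n \to z$ in $Z - \{x\}$. Since $\partial T$ is compact, it suffices to show that every convergent subsequence of $\{\psi(z_n)\}$ has limit $\psi(z)$; so suppose $\psi(z_n) \to t^*$. For each $n$, the three points $x$, $\psi(z_n)$, $z_n$ are collinear on a common geodesic emanating from $x$, and a direct inspection of the three cases $x - \psi(z_n) - z_n$, $\psi(z_n) = z_n$, and $x - z_n - \psi(z_n)$ shows that in all three
\[
d(\psi(z_n), z_n) = \bigl| d(x, z_n) - d(x, \psi(z_n)) \bigr|.
\]
Passing to the limit via continuity of $d$ gives $d(t^*, z) = |d(x,z) - d(x, t^*)|$, which in turn forces exactly one of $x - t^* - z$, $t^* = z$, or $x - z - t^*$ to hold. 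Since $x \in \text{int}(T)$, one has $d(x, \psi(z_n)) \geq \text{dist}(x, \partial T) > 0$, so $t^* \neq x$, and $t^* \in \partial T$ by closedness. The uniqueness of geodesic extension (axiom (iv)) combined with Proposition \ref{cone} applied in a small ball around $x$ then forces $t^*$ to equal the unique intersection of the geodesic ray from $x$ through $z$ with $\partial T$, namely $\psi(z)$.

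For the restriction $\psi|_{\partial Z} : \partial Z \to \partial T$: continuity is immediate. Bijectivity holds because both $Z$ and $T$ are star-like at $x$, so each maximal geodesic arc from $x$ meets $\partial Z$ and $\partial T$ in exactly one point each, and $\psi$ simply matches these two crossings along the same ray; its inverse is the analogous projection $T - \{x\} \to \partial Z$ restricted to $\partial T$. As $\partial Z$ is compact and $\partial T$ is Hausdorff, this continuous bijection is a homeomorphism.

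The main obstacle is the limit argument in the continuity proof: one must handle the three betweenness configurations uniformly via the displayed distance identity (so that the case may vary along the subsequence without spoiling the argument) and then invoke uniqueness of geodesic extension to place the limit point on the correct ray through $x$.
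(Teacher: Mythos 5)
Your proposal is correct and follows essentially the same route as the paper: sequential continuity using compactness, passing to the limit in the betweenness relations $x-t_n-z_n$ / $t_n=z_n$ / $x-z_n-t_n$, and concluding by the uniqueness of the point of $\partial T$ in one of these relations with $x$ and $z$; the homeomorphism claim is handled in both by observing that the inverse is the analogous projection $T-\{x\}\to\partial Z$. The only (minor, and arguably cleaner) difference is that you merge the three betweenness cases into the single identity $d(\psi(z_n),z_n)=|d(x,z_n)-d(x,\psi(z_n))|$, whereas the paper splits the sequence into the subsets $L$ and $M$ and argues according to which is finite.
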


\begin{proof}
By hypothesis, $T,Z$ are starlike sets with respect
to $x$. For any $z \in \partial Z - \{ x \}$, $\psi(z)$ is the
unique point $t \in \partial T$ so that one of $x-t-z$, $t=z$, or
$x-z-t$ holds.

We shall now show the continuity of $\psi$. In particular, we shall
show that $\psi$ is continuous on the restriction to any compact set
$Z_\delta = \overline{Z - B(x,\delta)}$, where $B(x,\delta) \subset
int(Z)$ and $\delta>0$. Suppose that there is a sequence $\{z_n\} \subset Z_\delta$
such that $z_n \to z$ and $\psi(z_n) = t_n \to t^*$. By the
compactness of $T$, and hence $\partial T$, $t^* \in \partial T$.

For each $n$, one of $x-t_n-z_{n}$ or $x-z_{n}-t_n$ is the case. Let
$$L = \{ z_{n} \ | \ x-z_{n}-t_n \}$$ and $$M = \{ z_{n} \ | \
x-t_n-z_{n}\}.$$

If $L$ is finite, it follows from the continuity of the distance
function and the relation $d(x,t_n) + d(t_n,z_n) = d(x,z_n)$ for
large $n$,  that $x - t^* - z$ or $t^*=z$. If $M$ is finite, then
$d(x,z_n) + d(z_n,t_n) = d(x,t_n)$ for large $n$,  so that $x - z -
t^* $ or $z=t^*$. If neither $L$ nor $M$ is finite, then $z=t^*$
must be the case. However, $t$ is the unique point of $\partial T$
so that one of $x-t-z$, $t=z$, or $x-z-t$ holds. Hence $t^* = t$.
Therefore $\psi$ is continuous.

Note that both $\psi |_{\partial Z}$ and $\psi|_{\partial Z}^{-1}$
are well-defined and 1-1 by our choice of $Z$ and $T$. Since $\psi$
is continuous, $\psi|_{\partial Z}$ is also continuous. The map
$\psi|_{\partial Z}^{-1}$ is the restriction of the projection map
$\phi: T-{x} \to \partial Z$ to $\partial T$.   Thus the continuity
of $\psi |_{\partial Z}^{-1}$ also follows from a similar argument
as above. Therefore $\psi|_{\partial Z}$ is a homeomorphism.
\end{proof}

\begin{thm}
In a locally $G$-homogeneous Busemann $G$-space $X$, every two
spheres $S(x,r(x))$ and $S(y,r(y))$, of radii $0<r(x)<\rho(x)$ and
$0<r(y)<\rho(y),$ respectively are homeomorphic.
\end{thm}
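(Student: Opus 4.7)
The plan is to reduce the theorem to a local homeomorphism statement between spheres centered at nearby points, and then to chain these local homeomorphisms along a shortest segment joining $x$ to $y$.

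For the reduction, the cone structure of Proposition \ref{cone} identifies $B(z, r)$ with the cone over $S(z, r)$ in such a way that each concentric sphere $S(z, r')$ with $0 < r' \le r$ is carried onto the cross section $c(S(z,r) \times \{r'/r\})$, which is homeomorphic to $S(z, r)$. Hence any two concentric spheres of radii in $(0, \rho(z))$ are mutually homeomorphic, and it suffices to produce radii $\varepsilon(x) < \rho(x)$ and $\varepsilon(y) < \rho(y)$ for which $S(x, \varepsilon(x)) \cong S(y, \varepsilon(y))$.

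For the local step, local $G$-homogeneity supplies at each $z \in X$ radii $\varepsilon_z < \rho(z)$ and $\delta_z \in (0, \varepsilon_z)$ such that $B(z, \varepsilon_z)$ is starlike with respect to every $w \in B(z, \delta_z)$. For such $w$, choose $r_w$ with $0 < r_w < \varepsilon_z - d(z, w)$ and $r_w < \rho(w)$; then both $B(w, r_w)$ and $B(z, \varepsilon_z)$ are compact starlike sets with respect to the common interior point $w$. Applying Definition \ref{projection} with $T = B(z, \varepsilon_z)$ and $Z = B(w, r_w)$, the boundary restriction of the projection map yields a homeomorphism $\psi|_{S(w, r_w)} \colon S(w, r_w) \to S(z, \varepsilon_z)$. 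Combined with the concentric-sphere reduction at $w$, this gives $S(w, \varepsilon_w) \cong S(z, \varepsilon_z)$.

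For the chain argument, I join $x$ to $y$ by a shortest segment $\gamma \colon [0,1] \to X$, which exists by the Existence of Geodesics property recorded in Remark \ref{property}. The image $K = \gamma([0,1])$ is compact, so from local $G$-homogeneity and a standard Lebesgue number argument applied to the open cover $\{B(z, \delta_z/2) : z \in K\}$, one extracts a finite chain $x = z_0, z_1, \ldots, z_n = y$ of points along $\gamma$ with $z_{i+1} \in B(z_i, \delta_{z_i})$ for every $i$. The local step provides $S(z_i, \varepsilon_{z_i}) \cong S(z_{i+1}, \varepsilon_{z_{i+1}})$ for each $i$, and composing along the chain yields $S(x, r(x)) \cong S(x, \varepsilon_{z_0}) \cong \cdots \cong S(y, \varepsilon_{z_n}) \cong S(y, r(y))$.

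The principal obstacle is organizing the parameters $\varepsilon_z, \delta_z, r_w$ so that the hypotheses of Definition \ref{projection} are verified at every step of the chain; in particular, $r_w$ must remain strictly positive while $B(w, r_w)$ sits strictly inside the stably starlike ball $B(z, \varepsilon_z)$. This is controlled by the triangle inequality together with Lemma \ref{rho}, which ensures that $\rho$ (and therefore the permissible range of $\varepsilon_z$) is bounded below on the compact set $K$, so that the chain can be taken uniformly fine.
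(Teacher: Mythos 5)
Your argument is correct and follows essentially the same route as the paper: a local step producing a homeomorphism between a small sphere about a nearby point and the stably starlike sphere via the boundary restriction of a projection map, combined with the identification of concentric spheres, and then a chaining/connectedness argument to pass from nearby centers to arbitrary ones. The paper compresses the chaining into the single sentence ``it suffices to show the result for each $y\in B(x,\delta)$'' and uses two projection maps $\psi_1,\psi_2$ where you use one projection map plus the cone-structure identification, but these are the same mechanism.
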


\begin{proof}  It suffices to show that for every $x\in X$ there is a $\delta > 0$
such that the result is true for each $y \in B(x,\delta)$.  Let
$B(x,\epsilon)$ be the ball promised by the definition of locally
$G$-homogeneous and $\delta
>0$ ($\delta < \epsilon$) be the value promised by the definition of stably starlike. Then for any $y
\in B(x,\delta)$ we have the desired homeomorphism to be the
composition of homeomorphisms
$$S(x,r(x)) \overset{\psi_1}{\to} S(x,\epsilon)
\overset{\psi_2}{\to} S(y,r(y))$$  where $\psi_1$ is the projection
map centered at $x$ and $\psi_2$ is the projection map centered at
$y$.

\end{proof}

We are now ready to prove a weaker version of our first main theorem.

\begin{thm} \label{Main Thm}
Suppose that $X$ is a locally $G$-homogeneous Busemann $G$-space. Then any metric sphere
$S(x,r), 0<r<\rho(x),$ is topologically homogeneous.
\end{thm}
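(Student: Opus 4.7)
The plan is to reduce the theorem to the case of a single sphere lying inside the stably starlike ball provided by local $G$-homogeneity, establish local homogeneity of that sphere via a one-parameter family of ``shifted antipodal'' involutions, and finally globalize by a chain of such self-homeomorphisms along a path. By Proposition~\ref{cone}, the radial projection $\psi_{x}$ gives a canonical homeomorphism between any two concentric spheres $S(x,r),S(x,r')$ with $0<r,r'<\rho(x)$, so it suffices to prove topological homogeneity for the single sphere $S:=S(x,\varepsilon)$, where $B:=B(x,\varepsilon)$ is the ball promised by local $G$-homogeneity at $x$ and $\delta>0$ is its stability radius. Shrinking $\varepsilon$ if necessary, I may arrange that $2\varepsilon<\rho(x')$ for every $x'\in B$, so that any two points of $B$ are joined by a unique segment that depends Hausdorff-continuously on its endpoints.

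For each $y\in B(x,\delta)$, the cone structure of $B$ with vertex $y$ lets me define a continuous involution $\Phi_{y}:S\to S$: for $p\in S$, extend the segment $\overline{py}$ past $y$ along its geodesic continuation until it meets $\partial B$, and set $\Phi_{y}(p)$ to be that intersection point. Uniqueness follows from starlikeness of $B$ with respect to $y$ (no two segments from $y$ to $\partial B$ meet except at $y$), and continuity from local uniqueness of joins together with continuous dependence of segments on endpoints. When $y=x$, $\Phi_{x}$ is precisely the antipodal map of Definition~\ref{def-antipode}. The composition $\Psi_{y}:=\Phi_{y}\circ\Phi_{x}:S\to S$ is then a self-homeomorphism, coinciding with $\mathrm{id}_{S}$ at $y=x$ and varying continuously with $y$.

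The heart of the argument is the local-homogeneity claim: for each fixed $p\in S$ there is a neighborhood $V_{p}\subset S$ of $p$ such that for every $q\in V_{p}$ there exists $y\in B(x,\delta)$ with $\Psi_{y}(p)=q$. Setting $p':=\Phi_{x}(p)$, I observe that any $y$ lying strictly between $p'$ and $q$ on the segment $\overline{p'q}$ satisfies $\Phi_{y}(p')=q$ by the uniqueness of the geodesic extension through $y$, hence $\Psi_{y}(p)=q$. So the claim reduces to showing that $\overline{p'q}$ passes through $B(x,\delta)$ once $q$ is close enough to $p$. But $\overline{p'p}$ contains $x$ by the very definition of $p'$, so the Hausdorff-continuity of $q\mapsto\overline{p'q}$ yields $d(x,\overline{p'q})\to 0$ as $q\to p$; picking $q$ close enough that this infimum is less than $\delta$ and taking $y$ to realize it produces a valid $y\in B(x,\delta)$ lying strictly between $p'$ and $q$ (since $d(x,p')=d(x,q)=\varepsilon>\delta$, forcing $y\ne p',q$).

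Global homogeneity then follows from a standard chain argument: since $S$ is arcwise connected (by Proposition~\ref{spheroid} together with Theorem~\ref{homsphere}), any two points $p,q\in S$ can be joined by a path, covered by finitely many local-homogeneity neighborhoods $V_{p_{i}}$, and composing the associated self-homeomorphisms $\Psi_{y_{i}}$ in sequence yields one self-homeomorphism of $S$ mapping $p$ to $q$. The main technical obstacle is verifying the continuity assertion ``$d(x,\overline{p'q})\to 0$ as $q\to p$'' carefully; this requires that segments between points of $B$ are unique and depend Hausdorff-continuously on their endpoints, which is the reason for shrinking $\varepsilon$ in the reduction step so that Proposition~\ref{cone}'s continuity of segments applies uniformly to both endpoints.
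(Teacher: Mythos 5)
Your argument is in substance the paper's own proof. Your chord--extension involution $\Phi_y$ is exactly the paper's composition $\psi^{-1}\circ\Phi_2\circ\psi$ (project from $S(x,\varepsilon)$ to the auxiliary sphere $S(m,\gamma)$ centered at $m$, take the antipode there, project back), so your $\Psi_y=\Phi_y\circ\Phi_x$ coincides with the paper's four--fold composition, with your point $y\in\overline{p'q}\cap B(x,\delta)$ playing the role of the paper's midpoint $m$ of $\overline{y'z}$; the globalization by chaining self--homeomorphisms along a path in the arcwise connected sphere is likewise the same. You are somewhat more explicit than the paper about why a suitable $y$ close to $x$ exists on $\overline{p'q}$, which is a virtue, but the mechanism and the decomposition are identical.

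The one step I would not let stand is ``shrinking $\varepsilon$ if necessary so that $2\varepsilon<\rho(x')$ for every $x'\in B$.'' Definition~\ref{local G-homog} supplies stable starlikeness of $B(x,\varepsilon)$ only for the single radius $\varepsilon$, and starlikeness with respect to $y$ does not automatically pass to smaller concentric balls: a cone segment $\overline{ys}$ with $s\in S(x,\varepsilon)$ may cross a smaller sphere $S(x,\varepsilon'')$ more than once, since metric balls in a Busemann $G$-space need not be convex (the Stadium space of the paper has no convex balls of positive radius); this is precisely why Definition~\ref{unif} must demand starlikeness on a whole interval of radii. Your construction of $\Phi_y$ rests on starlikeness of the working ball with respect to every $y\in B(x,\delta)$, so you must keep the original $\varepsilon$ (by Proposition~\ref{cone} it suffices anyway to treat that one sphere), and the key limit $d(x,\overline{p'q})\to 0$ as $q\to p$ must then be justified without assuming uniqueness of segments of length close to $2\varepsilon$; what is really needed is that every segment from $p'$ to $p$ passes through $x$, i.e.\ that the diametral segment $\overline{p'x}\cup\overline{xp}$ is the unique segment joining $p'$ to $p$. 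To be fair, the paper's own proof relies tacitly on the very same fact (its midpoint $m$ of $\overline{y'z}$ must converge to $x$ as $z\to y$), so this is a shared, repairable subtlety rather than a defect of your route alone --- but the shrinking manoeuvre as written trades one unverified hypothesis for another and should be removed or justified.
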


\begin{proof}
It suffices to show that $S(x,\varepsilon)$ is homogeneous for
sufficiently close points.  Without loss of generality, choose $y$
sufficiently close to $z$ so that $B(x,\varepsilon)$ is starlike
with respect to the midpoint $m$ of $\overline{y'z}$. Let $\gamma=
\frac 12 d(y,y')$. The desired map sequence provides a homeomorphism
taking $y$ to $z$, where $\Phi_i$ are antipodal maps and $\psi$ is a
projection map centered at $m$.

$$S(x,\epsilon) \overset{\Phi_1}{\to}
S(x,\epsilon)\overset{\psi}{\to} S_\gamma(m)\overset{\Phi_2}{\to}
S_\gamma(m)\overset{\psi^{-1}}{\to} S(x,\epsilon).$$
\end{proof}

\section{Strong Homogeneity}

In this section we shall show that small metric spheres in
locally $G$-homogeneous Busemann $G$-spaces $X$ are in fact,
strongly homogeneous. We shall call a sphere $S(x,r)\subset X$
\textit{sufficiently small} if $r< \rho(x)$ 
(cf. Lemma~\ref{rho}).

\begin{defn}  Let $X$ be a Busemann $G$-space.
Then
$\Omega \subset X$ is said to be
a \emph{fundamental
region in $X$} provided
that:
\begin{enumerate}
\item $\Omega$ is an open region with compact closure; and
\item For any closed metric ball $B(x,r) \subset \Omega,$
$r<\rho(x).$
\end{enumerate}
\end{defn}

Each point in a Busemann $G$-space $X$ is contained in
a fundamental region. For example, one can easily prove that
every open ball $U(x,r)\subset X,$ where $0<r<\rho(x)$ is a
fundamental region.

We shall begin with some continuity theorems.

\begin{thm}
Suppose that $X$ is a  Busemann $G$-space and $\Omega \subset X$ is
a fundamental region.  Let $$\hat{\Omega}= \{(a,r,x) \in X \times
\mathbb{R}\times X\ | \  x\in B(a,r) \subset \Omega\}$$ and
$\Phi[a,r]: B(a,r) \to B(a,r)$ is an antipodal map.  Then the map
$$f: \hat{\Omega} \to X;\  f(a,r,x): = \Phi[a,r](x)$$  is continuous.
\end{thm}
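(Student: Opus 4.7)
The plan is to verify sequential continuity directly, invoking compactness of $\overline{\Omega}$ together with the purely metric characterization of the antipode. Suppose $(a_n,r_n,x_n) \to (a,r,x)$ in $\hat{\Omega}$ and set $y_n := \Phi[a_n,r_n](x_n)$, $y := \Phi[a,r](x)$. By Definition~\ref{def-antipode}, for each $n$ the point $y_n$ lies in $B(a_n,r_n) \subset \Omega$ and is characterized by
$$d(a_n,y_n) = d(a_n,x_n) \quad\text{and}\quad d(x_n,a_n)+d(a_n,y_n) = d(x_n,y_n).$$
Since $\overline{\Omega}$ is compact, the sequence $\{y_n\}$ has convergent subsequences, and by the sequential criterion for continuity recalled just before Proposition~\ref{prop-antipode} it will be enough to show that every such subsequential limit coincides with $y$.

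Next I would fix an arbitrary subsequence $y_{n_k} \to y^*$ and pass to the limit in the two displayed equalities, using continuity of $d$ together with $a_n \to a$ and $x_n \to x$, to obtain
$$d(a,y^*) = d(a,x) \quad\text{and}\quad d(x,a)+d(a,y^*) = d(x,y^*).$$
In the degenerate case $x = a$, both sides of the first equality collapse to $0$, so $d(a_n,y_n) \to 0$ and $y_n \to a = \Phi[a,r](a) = y$ with no further work needed. If instead $x \neq a$, the first equality forces $y^* \neq a$, so the second equality is precisely the betweenness relation $x - a - y^*$; together with $d(a,y^*) = d(a,x)$ this exhibits $y^*$ as an antipode of $x$ in the sphere $S(a, d(a,x))$. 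Since $d(a,x) \le r < \rho(a)$ (because $B(a,r) \subset \Omega$ and $\Omega$ is a fundamental region), Proposition~\ref{prop-antipode} applies at $a$ with radius $d(a,x)$ and its uniqueness clause forces $y^* = \Phi[a,r](x) = y$.

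Because every convergent subsequence of $\{y_n\}$ has limit $y$ and the $y_n$ lie in the compact set $\overline{\Omega}$, the full sequence $\{y_n\}$ converges to $y$, proving continuity of $f$ at $(a,r,x)$. I expect no substantive obstacle in this plan; the only points requiring care are checking that the subsequential limit $y^*$ actually meets the hypotheses of the uniqueness clause of Proposition~\ref{prop-antipode}, and handling the degenerate case $x=a$ separately. Note that the parameter $r$ enters the argument solely via the defining constraint $B(a_n,r_n)\subset\Omega$ of $\hat{\Omega}$---it is precisely this constraint that traps the $y_n$ in the compact set $\overline{\Omega}$ and legitimates the subsequence extraction---whereas the value $f(a,r,x)$ itself does not actually depend on the third coordinate.
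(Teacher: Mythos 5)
Your proof is correct and follows essentially the same route as the paper's: characterize the antipode by the two metric identities, pass to the limit using continuity of $d$, and conclude by uniqueness of extension (Proposition~\ref{prop-antipode}). You are in fact slightly more careful than the paper, since you justify the convergence of $\{y_n\}$ via compactness of $\overline{\Omega}$ and treat the degenerate case $x=a$ explicitly, and you correctly note that $f$ depends on $r$ only through the constraint $B(a,r)\subset\Omega$.
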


\begin{proof}
Let ${\Omega^*}$
be a compact subset of
$\Omega$. Let
$$
\hat{\Omega}^*= \{(a,r,x) \in X \times \mathbb{R}\times X\ | \  B(a,r)
\subset \Omega^*\}.
$$
It suffices to show that $f$ is continuous on
$\hat{\Omega}^*$.

Suppose there is a sequence $(a_n,r_n,x_n) \to (a,r,x)$ in
$\hat{\Omega}^*$ and $f(a_n,r_n,x_n) \to x^*$. Without loss of
generality we may choose $r'_n$ so that $r_n \leq r'_n$ and $B(a,r)
\subset B(a_n,r'_n) \subset \Omega$ (this can be accomplished if the
sequence  $\{a_n\}$ is modified to contain only points very close to
$a$). Define $x'_n = f(a_n,r'_n,x_n)$.
Note that
$$d(x'_n,a_n)+d(a_n,x) = d(x'_n,x_n) \quad \text{ and } \quad d(x'_n,a_n)=d(a_n,x_n).$$ By
continuity of the distance function  $$d(x^*,a)+d(a,x) = d(x^*,x)
\quad \text{ and } \quad d(x^*,a)=d(a,x).$$  However $x'$, the
antipode of $x$ in $B(a,r)$, satisfies these same relations in place
of $x^*$. It then follows from uniqueness of the antipode that $x^*
= x'$. Therefore $f$ is continuous.
\end{proof}

\begin{thm}
Suppose that $X$ is a Busemann $G$-space, $\Omega \subset X$ is a
fundamental region, $B(x,\rho) \subset \Omega$ is stably starlike at
$x$ and $\delta>0$ is a radius promised in the definition of stably
starlike (cf. Definition \ref{stably starlike}).  Let
$$
\hat{\Omega} = \{(a,r,y) \ | \ a \in B(x,\delta) \subset U(a,r)
\subset B(a,r) \subset \Omega, y\in \overline{B(x,\rho)-
B(x,\delta)}\}.
$$
Then the map $g: \hat{\Omega} \to X; g(a,r,y) = \psi[a,r](y),$ where
$\psi[a,r]: \overline{B(x,\rho)- B(x,\delta)} \to S(a,r)$ is the
projection map centered at $a$, is continuous.
\end{thm}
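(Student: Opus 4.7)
The plan is to argue by sequential continuity, mimicking the continuity proof for the projection map in the preceding proposition but now tracking the three parameters $(a,r,y)$ simultaneously. Continuity is a local property, so it suffices to verify continuity on an arbitrary compact subset $K \subset \hat{\Omega}$. Take a sequence $(a_n,r_n,y_n) \in K$ with $(a_n,r_n,y_n)\to (a,r,y)\in K$, and set $t_n := g(a_n,r_n,y_n) = \psi[a_n,r_n](y_n)$. By compactness of the closure of $\Omega$, pass to a subsequence so that $t_n \to t^* \in \overline{\Omega}$. It remains to show $t^* = \psi[a,r](y)$.

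First I would locate $t^*$ on the target sphere: since $d(a_n,t_n) = r_n$ for all $n$, the continuity of $d$ yields $d(a,t^*) = r$, so $t^* \in S(a,r)$. Note also that since $a\in B(x,\delta)\subset U(a,r)$, we have $a\neq t^*$. Next I would distinguish betweenness cases. For each $n$ one of $a_n - t_n - y_n$, $t_n = y_n$, or $a_n - y_n - t_n$ holds; partition the indices accordingly into subsets $L$, $M$, $N$, and pick one which is infinite. On $L$ we have $d(a_n,t_n)+d(t_n,y_n) = d(a_n,y_n)$, and passing to the limit produces $d(a,t^*)+d(t^*,y)=d(a,y)$, i.e.\ $a-t^*-y$ or $t^*=y$. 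On $N$ we similarly obtain $a-y-t^*$ or $y=t^*$ in the limit, and on $M$ directly $t^*=y$. In every case, $t^*\in S(a,r)$ satisfies one of the three defining relations for $\psi[a,r](y)$.

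The last step is uniqueness. Because $B(x,\rho)$ is stably starlike at $x$ and $a \in B(x,\delta)$, the ball $B(x,\rho)$ is starlike with respect to $a$, so $y$ lies on a unique segment issuing from $a$ (up to extension through the boundary). Hence $\psi[a,r](y)$ is the unique point of $S(a,r)$ satisfying one of the three betweenness relations $a-t-y$, $t=y$, $a-y-t$. Since $t^*$ also satisfies such a relation, $t^* = \psi[a,r](y)$, which proves continuity.

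The main obstacle I anticipate is purely bookkeeping: keeping track that all three variables are moving simultaneously while the three possible betweenness configurations can switch infinitely often along the sequence. The trichotomy-plus-subsequence device used in the preceding proposition resolves this cleanly, because continuity of $d$ together with the uniqueness of extension axiom (Definition \ref{bus}(iv)) force the limit relation to coincide with the defining relation for $\psi[a,r](y)$, without needing any finer control on the rate of convergence or on the varying geodesic segments.
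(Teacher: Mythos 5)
Your proposal is correct and follows essentially the same route as the paper: reduce to compact subsets, take a convergent sequence $(a_n,r_n,y_n)\to(a,r,y)$ with $g(a_n,r_n,y_n)\to t^*$, note $t^*\in S(a,r)$, use the trichotomy of betweenness relations together with continuity of the distance function to show $t^*$ satisfies one of the three defining relations, and conclude by the uniqueness coming from starlikeness of $B(x,\rho)$ with respect to $a$. The paper's version is merely terser (it does not spell out the partition into infinite index subsets), so no substantive difference.
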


\begin{proof}
Let  $\Omega^*$ be a compact subset of $\Omega$.  Define

$$\hat{\Omega}^* = \{(a,r,y) \in \Omega \ | \  y\in B(a,r) \subset \Omega^*
\}.
$$  It suffices to show that $g$ is continuous on $\hat{\Omega}^*.$

Suppose there is a
sequence $(a_n,r_n,y_n)\in \Omega^*$ such that $(a_n,r_n,y_n)
\to (a,r,y)$ and $g(a_n,r_n,y_n) \to y^*$.  Let $y_n' = g(a_n,r_n,y_n)$.
 Note that $y^* \in S(a,r)$.  Also, precisely
one of $a_n - y_n' - y_n$, $y_n'=y_n$, or $a_n - y_n - y_n'$ holds for each
$n$. By continuity of the distance function, one of $a - y^* -
y$, $y^*=y$, or $a-y-y^*$ holds. However, $y'=g(a,r,y)$ is a
point of $S(a,r)$ that also satisfies this condition when replaced
with $y^*$.  Thus $y^* = y'$.  Therefore $g$ is indeed continuous.

\end{proof}

\begin{thm}
Suppose that $X$ is a Busemann $G$-space, $\Omega \subset X$ is a
fundamental region and $B(x,\epsilon) \subset \Omega$. Let $y$ be a
fixed point in $S(x,\epsilon)$ and let $y'$ denote the antipode of
$y$ in $S(x,\epsilon)$.  Then the midpoint map $$\Gamma:
S(x,\epsilon) \to X$$ such that $\Gamma(z) = m,$ where $m$ is the
midpoint of $\overline{zy'},$ is continuous.
\end{thm}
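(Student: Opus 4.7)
The plan is to argue by sequential compactness, following the same template used for the antipodal map and the projection map earlier in this section. Given $z_n \to z$ in $S(x,\epsilon)$, set $m_n := \Gamma(z_n)$ and first observe that $\{m_n\}$ is bounded: by the triangle inequality,
$$d(x, m_n) \le d(x, z_n) + d(z_n, m_n) = \epsilon + \tfrac{1}{2} d(z_n, y') \le 2\epsilon.$$
By Finite Compactness (axiom (ii)), any such sequence has a convergent subsequence, so it is enough to show that if $m_n \to m^{\ast}$ along a subsequence, then $m^{\ast} = \Gamma(z)$.

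Next I would pass to the limit in the defining relations of a midpoint. Since each $m_n$ is the midpoint of $\overline{z_n y'}$, we have
$$d(z_n, m_n) = d(m_n, y') = \tfrac{1}{2} d(z_n, y') \quad\text{and}\quad z_n - m_n - y'.$$
Continuity of the metric gives
$$d(z, m^{\ast}) = d(m^{\ast}, y') = \tfrac{1}{2} d(z, y') \quad\text{and}\quad d(z, m^{\ast}) + d(m^{\ast}, y') = d(z, y'),$$
so $m^{\ast}$ lies between $z$ and $y'$ and is equidistant from both endpoints. Uniqueness of midpoints in a Busemann $G$-space (an immediate consequence of the Uniqueness of Extension axiom (iv) applied along $\overline{z y'}$) then forces $m^{\ast} = \Gamma(z)$, proving continuity.

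The one point requiring care — and really the only potential obstacle — is well-definedness of $\Gamma$, i.e.\ the existence and uniqueness of the segment $\overline{z y'}$ and its midpoint for every $z \in S(x,\epsilon)$. Since $B(x,\epsilon) \subset \Omega$ lies in a fundamental region, one has $\epsilon < \rho(a)$ for relevant $a$, and by Lemma \ref{rho} the function $\rho$ is continuous (or identically $+\infty$), so $r_0 := \min_{w \in \overline{B(x,\epsilon)}} \rho(w) > 0$. Shrinking $\epsilon$ if necessary so that $2\epsilon < r_0$, any two points of $\overline{B(x,\epsilon)}$ are joined by a unique shortest segment (Local Uniqueness of Joins), whose midpoint is therefore unique. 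This puts us squarely in the setting where the limiting identification $m^{\ast} = \Gamma(z)$ is valid, and the continuity argument concludes the proof.
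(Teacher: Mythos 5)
Your argument is correct and follows essentially the same route as the paper's proof: take $z_n\to z$ with $m_n\to m^{*}$, pass to the limit in the midpoint relations $z_n-m_n-y'$ and $d(z_n,m_n)=d(m_n,y')$ via continuity of the distance function, and conclude $m^{*}=\Gamma(z)$ by uniqueness of joins. One small remark: you should not ``shrink $\epsilon$ if necessary,'' since $\epsilon$ is fixed by the statement; this is also unnecessary, because the fundamental region hypothesis already forces $\epsilon<\rho(x)$, so any two points of $B(x,\epsilon)\subset U(x,\rho(x))$ are joined by a unique shortest segment and the midpoint map is automatically well defined.
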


\begin{proof}  Suppose $\{ z_n \} \subset S(x,\epsilon)$
is a sequence
such that $z_n
\to z$ and $\Gamma(z_n) = m_n  \to  m^*$.  Then $z _n - m_n - y'$
and $d(z_n, m_n) = d(m_n, y')$. By the continuity of the distance
function, $z- m^* - y'$ and $d(z,m^*) = d(m^*,y')$.
However,
$m$ also
satisfies these relations in place of $m^*$.
By uniqueness of joins,
$m = m^*$.  Thus $\Gamma$ is indeed
continuous.

\end{proof}

We are now ready to prove the strong form of the first main theorem.

\begin{proof}[Proof of Theorem~\ref{main}]
Let $y,z \in S(x,\epsilon)$ and $\alpha: I \to S(x,\epsilon)$ be a
path from  $y$ to $z$.  Let $y'$ be the antipode of $y$, $m(t)$ the
midpoint of $\overline{\alpha(t) y'}$, and $\gamma(t) = \frac 12
d(\alpha(t), y')$. The isotopy $H:S_\epsilon(x) \times I \to
S_\epsilon(x)$ is given by $H_t$ which is the composition of
homeomorphisms

$$S_\epsilon(x) \overset{\Phi[x,\epsilon]}{\to}
S_\epsilon(x)\overset{\psi[m(t),\gamma(t)]}{\to}
S_{\gamma(t)}(m(t))\overset{\Phi[m(t),\gamma(t)]}{\to}
S_{\gamma(t)}(m(t))\overset{\psi[m(t),\gamma(t)]^{-1}}{\to}
S_\epsilon(x).$$  Continuity of $H_t$ in the variable $t$ follows
from the propositions above. Thus $H$ is the desired isotopy.
\end{proof}

\section{Uniformly locally $G$-homogeneous Busemann $G$-spaces}

\begin{defn}
\label{unif} We say that a metric space $(X,d)$ is \textit{uniformly
locally $G$-homogeneous on a set} $C\subset X$ if there are  numbers
$\delta,\varepsilon_1,\varepsilon_2$ such that
\begin{itemize}
\item $0<\delta\leq \varepsilon_1<\varepsilon_2,$

\item For every $c\in C$ and every  $\varepsilon\in (\varepsilon_1,\varepsilon_2)$ the closed ball $B(c,\varepsilon)$
is starlike with respect to every point $x$ in open ball $U(c,\delta),$
\end{itemize}
We say that a metric space $(X,d)$ is \textit{uniformly locally
$G$-homogeneous on an orbal subset} $C\subset X$ if additionally
\begin{itemize}
\item $C$ contains a ball $B(c_0,r)$ such that $\varepsilon_2<r.$
\end{itemize}
\end{defn}

The following is our second main theorem (cf. Theorem 1.2 from the Introduction).

\begin{thm}
\label{fd} If a Busemann $G$-space is uniformly locally
$G$-homogeneous on an orbal subset $C$, then it has  finite
topological dimension.
\end{thm}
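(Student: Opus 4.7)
The plan is to reduce the global claim to finite-dimensionality of a single small closed ball inside the orbal neighborhood, and then further to its boundary sphere. Fix $\varepsilon \in (\varepsilon_1, \varepsilon_2)$ and write $B := B(c_0,\varepsilon)$, $S := S(c_0,\varepsilon)$; since $\varepsilon < r$ we have $B \subset C$, so the uniform hypothesis applies at every point of $B$. Proposition \ref{cone} identifies $B$ with the topological cone $C(S)$, so $\dim B = \dim S + 1$ whenever either is finite. Moreover, by topological homogeneity (Corollary \ref{homo}) every point of $X$ has a closed neighborhood homeomorphic to $B$, and $X$ is second countable (it is a complete separable metric space by the finite compactness axiom). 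The countable sum theorem for covering dimension of separable metric spaces therefore gives $\dim X \leq \dim B$ as soon as $\dim B < \infty$. Consequently, the entire argument reduces to proving $\dim S < \infty$.

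To bound $\dim S$, I would exploit the parametrized family of cone structures guaranteed by uniform local $G$-homogeneity: for each $y \in U(c_0,\delta)$ the ball $B$ is a cone with vertex $y$ over $S$, and the projection maps $\psi_y$ (Definition \ref{projection}) and the associated antipodal involutions depend continuously on the pair $(y,\cdot)$ by the continuity theorems established in Section~5. Picking $y_1 \in U(c_0,\delta)$ distinct from $c_0$, the ball $B(y_1,\varepsilon)$ is a second cone of the same radius, and the sphere $S(y_1,\varepsilon)$ is carried homeomorphically onto $S$ by the projection centered at $c_0$. Iterating along a discrete chain $c_0 = p_0, p_1, \dots, p_k$ with $d(p_i, p_{i+1}) < \delta$ inside $B(c_0,r)$, and invoking the isotopy construction of Theorem~\ref{hom}, one obtains a finite collection of balls $B(p_i,\varepsilon)$ whose union contains a full neighborhood of $S$. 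The composed projections then describe every point of $S$ by a tuple of bounded length drawn from a finite-dimensional parameter space, which is the mechanism by which uniformity upgrades the local cone structure into a global finite-dimensional description of $S$.

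The principal obstacle is precisely the quantitative step: turning the chain of projections into an actual numerical bound on $\dim S$. Neither $\psi_y$ nor $\Phi_y$ reduces dimension on its own (each is a self-homeomorphism of $S$), so the finite-dimensionality has to be extracted from the interaction of several cone structures with the compactness of $B(c_0,\varepsilon_2)$ and with the uniform constants $\delta$, $\varepsilon_1$, $\varepsilon_2$. A workable strategy, paralleling \cite{Berestovskii1} with starlikeness from a neighborhood of vertices in place of full convexity, is to produce a finite open cover of $S$ by sets each sent, by a composition of projections, into a lower-dimensional stratum of some auxiliary finite polyhedron, and then to apply Hurewicz's dimension-lowering theorem (or an inductive application of the Menger--Urysohn definition of dimension). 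Once $\dim S < \infty$ is secured, the two reductions of the first paragraph immediately yield $\dim X < \infty$.
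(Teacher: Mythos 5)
Your first paragraph (reduce to one small ball via homogeneity and the countable sum theorem, and note that by the cone structure the ball is finite-dimensional iff its boundary sphere is) is sound and matches the closing step of the paper's argument. But the heart of the theorem --- actually proving that some small ball or sphere is finite-dimensional --- is missing. You acknowledge this yourself: each projection $\psi_y$ and antipodal map $\Phi_y$ is a self-homeomorphism of a sphere, so no composition of them can lower dimension, and the proposed remedy (a finite cover of $S$ whose pieces are sent by composed projections into ``lower-dimensional strata of some auxiliary finite polyhedron,'' followed by Hurewicz's theorem) is never constructed: there is no candidate polyhedron, no definition of the strata, and no mechanism by which starlikeness would produce such maps. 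As it stands the proposal reduces the problem to an equivalent one and then gestures at a strategy that, for the reason you yourself identify, cannot work in the form stated.

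The paper's mechanism is quite different and entirely elementary: it embeds a small ball directly into Euclidean space by distance coordinates. One fixes $\varepsilon_1<\varepsilon_1'<\varepsilon_2'<\varepsilon_2$, sets $r_1=\frac12\min(\varepsilon_2'-\varepsilon_1',\delta)$, and considers the compact set $D$ of pairs $(x,z)$ with $x\in B(c_0,r_1)$, $z\in B(c_0,r)$, $d(x,z)=\varepsilon_2'$. Choosing a finite $\varepsilon_0$-net $(x_1,z_1),\dots,(x_m,z_m)$ in $D$, the map $f(y)=(d(y,z_1),\dots,d(y,z_m))$ is shown to be injective on $B(c_0,r_1)$: given $x\neq y$ there, one extends $\overline{xy}$ to a segment $\overline{xz}$ of length $\varepsilon_2'$, picks a net element $(x_i,z_i)$ close to $(x,z)$, and observes that $d(x,z_i)=d(y,z_i)=\varepsilon$ would force $x$ and $y$ to be two distinct points of $S(z_i,\varepsilon)$ lying on a single segment issuing from $z\in U(z_i,\delta)$ with $\varepsilon\in(\varepsilon_1,\varepsilon_2)$, contradicting starlikeness of $B(z_i,\varepsilon)$ with respect to $z$. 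A continuous injection of a compact set into $\mathbb{R}^m$ is an embedding, so $\dim B(c_0,r_1)\leq m$, and homogeneity finishes the proof. This is the point where the uniformity constants $\delta,\varepsilon_1,\varepsilon_2$ and the orbal hypothesis $\varepsilon_2<r$ are genuinely used, and it is the idea your proposal lacks; if you want to repair your argument, replacing the chain-of-projections step with this distance-coordinate embedding (which parallels the convex-ball argument of \cite{Berestovskii1} you cite) is the way to do it.
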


\begin{proof}
Assume the setup given be Definition \ref{unif}.  Then we can find
numbers $\varepsilon_1',\varepsilon_2'$ such that
\begin{equation}
\label{eps}
\varepsilon_1< \varepsilon_1'< \varepsilon_2'<\varepsilon_2 \quad\mbox{and}\quad \frac{\varepsilon_2'-\varepsilon_1'}{2}+\varepsilon_2'<\varepsilon_2.
\end{equation}
Let us choose
numbers
\begin{equation}
\label{numbers}
r_1=\frac{1}{2}\min\left(\varepsilon_2'-\varepsilon_1',\delta\right) \quad\mbox{and arbitrary}\quad \varepsilon_0: 0< \varepsilon_0< \min(\delta,\varepsilon_2-\varepsilon_2',\varepsilon_1'-\varepsilon_1).
\end{equation}
Consider the set
$$D=\{(x,z)\in B(c_0,r_1)\times B(c_0,r)| d(x,z)=\varepsilon_2'\}$$
and define a metric $d_1$ on $D$ by the formula
\begin{equation}
\label{d}
d_1((x,z),(x',z'))=\max(d(x,x'),d(z,z')).
\end{equation}
Evidently the metric space $(D,d_1)$ is compact. Thus there  is a
finite $\varepsilon_0$-net
$$\{(x_1,z_1),\dots, (x_m,z_m)\}$$ in $(D,d_1).$
Define a (continuous) map $f:B(c_0,r_1)\rightarrow \mathbb{R}^m$ by the
formula
$$f(y)=(d(y,z_1),\dots, d(y,z_m)).$$
We state that $f$ is a topological embedding. For this it is enough
to show that $f(x)\neq f(y)$ if $x,y\in B(c_0,r_1)$ and $x\neq y.$
Indeed, it follows from the Triangle inequality and the formula
(\ref{numbers}) that $d(x,y)\leq
\varepsilon_2'-\varepsilon_1'<\varepsilon_2'.$ Using once more
Definition \ref{unif} and equations (\ref{eps}) and (\ref{numbers}),
we see that there is unique extension of the segment $\overline{xy}$
to a segment $\overline{xz}$ of length $\varepsilon_2',$ and this
segment lies in $B(c_0,r).$ Clearly, $(x,z)\in D.$ By construction,
there is an index $i\in\{1,\dots,m\}$ such that
$d_1((x_i,z_i),(x,z))<\varepsilon_0.$ We claim that $d(x,z_i)\neq
d(y,z_i)$ (and so $f(x)\neq f(y)$). Otherwise, using the Triangle
inequality and equations (\ref{eps}), (\ref{numbers}), and
(\ref{d}), we see that
\begin{equation}
\label{see}
\varepsilon:=d(z_i,x)=d(z_i,y)\in (\varepsilon_1,\varepsilon_2).
\end{equation}
On the other hand, $z-y-x$ and $d(z_i,z)<\delta,$ and the equation
(\ref{see}) contradicts the statement that the closed ball
$B(z_i,\varepsilon)$ must be starlike with respect to the point $z.$
So $f$ is one-to-one on $B(c_0,r_1)$ and the topological  dimension
of $B(c_0,r_1)$ is less than or equal to $m.$ Now Corollary
\ref{homo} implies that the topological dimension of $(X,d)$ is less
than or equal to $m.$
\end{proof}

\begin{remark}
\label{generalization} As we said in the Introduction, it was proved
in \cite{Berestovskii1} that a Busemann $G$-space $X$ is
finite-dimensional if $X$ has small metrically convex balls near
some of its points. It is known that this implies that $X$  also has
small metrically strongly convex balls near the same points
\cite{Busemann3}. This fact together with Remark \ref{convexity}
implies that a Busemann $G$-space, which has small convex balls near
some point, is also uniformly locally $G$-homogeneous on an orbal
subset. So Theorem \ref{fd} generalizes the result from
\cite{Berestovskii1} mentioned above.
\end{remark}

\section{example}

In this section we prove Theorem~\ref{ex}, i.e. we present an
example of a Busemann $G$-space that is uniformly locally
$G$-homogeneous on an orbal subset and locally $G$-homogeneous, but
has no convex metric ball of positive radius.

In 1999 Gribanova \cite{Gribanova} found all inner metrics on the
upper half plane which are invariant under the action of the group
$$\Gamma: x'=\alpha x + \beta, y'=\alpha y, \alpha > 0, -\infty<\beta<+\infty,$$
as well as their geodesics. It follows from
\cite{Berestovskii5}
that every such metric must be Finslerian.
Thus it is easy to see that the corresponding line element must have a
form $ds = y^{-1} F(dx, dy)$ with a fixed norm $F.$
Gribanova completely classified all quasihyperbolic
geometries determined by the above line element
(i.e. Busemann $G$-spaces)
depending on the properties of $F$.

In particular, the following theorem was proven:

\begin{thm}
\label{quasi}
The line element of a quasihyperbolic plane can be written in the
form $$ds = y^{-1} F(dx, dy);$$ moreover, the function $F(u_1, u_2)$
is defined for all $u_1$ and $u_2$ and satisfies the following
conditions
\begin{enumerate}
\item $F(u_1, u_2) > 0$ for $(u_1,u_2) \ne (0,0)$;
\item $F(ku_1,ku_2) = | k | F(u_1,u_2)$ for every real $k$;
\item $F$ is convex;
\item $F$ is differentiable everywhere except for $(0,0)$;
\item The tangents of the curve $F(u_1, u_2) = 1$, parallel to the
straight line $u_2=0$, touch this curve at a unique point.
\end{enumerate}
Conversely, each line element of the form $$ds = y^{-1} F(dx, dy)$$
with $F$ possessing these properties determines a quasihyperbolic
geometry.

Define the function $F^*(x,y)$ by $$F^* = \underset{F(u_1,u_2) \leq
1}{\max } (xu_2-yu_1).$$  Geodesics of the quasihyperbolic plane
with the line element $ds = y^{-1} F(dx, dy)$ are the intersections
of the half-plane $y> 0$ with the curves $F^*(x-a,y) = k, k > 0,
-\infty < a < \infty$, and with the tangents of these curves at
their intersection points with the $x$-axis.  For two distinct
points of a quasihyperbolic plane, there is exactly one geodesic
passing through them.

\end{thm}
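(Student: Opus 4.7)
The plan is to derive the theorem in three stages: (I) show that any $\Gamma$-invariant inner metric has the stated line-element form; (II) extract properties (1)--(5) of $F$ from the Busemann $G$-space axioms together with a regularity analysis; (III) integrate the geodesic equations and identify the result with the curves $F^*(x-a,y)=k$ and their tangents at the $x$-axis.

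For (I), I would invoke the cited result of Berestovski\u\i{} from \cite{Berestovskii5} that every $\Gamma$-invariant inner metric on the upper half-plane is Finslerian, so it comes from a line element $L(x,y;dx,dy)$ that is positive and positively homogeneous of degree one in $(dx,dy)$. Translation invariance under $x\mapsto x+\beta$ kills the $x$-dependence of $L$; applying the scaling $(x,y)\mapsto(\alpha x,\alpha y)$ and using degree-one homogeneity then forces $L(y;dx,dy)=y^{-1}L(1;dx,dy)$, whence we set $F(u_1,u_2):=L(1;u_1,u_2)$. For (II), properties (1)--(3) are immediate from $F$ being the Minkowski functional of the unit ball of a norm: positivity, absolute positive homogeneity, and convexity. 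Property (4), $C^1$-smoothness of $F$ away from the origin, is equivalent to local uniqueness of the geodesic extension through each tangent direction, and thus follows from Busemann's local extendibility and uniqueness-of-extension axioms by a standard nondegeneracy argument. Property (5), the horizontal tangent condition, is the subtle one and reflects uniqueness of extension when geodesics approach the $x$-axis (the boundary at infinity of the model): failure of (5) would produce two distinct geodesic continuations with the same endpoint behavior.

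For (III), I would write the Euler--Lagrange equations for $L=y^{-1}F(\dot x,\dot y)$. The $x$-translation symmetry yields, by Noether's theorem, a first integral of the form $y^{-1}F_{u_1}(\dot x,\dot y)=\text{const}$. Combined with Euler's identity $\dot x F_{u_1}+\dot y F_{u_2}=F$ coming from the degree-one homogeneity of $F$, the geodesic ODE reduces to quadrature and yields an implicit equation for each nonvertical geodesic. Identifying this equation with $F^*(x-a,y)=k$ amounts to recognizing $F^*$ as the Legendre (polar) transform of $F$ composed with a 90-degree rotation; the rotation arises precisely because the conserved quantity pairs position differentials with the transverse component of the $F$-gradient. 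The straight-line geodesics tangent to these curves at the $x$-axis appear as limit cases $k\to 0$ or as boundary geodesics escaping to $y=0$, and fall out of the same computation by tracking the degenerate limit.

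The main obstacle will be the converse direction: verifying that, for any $F$ satisfying (1)--(5), the line element $ds=y^{-1}F(dx,dy)$ genuinely defines a Busemann $G$-space, hence one must check all four axioms of Definition \ref{bus}. Menger convexity and finite compactness are straightforward; local extendibility requires using (4) to solve the geodesic ODE in a neighborhood with prescribed initial tangent; uniqueness of extension is the truly delicate point, as it uses (5) to rule out bifurcation of geodesics both in the interior and along horizontal asymptotic directions. A secondary obstacle is uniqueness of the join between two distinct points, which requires combining strict convexity implied jointly by (3) and (4) with an analysis of the family of curves $F^*(x-a,y)=k$ to show it sweeps out the half-plane exactly once. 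I expect the axiom-verification step to consume most of the genuine work, with the remainder being an extended but routine computation.
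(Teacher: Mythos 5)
First, a point of comparison: the paper does not prove this statement at all. Theorem~\ref{quasi} is imported verbatim as a result of Gribanova \cite{Gribanova} (with the geodesic description going back to Busemann's isoperimetric interpretation in \cite{Busemann5}); the only "proof" in the paper is the citation. So your attempt cannot be matched against an argument in the text, and it must be judged on its own. Your overall architecture --- symmetry reduction to $ds=y^{-1}F(dx,dy)$, Noether first integral from $x$-translation plus Euler's identity to reduce to quadrature, identification of the resulting implicit equation with the level sets of the rotated support function $F^*$, and then verification of the four $G$-space axioms for the converse --- is indeed the natural route and is consistent with how Busemann and Gribanova approach the problem. Stage (I) in particular is correct and matches the paper's own remark that \cite{Berestovskii5} forces the metric to be Finslerian of the stated form.

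There is, however, a genuine gap in stage (II): your dictionary between conditions (4)--(5) and the Busemann axioms is not right as stated. You claim that differentiability of $F$ away from the origin is ``equivalent to local uniqueness of the geodesic extension through each tangent direction.'' The paper's own Stadium example refutes this reading: there $F$ is differentiable away from $0$, yet the unit ball has flat vertical sides, so the dual curve $C_1$ (and hence every geodesic $\lambda C_2+(x_0,0)$) has a \emph{corner} at its top point --- near the apex the geodesic looks like $y=\lambda-|x-x_0|$. Geodesics are therefore not determined by a tangent direction at all, and any argument resting on that equivalence will fail. The correct role of (4) is dual: differentiability of $F$ off the origin is equivalent to strict convexity (absence of line segments) of the polar curves $F^*(x-a,y)=k$, which is what delivers uniqueness of the geodesic through two points and axiom (iv), where extension is prescribed by a pair of points and a length, not by a direction. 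Similarly, condition (5) is not a generic ``boundary at infinity'' statement: the intersection of $F^*(x-a,y)=k$ with the $x$-axis is governed by the maximizer of $u\mapsto (x-a)u_2$ over $B_F$, i.e.\ by the topmost and bottommost points of the unit ball, and (5) says exactly that these are exposed points, so that the tangent-line geodesics at the $x$-axis crossings are well defined and unique. Relatedly, your description of those tangent lines as ``limit cases $k\to 0$'' is off; they arise from the degenerate value of the Noether integral (in the classical hyperbolic case, $c=0$ giving vertical lines), not from shrinking $k$. Until the (4)/(5) dictionary is set up correctly, the converse direction --- which you rightly identify as the bulk of the work --- cannot be carried out, because the verification of axioms (iii) and (iv) hinges precisely on these two dual convexity statements.
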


\begin{remark}
Geometric meaning of geodesics is that they are solutions of
isoperimetric problem for two-dimensional normed vector space with
the norm $F$ \cite{Busemann5}. Here, as well as below,  a
prescription is given how to construct them. However, Busemann also
said in \cite{Busemann6} (cf. p. 82) that spaces, defined in this
manner by two norms $F_1$ and $F_2,$ are  isometric if and only if
there is a linear transformation $l$ of $\mathbb{R}^2$ such that
$F_2=F_1\circ l.$ This statement is not true because the usual
Euclidean norm $F(u,v)=\sqrt{u^2+v^2}$ gives a hyperbolic plane of
curvature $-1,$ while the norm $kF, k>0,$ gives a hyperbolic plane
of curvature $\frac{-1}{k^2}.$ 
\end{remark}

Gribanova also proved a theorem which can
equivalently
be stated as follows.

\begin{thm}
\label{nonconvex}
Suppose also that both tangent lines to the curve (so-called indicatrix)
 $C= \{ (x,y) \in \mathbb{R}^2 \ | \ F(x,y)= 1 \}$
at intersection points of this curve with $x$-axis
have nontrivial joint segments with the curve $C.$
Then no closed ball $B(p,r)$ of the
quasihyperbolic plane with the line element $ds = y^{-1} F(dx, dy)$
is geodesically convex for any $r>0$.
\end{thm}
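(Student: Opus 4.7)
My plan is to exploit the hypothesis to force the sphere $S(p,r)$ to fail local Euclidean-convexity, and then to use that failure to produce a pair of points in $B(p,r)$ whose unique joining geodesic exits $B(p,r)$. By the simply transitive isometric action of $\Gamma$ on the upper half-plane, it suffices to treat balls centered at $p = (0,1)$ for each $r>0$.

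First, I would translate the hypothesis on $C = \{F=1\}$ into structural information about the two families of geodesics provided by Theorem~\ref{quasi}. Via the rotated polar relation $F^*(x,y) = F^\circ(-y,x)$, the flat vertical segments of $C$ at the $x$-axis intersections $(\pm c,0)$ correspond to corner points of the dual unit ball $\{F^* \leq 1\}$ located on the $y$-axis, at $(0, \pm 1/c)$. Consequently every Type (a) geodesic $\{F^*(x-a,y) = k\}$ acquires a corner at $(a, k/c)$ inside the upper half-plane, and the corner angle there is bounded below by a positive constant depending only on $F$ (essentially the angle by which the outward normal to $C$ jumps across the flat segment). Meanwhile, condition (5) guarantees that $\{F^* = 1\}$ is smooth at its $x$-axis intersection points $(\pm 1/M, 0)$ where $M = \max_{F \leq 1} u_2$, so Type (b) tangent-line geodesics are well defined; a short computation using the duality shows that the outward normal to $\{F^* = 1\}$ at each of these points is the same up to sign, so all Type (b) geodesics form a single family of mutually parallel straight lines.

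Next I would analyze the sphere $S(p,r)$ via the exponential-map-like parametrization coming from these two types of geodesics. As a Type (a) geodesic through $p$ is varied continuously (by moving the center $(a,0)$ and adjusting $k$ so the curve still passes through $p$), its arc-length-$r$ endpoint traces out $S(p,r)$. Because every Type (a) geodesic carries a corner, for a suitable range of parameters this corner lies at distance less than $r$ from $p$ along the geodesic; at those parameter values, the natural parametrization of $S(p,r)$ develops a non-smooth feature. A careful bookkeeping of which side of the corner is crossed, together with the fact that only a single direction at $p$ admits a Type (b) geodesic, then identifies a point of $S(p,r)$ where its outward geometry forms a reflex angle pointing away from $p$, i.e., a local failure of Euclidean convexity.

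The main obstacle is converting this reflex feature into the desired counterexample to geodesic convexity. I would choose $q_1, q_2 \in B(p,r)$ on opposite sides of the reflex, slightly interior to $S(p,r)$, and argue, using the explicit classification of geodesics in Theorem~\ref{quasi} together with the uniqueness of extensions (axiom (iv) of Definition~\ref{bus}), that the unique geodesic $\overline{q_1 q_2}$ must cut across the reflex region. A quantitative comparison of the $F^*$-corner angle with the local curvature of $S(p,r)$ in the smooth regime then shows that this geodesic cannot be absorbed into the interior of $B(p,r)$ and must in fact cross to the outside. The delicate point is the uniformity of this argument in $r$, since the ``corner displacement'' on $S(p,r)$ is a fixed geometric feature of $F$ independent of $r$, while $S(p,r)$ itself scales with $r$; this independence is precisely what prevents the reflex from being smoothed out by the global curvature of the geometry.
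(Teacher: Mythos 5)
First, a point of reference: the paper does not actually prove Theorem~\ref{nonconvex} --- it is quoted from Gribanova \cite{Gribanova} without proof. What the paper does supply is a one-line verification of non-convexity for the specific Stadium space: take the geodesic $\lambda C_2\cap\mathbb{R}^2_+$ with $\lambda$ slightly larger than $e^{2K}$; its corner (top point) $(0,\lambda)$ lies just outside $B((0,1),K)$, while the two branches descending from the corner with slopes $\pm 1$ immediately re-enter the ball, whose boundary near $(0,e^{2K})$ is smooth with horizontal tangent. Your opening moves --- reducing to $p=(0,1)$ by homogeneity, and converting the flat segments of the indicatrix $C$ at its $x$-axis points into corners of the dual curve $F^*=1$, hence into corners of every oval geodesic $F^*(x-a,y)=k$ in the upper half-plane --- are correct and are exactly the mechanism that makes the theorem true.

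However, the way you convert the corner into non-convexity contains a genuine error. You propose to locate ``a point of $S(p,r)$ where its outward geometry forms a reflex angle \dots i.e., a local failure of Euclidean convexity'' and then to join two points straddling that reflex. No such point exists: in the Stadium space (which satisfies the hypothesis of the theorem) the paper computes explicitly that $S_K$ is a \emph{smooth} curve that is \emph{strongly convex in the affine sense} ($\frac{dy}{dx}$ is continuous across $B_1\cup B_2\cup B_3$ and $\frac{d^2y}{dx^2}$ has a fixed sign on each piece). The corners of the geodesics do leave a trace on the sphere --- they are responsible for the transition between the pieces $B_2$ and $B_3$ --- but the one-sided slopes match there (both become vertical), so the sphere has neither a corner nor a concavity. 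The failure of geodesic convexity is therefore not a failure of the ball to be convex as a planar region; it is the fact that the \emph{geodesics} are non-smooth and ``poke out'' of an affinely convex ball: a cornered geodesic tangent from outside to the smooth top of $S_K$ meets the ball in two disjoint arcs, and by the uniqueness of geodesics through two points (Theorem~\ref{quasi}) the segment joining a point of one arc to a point of the other must pass through the corner, outside the ball. Your final two steps, which hinge on the reflex feature and on comparing ``the $F^*$-corner angle with the local curvature of $S(p,r)$'', would have to be discarded and replaced by this comparison of the corner of the geodesic against the smoothness of the sphere at the point where the geodesic grazes it.
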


\subsection{Stadium space norm}

Let us consider a quasihyperbolic plane $X$ which we shall call the
``Stadium Space''. It is defined by the set "Stadium" which consists
of squares with side length $2$ together with semidisks on the top
and the bottom with radius $1$, as pictured in Figure \ref{Stadium}.

 \begin{figure}
     \begin{center}
\epsfig{file=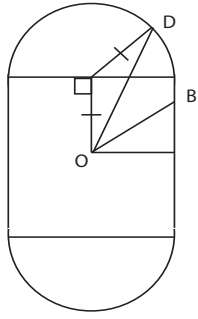, width=1.0 in, height=1.62in}
 \caption{The Stadium} \label{Stadium}
  \end{center}
\end{figure}

The Stadium defines a norm $F=\|\cdot\|$ on $\mathbb{R}^2$, if we
assume 
that its boundary curve $C$ is a unit circle. It is clear
that the norm $F$ satisfies all hypotheses of Theorems \ref{quasi} and
\ref{nonconvex}. The norm of a vector $\mathbf{v}$ in the direction with the
angle $\psi$
measured from the positive $x$-axis is equal to $$\| \mathbf{v} \| =
l(\psi) |\mathbf{v}|,$$ where $|\mathbf{v}|$ is the usual Euclidean
norm and
\begin{equation}
\label{length}
l(\psi) = \left\{
                    \begin{array}{ll}
                         |\cos \psi | , & \text{if} -\frac{\pi}{4} \leq \psi \leq \frac{\pi}{4} \text{ or } \frac{3\pi}{4} \leq \psi \leq \frac{5\pi}{4} \\
                         \frac{1}{2|\sin \psi | }, & \text{if} \frac{\pi}{4} \leq \psi \leq \frac{3\pi}{4} \text{ or } -\frac{3\pi}{4} \leq \psi \leq -\frac{\pi}{4}
                       \end{array}
                     \right.
\end{equation}

\noindent To see this, observe in Figure \ref{Stadium} that
$l(\psi)$ is $\frac{1}{OB}$  in the first case and $\frac{1}{OD}$
in the second case.

\subsection{Geodesics }

Geodesics in the metric geometry are determined by the dual curve to
the Stadium space (cf. Figure \ref{dual}).
$$C_1 = \{(x,y) \in \mathbb{R}^2 \ | \
\underset{(u_1,u_2) \in \mathcal{C}}{\max} (xu_1 + yu_2) = 1 \}.$$
In particular, if $C_1$ is rotated by $\frac{\pi}{2}$ to obtain
$C_2$ (cf. Figure \ref{geo}), then the geodesics are the portions of
the  vertical lines and curves of the form
$$C_2(\lambda,x_0):=\lambda C_2+(x_0,0); \quad \lambda>0, \quad x_0\in
\mathbb{R}$$  contained in $\mathbb{R}^2_+=\{(x,y)\in \mathbb{R}^2:
y>0\}.$

 \begin{figure}
    \begin{center}
\epsfig{file=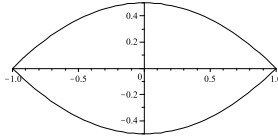, width=1.5in, height=1.5in}
\vspace{-0.6 in}
        \caption{The dual curve ${C}_1$} \label{dual}
    \end{center}
\end{figure}

\begin{figure}
    \begin{center}
\epsfig{file=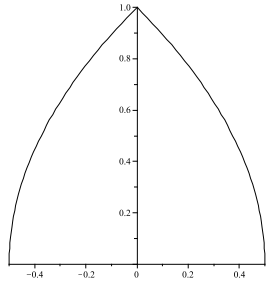, width=1.5in, height=1.5in}
        \caption{The standard geodesic ${C}_2$} \label{geo}
    \end{center}
\end{figure}

The dual curve $C_1$ to the boundary of the Stadium space  is
defined in polar coordinates, $[\tau, \phi]$ by the following
function $\tau_1(\phi).$

\begin{align*}
\tau_1(\phi) = \frac{1}{OE} = \frac{1}{1+ | \sin \phi |},
\end{align*}
where $E$ is the orthogonal projection of the point $O$ onto the
tangent line to the curve $C$ at a point $D\in C$ (cf. Figure
\ref{Stadium3}.)

 \begin{figure}
     \begin{center}
\epsfig{file=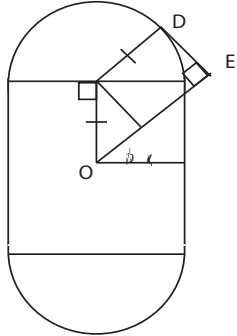, width=1.2 in, height=1.75in}
 \caption{Defining the dual curve} \label{Stadium3}
  \end{center}
\end{figure}

\noindent Rotating by $\frac{\pi}{2}$  we get the defining function
$\tau_2$ for $C_2$ as

\begin{align*}
\tau_2 = \tau_1\left(\phi-\frac{\pi}{2} \right) =  \frac{1}{1+ | \sin (\phi-\frac{\pi}{2}) | }
    = \frac{1}{1+ | \cos \phi | }.
\end{align*}

We shall omit calculations for the left part of the curve $C_2$,
using later the symmetry of curves $C$ and $C_2$ relative to
$y$-axis. Also we consider only the upper  halves of curves
$C_2(\lambda,x).$  So we get the equation
\begin{equation}
\label{right} \tau_2 = \frac{1}{1+ \cos \phi  }, \quad 0<\phi\leq
\pi/2.
\end{equation}
It is known that this is part of parabola. Setting $\phi=0$ and
$\phi=\pi/2,$ we see that the right side of $C_2$ has equation
$$x=\frac{1-y^2}{2}.$$ Hence, the right side of $\lambda C_2$ has
equation
$$x=\frac{\lambda^2-y^2}{2\lambda}.$$ So the entire curve $\lambda C_2$ is
\begin{equation}
\label{c2} x=\pm \frac{\lambda^2-y^2}{2\lambda} , \quad 0\leq
|y|\leq \lambda.
\end{equation}
Since the metric space, which we consider, is homogeneous, we can
study only the circles of this metric with the center at the point
$(0,1).$ On the curve $\lambda C_2$, if $y=1$ we get
$x_1=\frac{\lambda^2-1}{2\lambda}.$ So the shifted curve $\lambda
C_2 + (x_0(\lambda),0)$ passing through $(0,1)$ has shifting term
\begin{equation}
\label{xlambda}
x_0(\lambda)=-x_1=\frac{1}{2}\left(\frac{1}{\lambda}-\lambda\right)
\end{equation}
So we get the equation of the right side of $ \lambda C_2 +
x_0(\lambda)$ to be
\begin{equation}
\label{para} x=\frac{\lambda^2-y^2}{2\lambda}+x_0(\lambda)=
\frac{1-y^2}{2\lambda}, \quad  0<y\leq \lambda, \quad 1\leq \lambda.
\end{equation}

 \begin{figure} \label{geoshift}
    \begin{center}
\epsfig{file=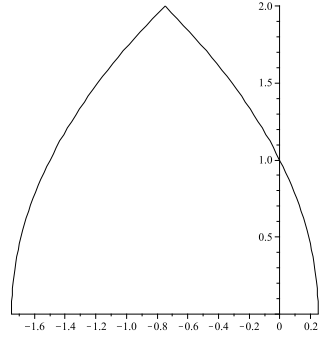, width=1.8in, height=1.8in}
        \caption{Shifted geodesic $\lambda C_2 + (x_0(\lambda),0).$}
    \end{center}
\end{figure}

\subsection{Distance formulas}

\noindent The tangent vector for the right side of $ \lambda C_2 +
x_0(\lambda)$ has direction  vector
$$\left \langle \frac{dx}{dy},1\right \rangle =
\left \langle -\frac{y}{\lambda},1\right \rangle, \quad\mbox{where}\quad 0<\frac{y}{\lambda}\leq 1.$$
Then the angle $\psi$ of this direction changes  between $\pi/2$ and
$3\pi/4.$ So we need to use only the second formula in
(\ref{length}). Here
$$\frac{1}{2|\sin \psi|}=\frac{\sqrt{1+(\frac{y}{\lambda})^2}}{2}.$$
The line element on $\mathbb{R}^2_+=\{(x,y)\in \mathbb{R}^2: y>0\}$
is $ds=\frac{1}{y}\|\cdot\|.$ Then the length of a geodesic between
two points $(x_1, y_1)$ and $(x_2,y_2)$ on the right side of $
\lambda C_2 + x_0(\lambda)$ where $0<y_1<y_2\leq \lambda$ is
$$l(y_1,y_2)=\int_{y_1}^{y_2}\frac{\sqrt{1+(\frac{y}{\lambda})^2}}{2}\frac{\sqrt{1+(\frac{y}{\lambda})^2}}{y}dy $$
$$=\frac{1}{2}\int_{y_1}^{y_2}\frac{1+(\frac{y}{\lambda})^2}{\frac{y}{\lambda}}d\left(\frac{y}{\lambda}\right) $$
$$=\frac{1}{2}\int_{\frac{y_1}{\lambda}}^{\frac{y_2}{\lambda}}\frac{1+z^2}{z}dz
=\frac{1}{2}\int_{\frac{y_1}{\lambda}}^{\frac{y_2}{\lambda}}(z+\frac{1}{z})dz
$$
$$=\frac{1}{2}\left(\ln z+\frac{z^2}{2}\right)\bigg|_{\frac{y_1}{\lambda}}^{\frac{y_2}{\lambda}}=
\frac{1}{2}\ln \frac{y_2}{y_1}+\frac{1}{4\lambda^2}(y_2^2-y_1^2).$$
In the subcases $y_2=1>y_1=y$ or $y=y_2>y_1=1$ we get respectively
\begin{equation}
\label{lessy} l(y)=\frac{1}{4}\left(\frac{1-y^2}{\lambda^2}-\ln
y^2\right) \quad\mbox{for}\quad y<1 \quad  \mbox{(on the right side
of the curve)}
\end{equation}
or
\begin{equation}
\label{morey} l(y)=\frac{1}{4}\left(\frac{y^2-1}{\lambda^2}+\ln
y^2\right) \quad\mbox{for}\quad y>1 \quad  \mbox{(on the right side
of the curve)}.
\end{equation}

\noindent Applying the last formula to the point of maximal height
$(x_0(\lambda),\lambda),$ where $\lambda>1,$ we get
$$l(\lambda)=\frac{1}{4}\left(\frac{\lambda^2-1}{\lambda^2}+\ln\lambda^2\right)=
\frac{1}{4}\left(1-\frac{1}{\lambda^2}+\ln\lambda^2\right),$$
so
\begin{equation}
\label{la}
l(\lambda)=\frac{1}{4}\left(1-\frac{1}{\lambda^2}+\ln\lambda^2\right).
\end{equation}

\subsection{Metric spheres}

Now we shall find a form of the sphere $S_K:=S((0,1),K)$ with radius
$K>0$ and   center $(0,1),$ using only the right side of curves
$\lambda C_2 + x_0(\lambda)$. Then we can apply the symmetry of the
geodesic relative to the line $x=x_0$.

We shall have three cases when the geodesic radii to the point on
$S_K$ is nonvertical  (cf. Figure \ref{georadii}).

 \begin{figure}
    \begin{center}
\epsfig{file=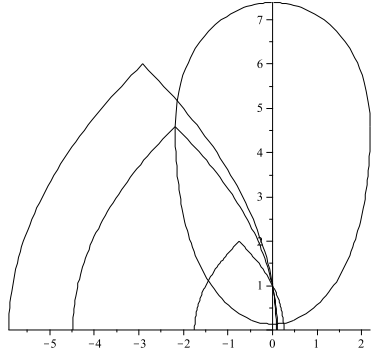, width=2.0in, height=2.0in}
        \caption{A unit sphere in the Stadium space with sample geodesic
        radii} \label{georadii}
    \end{center}
\end{figure}

1) If $(x,y)\in S_K,\ x>x_0,\ y<1,$ then we have by formula
(\ref{lessy}) that
$$K=l(y)=\frac{1}{4}\left(\frac{1-y^2}{\lambda^2}-\ln y^2\right).$$

2) If $(x,y)\in S_K,\  x \geq x_0, \ y>1$, then $K\leq l(\lambda)$ and
we have by formula (\ref{morey}) that
$$K=l(y)=\frac{1}{4}\left(\frac{y^2-1}{\lambda^2}+\ln y^2\right).$$

3) Consider now the case when $(x,y)\in S_K,\ x<x_0.$  Note that in
this case $K>l(\lambda).$ Using the symmetry of the geodesic with
respect to the line $x=x_0$ we have
\begin{align*}
K= l(\lambda) + d((x_0(\lambda),\lambda),(x,y)) =  \frac{1}{4}\left(1-\frac{1}{\lambda^2}+\ln\lambda^2\right) +  \frac{1}{2}\ln\frac{\lambda}{y} +\frac{\lambda^2-y^2}{4\lambda^2},
\end{align*}
which is equivalent to the equation
\begin{equation}
\label{middle}
\frac{y^2+1}{4\lambda^2}+\frac{1}{2}\ln y-\ln \lambda=\frac{1}{2}-K.
\end{equation}
In this case

\begin{align*}
x=- \frac{\lambda^2-y^2}{2\lambda} + x_0(\lambda)= - \frac{\lambda^2-y^2}{2\lambda} +
\frac{1}{2}\left(\frac{1}{\lambda}-\lambda\right) = -\lambda+\frac{y^2+1}{2\lambda}.
\end{align*}

\noindent Hence
\begin{equation}
\label{xafter} x=-\lambda+\frac{y^2+1}{2\lambda}.
\end{equation}

In the case of a vertical geodesic radii, the points on $S_K$ are
easily evaluated from solving the integral equation

$$K=\left | \int_{1}^{y_0}\frac{1}{2y} dy \right | $$
for $y_0$.  It follows that the boundary points along $x=0$ are
$(0,e^{\pm{2K}})$. Note that we can also get this by taking limits
as $\lambda\rightarrow +\infty$ in formulas (\ref{lessy}) and
(\ref{morey})  where $l(y)=K$.

\subsection{Tangents to spheres}

Next we shall find tangents to the sphere $S_K,$ using the 
equations above
and considering only its right part. This part in turn,
consists of 3 curves: 
the bottom right curve $B_1,$  
the side right curve $B_2,$ and 
the top right curve $B_3$
(cf. Figure \ref{unitball}). The
joint point of curves $B_1$ and $B_2$ is defined by the
equality
$\lambda=1,$ while the joint point of curves $B_2$ and $B_3$ is
defined by the
equality $y_0=\lambda_0,$ where $l(\lambda_0)=K$ (cf.
equation (\ref{la})).

 \begin{figure}
    \begin{center}
\epsfig{file=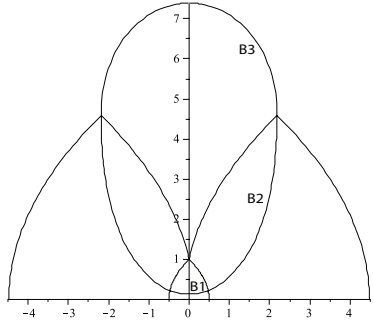, width=2.0in, height=2.0in}
 \caption{The boundary curves of a unit ball} \label{unitball}
    \end{center}
\end{figure}

Equations of the bottom right curve $B_1$ are

$$x=\frac{1-y^2}{2\lambda},$$

$$\frac{1-y^2}{4\lambda^2}-\frac 12 \ln y =K.$$

\noindent Differentiating these equations and using once more the
first one of them, we get the following system

$$2\lambda y \frac{dy}{dx} + (1-y^2) \frac{d \lambda}{dx} =
-2\lambda^2,$$

$$\lambda (y^2 + \lambda^2) \frac{dy}{dx} + y (1-y^2) \frac{d
\lambda}{dx} = 0.$$

\noindent Solving for $\frac{dy}{dx}$ and $\frac{d\lambda}{dx}$ we get
by the
Cramer rule 

\begin{equation}
\label{drb}
\frac{dy}{dx}=\frac{2y\lambda}{\lambda^2-y^2}>0,\quad
\frac{d\lambda}{dx}=\frac{2\lambda^2(y^2+\lambda^2)}{(1-y^2)(y^2-\lambda^2)}<0.
\end{equation}

\indent In the
other two cases we need to multiply the
equations
(\ref{xafter}) and (\ref{para}) by $-1.$
Equations of the side right curve $B_2$ are

$$x=\lambda -\frac{1+y^2}{2\lambda},$$

$$\frac{y^2+1}{4\lambda^2}+\frac 12 \ln y-\ln \lambda =\frac 12 -K.$$

\noindent We differentiate these equations to get

$$ -2\lambda y \frac{dy}{dx} + (2\lambda^2 + 1 +  y^2)
\frac{d\lambda}{dx} = 2\lambda^2,$$

$$\lambda (y^2 + \lambda^2) \frac{dy}{dx} - y(  2\lambda^2 + 1 + y^2)
 \frac{d \lambda}{dx} = 0.$$

\noindent Solving for $\frac{dy}{dx}$ and $\frac{d\lambda}{dx},$
using the Cramer rule, we get the following

\begin{equation}
\label{drs}
\frac{dy}{dx}=\frac{2y\lambda}{\lambda^2-y^2}>0, \quad
\frac{d\lambda}{dx}=\frac{2\lambda^2(y^2+\lambda^2)}{(2\lambda^2+1+y^2)(\lambda^2-y^2)}>0.
\end{equation}

Equations of the top right curve $B_3$ are

$$x=\frac{y^2-1}{2\lambda},$$

$$\frac{y^2-1}{4\lambda^2}+\frac 12 \ln {y} =K$$

\noindent By differentiating these equations we get

$$2\lambda y \frac{dy}{dx} + (1-y^2) \frac{d \lambda}{dx} =
2\lambda^2,$$

$$\lambda (y^2 + \lambda^2) \frac{dy}{dx} + y (1-y^2) \frac{d
\lambda}{dx} = 0.$$

\noindent Solving for $\frac{dy}{dx}$ and $\frac{d\lambda}{dx}$
using the Cramer rule, we get the following

\begin{equation}
\label{drt}
\frac{dy}{dx}=\frac{2y\lambda}{y^2-\lambda^2}<0, \quad
\frac{d\lambda}{dx}=\frac{2\lambda^2(y^2+\lambda^2)}{(1-y^2)(\lambda^2-y^2)}<0.
\end{equation}

We see from equations (\ref{drb}) and (\ref{drs}) that
$\frac{dy}{dx}$  is continuous on $B_1 \cup B_2$ 
except at the top
point of $B_2,$ where $B_2$ meets $B_3$ and $y=\lambda,$ so the
slopes of both curves $B_2$ and $B_3$ approach infinity and have
vertical tangents at their joint point. Also $\frac{dy}{dx} \to 0$
as $\lambda \to \pm \infty$ (or $x \to 0$). All these assertions
imply that the curve $S_K$ is smooth.

\subsection{Convexity properties}

To see that the ball $B((0,1),K)$ is not convex, it is enough to
take a geodesic $\lambda C_2\cap \mathbb{R}^2_+$ with a number
$\lambda,$ which is a little more than $e^{2K}.$

Using equations (\ref{drb}), (\ref{drs}), and (\ref{drt}), we can
show after a little tedious calculations that $\frac{d^2y}{dx^2}>0$
at interior points of $B_1$ and $B_2$ and $\frac{d^2y}{dx^2}<0$ at
interior points of $B_3.$ This implies that the curve $S_K$ is
strongly convex in affine sense.

\subsection{Uniform local G-homogeneity}  In order to get this
result, we look carefully at the geometry of the Stadium space.

Right-sided tangent vectors to every geodesic may have only
directions with angles in intervals $(\frac{\pi}{4},\frac{\pi}{2})$
or $( -\frac{\pi}{2},-\frac{\pi}{4} )$, with directions
$\pm\frac{\pi}{4}$ only at its top point, where $y=\lambda.$ This
implies that a geodesic with origin inside $B((0,1),K)$ can
intersect $B_1\cup B_2$ at most once. Thus geodesics with origin
inside $B((0,1),K)$ and parameters $\lambda, 1\leq \lambda \leq \lambda_0$ can
intersect the right side of $S_K$ at most once.

Further we shall consider without any mention only geodesics which
intersect the set $U ((0,1),K)\cap \{(x,y)\in \mathbb{R}^2| y=1.\}$
It is clear that the width of $S_K$ is equal to
$2|x(\lambda_0)|=\lambda_0-\frac{1}{\lambda_0}.$ Now one can easily
see that
$$2|x(\lambda)|=\lambda-\frac{1}{\lambda}\geq 2(\lambda_0-\frac{1}{\lambda_0})\quad\mbox{if}\quad \lambda\geq 2\lambda_0$$
and a geodesic with parameter $\lambda\geq 2\lambda_0$ can intersect
the right side of $S_K$ at most once. So we need to consider only
geodesics  with parameters $\lambda, \lambda_0<\lambda<2\lambda_0.$
It follows from equation (\ref{c2}) that right-side derivatives on
any geodesic with such parameter at any point $(x,y),$ where
$y_0=\lambda_0\leq y\leq \lambda,$ satisfy condition
$$\frac{dy}{dx}=\pm\frac{\lambda}{y}\geq -\frac{\lambda}{y}> -\frac{2\lambda_0}{\lambda_0}=-2.$$
This implies that geodesics with such parameters can intersect the
right side of $S_K$ at  least twice only at points with $y>y_1,$
where $\frac{dy}{dx}(y_1)=-2$ for derivative along $S_K.$
We can deduce from equations (\ref{drt})
that $y_1=\frac{\sqrt{5}-1}{2}\lambda_1,$ where
$$\frac{y_1^2-1}{4\lambda_1^2}+\frac 12 \ln {y_1} =K.$$
So this is possible only if $\lambda_1<\lambda< 2\lambda_0,$ where $\lambda_1 >\lambda_0.$
The top point of every geodesic with such parameter $\lambda,$ going through
$(0,1)$ and  intersecting $B_3,$ is $(|x(\lambda)|,\lambda)$ and
$|x(\lambda)|>|x(\lambda_1)|,$ while the most right point of $S_K$
is $(|x(\lambda_0)|,\lambda_0)$. Thus the shift of this geodesic to
the left of size less than
$$\nu=|x(\lambda_1)|-|x(\lambda_0)|=\frac{1}{2}\left(\lambda_1-\lambda_0+\frac{1}{\lambda_0}-\frac{1}{\lambda_1}\right)$$
will have the top point to the right of $S_K.$

Let $\eta=\max\{x|(x,1)\in B((0,1),K)\}$ and $\xi=\min(\nu,\eta).$
Using previous considerations, one can check that the set $P=D\cap
U((0,1),K),$ where $D$ is the set bounded above by curves
$$\lambda_0C_2+(-(x(\lambda_0)+\xi),0),\quad \lambda_0C_2+(x(\lambda_0)+\xi,0)$$
and below by curves
$$\lambda_0C_2+(x(\lambda_0)-\xi,0),\quad \lambda_0C_2+(\xi-x(\lambda_0),0),$$
(cf. formula (\ref{xlambda})) has the following properties:

1) $(0,1)\in \text{int}(P);$

2) For every point $Y\in \text{int}(P),$
every geodesic with parameter $\lambda\geq \lambda_0$
going through $Y$ intersects
the set $\{(x,1)|x\in (-\xi,\xi)\};$
and

3) For every point $Y\in \text{int}(P)$ every geodesic, going
through $Y,$  intersects the shere $S_K$ exactly at two (mutually
antipodal) points.

Since numbers $\xi$ and $\lambda_0$ continuously depend on $K,$ and
the  Stadium space is (metrically) homogeneous, we get the following
theorem.

\begin{thm}
\label{ust} The Stadium space $X$ is uniformly locally
$G$-homogeneous on $X.$  As a corollary, it is locally
$G$-homogeneous and uniformly locally $G$-homogeneous on an orbal
subset. On the other hand, $X$ has no convex ball of positive
radius.
\end{thm}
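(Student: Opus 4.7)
The plan is to extract the uniform $G$-homogeneity of a single ``reference'' sphere $S_K=S((0,1),K)$ from the geometric analysis carried out in the preceding subsections, and then to transfer everything to all of $X$ via the transitive isometric action of $\Gamma$.

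First I would invoke the set $P$ constructed just above the theorem together with its three listed properties. Property (3) says exactly that every geodesic through any $Y\in\mathrm{int}(P)$ meets $S_K$ in two antipodal points, which is the same as saying that $B((0,1),K)$ is starlike with respect to every such $Y$. Because $(0,1)\in\mathrm{int}(P)$, I can pick $\delta(K)>0$ with $B((0,1),\delta(K))\subset\mathrm{int}(P)$, so that $B((0,1),K)$ is stably starlike at $(0,1)$ in the sense of Definition \ref{stably starlike}.

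Next, since the metric is $\Gamma$-invariant and $\Gamma$ acts transitively on $X$, the same conclusion, with the same $\delta(K)$, holds verbatim at every $c\in X$: for each $K>0$ and each $c\in X$, the closed ball $B(c,K)$ is starlike with respect to every point of $U(c,\delta(K))$. To upgrade this to the uniform local $G$-homogeneity demanded by Definition \ref{unif}, I would fix an arbitrary pair $0<\varepsilon_1<\varepsilon_2$, observe that $\xi(K)$ and $\lambda_0(K)$ depend continuously on $K$ (so $\delta(K)$ can also be chosen continuous), and then set $\delta:=\min_{K\in[\varepsilon_1,\varepsilon_2]}\delta(K)>0$ by compactness of the interval. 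This produces the required data with $C=X$; the orbal subset condition is automatic, since $X$ contains every ball $B(c_0,r)$ with $r>\varepsilon_2$, and local $G$-homogeneity follows as an immediate consequence of the uniform version.

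Finally, the nonexistence of convex balls of positive radius is a one-line appeal to Theorem \ref{nonconvex}: the flat top and bottom edges of the Stadium show that the two tangent lines to its boundary $C$ at the points where $C$ meets the $x$-axis share nondegenerate segments with $C$, so the hypothesis of Theorem \ref{nonconvex} is fulfilled. The main obstacle in the overall argument is not in the packaging above but in the prior construction of $P$ and the verification of its property (3); that, in turn, rests on the slope bound $|dy/dx|<2$ along geodesics with $\lambda_0<\lambda<2\lambda_0$ and on the identification of the critical height $y_1$ as $\tfrac{\sqrt{5}-1}{2}\lambda_1$ coming from the $B_3$ equations, both of which have already been carried out in the previous subsections.
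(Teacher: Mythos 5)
Your proposal follows essentially the same route as the paper: the paper's proof of Theorem~\ref{ust} is exactly the packaging you describe, namely that property (3) of the set $P$ gives stable starlikeness of $B((0,1),K)$ at $(0,1)$, that the continuity of $\xi$ and $\lambda_0$ in $K$ handles the range of radii, that metric homogeneity of the Stadium space transfers everything to all of $X$, and that nonconvexity comes from Theorem~\ref{nonconvex}. One small correction to your last step: the flat portions of the Stadium's boundary are the left and right vertical sides $x=\pm 1$, $|y|\le 1$ (the top and bottom are the semicircular arcs), and it is these vertical sides that the $x$-axis meets at $(\pm 1,0)$, so the tangent lines there contain nondegenerate segments of $C$; your conclusion is correct, but your stated reason (``flat top and bottom edges'') describes the wrong parts of the curve and, taken literally, would not verify the hypothesis of Theorem~\ref{nonconvex}.
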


\section{Epilogue}

We begin by the following remark: All
statements of Sections 2--7, except for the
local uniqueness of joins and Proposition~3.3, 
can be generalized to spaces with 
distinguished geodesics (in the sense of [20]). 

In conclusion, let us note that the 
Busemann conjecture remains an important problem in the
characterization of manifolds. Proposition \ref{cone} and Corolary
\ref{homo} imply that it is equivalent to the statement that
sufficiently small metric spheres in a finite-dimensional Busemann
$G$-space are codimension one manifold  factors. We conclude the
paper by some questions.

\begin{que}
Is every Busemann $G$-space $X$ necessarily locally $G$-homogeneous
or uniformly locally $G$-homogeneous on an orbal subset?
\end{que}

\begin{que}
Is every sufficiently small sphere in $n$-dimensional Busemann
$G$-space  homotopy equivalent to the  $(n-1)$-sphere?
\end{que}

\begin{que}
Are there finite-dimensional locally $G$-homogeneous Busemann
$G$-spaces  with nonmanifold arbitrary small metric spheres?
\end{que}

A positive answer to the last question would provide an example of a
compact topologically homogeneous finite-dimensional nonmanifold ANR
which is a homology sphere having the property that the complement
of every one of its points is contractible.

\section*{Acknowledgements}

The authors thank I.A. Zubareva for useful discussions.
This research was supported by the State Maintenance Program
for the Leading Scientific Schools of the Russian Federation
(grant NSH-6613.2010.1), RFBR (grant 08-01-00067-a),
RFBR-BRFBR (grant 10-01-90000-Bel-a), the project
"Quasiconformal Analysis and geometric aspects of operator
theory",
the Brigham Young University
Special Topology Year 2008-2009 Fund and
the Slovenian Research Agency grants
BI-US/08-10/003,
P1-0292-0101, and
J1-2057-0101.
We acknowledge the referee for several comments and suggestions.


\begin{thebibliography}{9999}

\bibitem{Alexandroff}
P.~S.~Alexandroff,
{\it Introduction to Homological
Dimension Theory and General Combinatorial Topology},
(in Russian), "Nauka", Moscow, 1975.

\bibitem{ABN}
A.~D.~Aleksandrov, V.~N.~Berestovski\u\i, I.~G.~Nikolaev,
{\it Generalized Riemannian spaces},
Russian Math. Surv. {\bf 41}:3 (1986), 1--54.

\bibitem{Berestovskii1}
V.~N.~Berestovski\u\i,
{\it On the problem of the finite-dimensionality of a Busemann $G$-space}, (in Russian),
Sibir. Mat. \v Z. {\bf 18}:1 (1977),  219--221; English transl. in Siber. Math. J. {\bf 18}:1 (1977), 159--161.

\bibitem{Berestovskii2}
V.~N.~Berestovski\u\i,
{\it Manifolds with an intrinsic metric with one-sided bounded curvature in a sense of A.D. Aleksandrov,} (in Russian),
Mat. Fiz. Anal. Geom. {\bf 1}:1 (1994), 41--59.

\bibitem{Berestovskii3}
V.~N.~Berestovski\u\i,
{\it Busemann spaces with upper-bounded Aleksandrov curvature,} (in Russian),
Algebra i Analiz {\bf 14}:5 (2002), 3--18;
English transl. in St. Petersburg Math. J. {\bf 14}:5 (2003), 713--723.

\bibitem{Berestovskii4}
V.~N.~Berestovski\u\i,
{\it Homogeneous Busemann $G$-spaces}, (in Russian),
Sibir. Mat. Zh. {\bf 23}:2 (1982), 3--15.

\bibitem{Berestovskii5}
V.~N.~Berestovski\u\i,
{\it Homogeneous manifolds with intrinsic metric. I}, (in Russian),
Sibir. Mat. Zh. {\bf 29}:6 (1988), 17--29;
English transl. in Siber. Math. J. {\bf 29}:6 (1988), 887--897(1989).

\bibitem{Berestovskii6}
V.~N.~Berestovski\u\i,
{\it Geodesics of nonholonomic left-invariant inner metrics on the Heisenberg group and isoperimetrics of Minkowski plane,} (in Russian),
Sibir. Mat. Zh. {\bf 35}:1 (1994), 3--11;
English transl. in Siber. Math. J. {\bf 35}:1 (1994), 1--8.

\bibitem{Bing-Borsuk}
R. H. Bing, K. Borsuk,
{\it Some remarks concerning topologically homogeneous spaces,}
Ann. of Math. (2) {\bf 81} (2) (1965), 100--111.

\bibitem{Brouwer1}
L. E. J. Brouwer,
{\it Beweis der Invarianz der Dimensionenzahl,}
Math. Annalen {\bf 70}:2 (1911), 161--165.

\bibitem{Brouwer2}
L. E. J. Brouwer,
{\it Zur Invarianz des $n$-dimensionalen Gebiets,}
Math. Annalen {\bf 72}:2 (1912), 55--56.

\bibitem{Busemann1}
H. Busemann,
{\it Metric Methods in Finsler Spaces and in the Foundations of Geometry,}
Ann. Math. Study No. {\bf 8}, Princeton University Press, Princeton, 1942.

\bibitem{Busemann2}
H. Busemann,
{\it On spaces in which two points determine a geodesic,}
Trans. Amer. Math. Soc. {\bf 54 } (1943), 171--184.

\bibitem{Busemann3}
H. Busemann,
{\it The Geometry of Geodesics}, Academic Press, New York, 1955.

\bibitem{Busemann4}
H. Busemann,
{\it Spaces with non-positive curvature,}
Acta Math. {\bf 80} (1948), 259--310.

\bibitem{Busemann5}
H. Busemann,
{\it Quasihyperbolic geometry,}
Rend. Circ. Mat. Palermo {\bf 2}:4 (1955), 256--267.

\bibitem{Busemann6}
H. Busemann,
{\it Recent synthetic differential geometry,}
Ergebnisse der Mathematik and ihrer Grenzgebiete, Band 54. Springer-Verlag, New York -- Berlin, 1970.

\bibitem{Busemann-Mayer}
H. Busemann, W. Mayer,
{\it On the foundations of calculus of variations,}
Trans. Amer. Math. Soc. {\bf 49} (1941), 173--198.

\bibitem{Busemann-Phadke1}
H. Busemann, B. B. Phadke,
{\it Nonconvex spheres in $G$-spaces,}
J. Indian Math.Soc. {\bf 44} (1980), 39--50.

\bibitem{Busemann-Phadke2}
H. Busemann, B. B. Phadke,
{\it Spaces with distinguished geodesics,}
Monographs and Textbooks in Pure and Applied Math. Vol. 108. Marcel Dekker,
Inc., New York, 1987.

\bibitem{Busemann-Phadke3}
H. Busemann, B. B. Phadke,
{\it Novel results in the geometry of geodesics,}
Adv. Math. {\bf 101}:2 (1993), 180--219.

\bibitem{Dydak-Walsh}
J.~Dydak, J. J.~Walsh,
{\it Sheaves that are locally constant with applications to homology manifolds,}
In ''Geometric topology and Shape theory'' (S.Mardesic and J.Segal, eds.),
Lecture Notes in Math., No. 1283, 65--87, Springer Verlag, Berlin, New York, 1987.

\bibitem{Eilenberg-Steenrod}
S.~Eilenberg, N.~Steenrod,
{\it Foundations of Algebraic Topology,}
Princeton University Press, Princeton, New Jersey, 1952.

\bibitem{Finsler}
P. Finsler,
{\it \"{U}ber Kurven und Fl\"{a}chen in allgemeinen R\"{a}umen},
Ph. D. Thesis, G\"{o}ttingen, 1918.

\bibitem{Gribanova}
I.~A.~Gribanova,
{\it The quasihyperbolic plane,} (in Russian),
Sibirsk. Mat. Zh. {\bf 40}:2 (1999), 288-301;
English transl. in Siberian Math. J. {\bf 40}:2 (1999), 245--257.

\bibitem{Halverson1}
D. M. Halverson,
{\it Detecting codimension one manifold factors with the disjoint homotopies property,}
Topology Appl. {\bf 117}:3 (2002),  231--258.

\bibitem{Halverson2}
D. M. Halverson,
{\it $2$-ghastly spaces with the disjoint homotopies property: the method of fractured maps,}
Topology Appl. {\bf 138}:1-3 (2004), 277--286.

\bibitem{Halverson3}
D. M. Halverson,
{\it Detecting codimension one manifold factors with 0-stitched disks,}
Topology Appl. {\bf 154}:9 (2007), 1993--1998.

\bibitem{HaRSurvey}
D. M. Halverson, D. Repov\v s,
{\it The Bing-Borsuk and the Busemann Conjectures},
Math. Comm. {\bf 13}:2 (2008), 163--184.

\bibitem{HaR}
D. M. Halverson, D. Repov\v s,
{\it Detecting codimension one manifold factors with topographical techniques}, in preparation.

\bibitem{Kosinski}
A.~Kosi\'nski,
{\it On manifolds and $r$-spaces,}
Fund. Math. {\bf 42} (1955), 111--124.

\bibitem{Kuratowski}
K. Kuratowski,
{\it Topology. vol. II},
Academic Press, New York-London; Pa\'nstwowe Wydawnictwo Naukowe, Warsaw, 1968.

\bibitem{Krakus}
B. Krakus,
{\it Any $3$-dimensional $G$-space is a manifold,}
Bull. Acad. Pol. Sci. {\bf 16} (1968), 737--740.

\bibitem{Lee}
C. N. Lee,
{\it Kosi\'nski $r$-spaces and homology manifolds,}
Michigan Math. J. {\bf 10} (1963), 289--293.

\bibitem{Papadopoulos}
A. Papadopoulos,
{\it Metric Spaces, Convexity and Nonpositive Curvature},
IRMA Lect. Math. and Theor. Phys. {\bf 6},
European Math. Soc., Z\"{u}rich, 2005.

\bibitem{Pogorelov}
A. V. Pogorelov,
{\it Busemann Regular G-Spaces},
Harwood Academic Press, Amsterdam, 1998.

\bibitem{Szenthe}
J.~Szenthe,
{\it On the topological characterization of transitive Lie group actions},
Acta. Sci. Math. (Szeged) {\bf 36} (1974), 323--344.

\bibitem{Thesis}
P.~Thurston,
{\it The topology of $4$-dimensional $G$-spaces and a study of $4$-manifolds of non-positive curvature,}
Ph. D. Thesis, Univ. of Tennessee, Knoxville, 1993.

\bibitem{Thurston}
P.~Thurston,
{\it $4$-dimensional Busemann $G$-spaces are $4$-manifolds,}
Diff. Geom. Appl. {\bf 6}:3 (1996), 245--270.

\bibitem{Vaisala}
J.~V\"ais\"al\"a,
{\it The invariance domain under acyclic mappings,}
Duke Math. J. {\bf 33}:4 (1966), 679--681.

\bibitem{Wilder}
R.~L.~Wilder,
{\it Monotone mappings of manifolds, II,}
Mich. Math. J. {\bf 5} (1958), 19--23.

\end{thebibliography}
\end{document}